\numberwithin{equation}{section}
\theoremstyle{break}
\newtheorem{Def}{Definition}[section]
\newtheorem{Thm}{Theorem}[section]
\newtheorem{Prop}{Proposition}[section]
\newtheorem{Lem}{Lemma}[section]
\newtheorem{Rem}{Remark}[section]
\newcommand{\fb}[1]{{\bf #1}}
\title[$\mathscr{R}$-boundedness for Stokes resolvent problem]
{A solution formula and the $\mathscr{R}$-boundedness for the generalized Stokes resolvent problem 
in an infinite layer with Neumann boundary condition}
\author{Kenta Oishi \\ Nagoya University}
\address{ 
Kenta Oishi \endgraf 
Graduate School of Mathematics \endgraf 
Nagoya University \endgraf 
Furocho, Chikusaku \endgraf 
Nagoya 464-8602 \endgraf 
Japan} 
\email{m16011b@math.nagoya-u.ac.jp}
\date{}
\keywords{free boundary problem, Stokes resolvent problem, infinite layer, $\mathscr{R}$-boundedness, maximal regularity}
\begin{document}
\maketitle

\begin{abstract}
We consider the generalized Stokes resolvent problem in an infinite layer with Neumann boundary conditions. 
This problem arises from a free boundary problem 
describing the motion of incompressible viscous one-phase fluid flow without surface tension
in an infinite layer bounded both from above and from below by free surfaces. 
We derive a new exact solution formula to the generalized Stokes resolvent problem 
and prove the $\mathscr{R}$-boundedness of the solution operator families 
with resolvent parameter $\lambda$ varying in a sector $\Sigma_{\varepsilon,\gamma_0}$ 
for any $\gamma_0>0$ and $0<\varepsilon<\pi/2$, 
where $\Sigma_{\varepsilon,\gamma_0}
=\{ \lambda\in\mathbb{C}\setminus\{0\} \mid |\arg\lambda|\leq\pi-\varepsilon, \ |\lambda|>\gamma_0 \}$. 
As applications, 
we obtain the maximal $L_p$-$L_q$ regularity for the nonstationary Stokes problem
and then establish the well-posedness locally in time of the nonlinear free boundary problem mentioned above 
in $L_p$-$L_q$ setting. 
We make full use of the solution formula to take $\gamma_0>0$ arbitrarily, 
while in general domains we only know the $\mathscr{R}$-boundedness for $\gamma_0\gg1$ from the result by Shibata. 
As compared with the case of Neumann-Dirichlet boundary condition studied by Saito, 
analysis is even harder on account of higher singularity of the symbols in the solution formula. 
\end{abstract}


\section{Introduction} \label{sec:intro}

We consider the generalized Stokes resolvent problem and the nonstationary Stokes problem in an infinite layer $\Omega$ 
with Neumann boundary conditions: 
\begin{gather} \label{SR} 
\left\{
\begin{array}{ll}
\lambda \fb{u}- \text{Div}_{}\,{\fb{S}}(\fb{u},\theta) = \fb{f}, \quad \text{div}_{}\,{\fb{u}}= g & \text{ in } \Omega, \\
\fb{S}(\fb{u},\theta)\nu= \fb{h}& \text{ on } \partial\Omega,
\end{array} 
\right. 
\\ \label{S} 
\left\{
\begin{array}{ll}
\partial_t{\fb{U}}- \text{Div}_{}\,{\fb{S}}(\fb{U},\Theta) = \fb{F}, \quad \text{div}_{}\,{\fb{U}}=G & \text{in } \Omega\times(0,\infty), \\
\fb{S}(\fb{U},\Theta)\nu= \fb{H}& \text{on } \partial\Omega\times(0,\infty), \\
\fb{U}|_{t=0}=0 & \text{in } \Omega, 
\end{array}
\right. 
\end{gather}
where the domain $\Omega$ is given by  
\begin{align*}
\Omega = \{ x = (x',x_N)^\mathsf{T} \in \mathbb{R}^N \mid x' = (x_1,\cdots,x_{N-1})^\mathsf{T} \in \mathbb{R}^{N-1}, \ 0<x_N<\delta \} \quad (\delta>0) 
\end{align*}
and $N\geq 2$. 
Here, by $\fb{u}=(u_1(x),\cdots,u_N(x))^\mathsf{T}$ and $\theta=\theta(x)$, 
we denote respectively unknown $N$-component velocity vector and scalar pressure, 
while vector fields $\fb{f}=(f_1(x),\cdots,f_N(x))^\mathsf{T}$, $\fb{h}=(h_1(x),\cdots,h_N(x))^\mathsf{T}$ 
and scalar function $g=g(x)$ are prescribed. 
Concerning $\fb{U}=\fb{U}(x,t)$, $\Theta=\Theta(x,t)$, $\fb{F}=\fb{F}(x,t)$, $G=G(x,t)$ and $\fb{H}=\fb{H}(x,t)$, 
it would be obvious what they are. 
The stress tensor $\fb{S}(\fb{u},\theta)$ is given by $\mu \fb{D}(\fb{u}) - \theta{\fb{I}}$, 
where $\mu$ is a positive constant which denotes the viscosity coefficient, 
$\fb{I}$ is the $N\times N$ identity matrix, 
and $\fb{D}(\fb{u})$ is the doubled deformation tensor 
whose $(j,k)$ component is $\fb{D}_{jk}(\fb{u})=\partial_ku_j+\partial_ju_k$ with $\partial_j=\partial/\partial x_j$. 
We denote by $\nu= (\nu_1(x),\cdots,\nu_N(x))^\mathsf{T}$ the unit outer normal vector to $\partial\Omega$. 
Finally, we set $\text{div}_{}\,{\fb{u}}= \sum_{j=1}^N \partial_ju_j$ and, 
given matrix field $\fb{M}$ with $(j,k)$ component $M_{jk}$, $\text{Div}_{}\,{\fb{M}}$ is defined by the $N$-component vector
whose $j$-th component is $\sum_{k=1}^N \partial_k M_{jk}$. 
In this paper, 
we derive a new exact solution formula to the generalized Stokes resolvent problem 
and prove the $\mathscr{R}$-boundedness of the solution operator families. 
As applications, we obtain the maximal $L_p$-$L_q$ regularity for \eqref{S} 
and then establish the well-posedness locally in time 
for the free boundary problem \eqref{NS} below for the Navier-Stokes equations. 

Problems \eqref{SR} and \eqref{S} in the layer have been studied in the case of other boundary conditions. 
For the problem in which the boundary condition on the lower boundary is replaced by the Dirichlet one  
\begin{align}
\fb{u}=0 \text{ on } \Gamma_0 = \{ x=(x',x_N) \in \mathbb{R}^N \mid x_N=0 \}, 
\end{align}
Abe \cite{Abe04MMAS} provided a solution formula of \eqref{SR} 
and proved the resolvent estimate for $\lambda \in \Sigma_{\varepsilon,\gamma_0}$ 
with any $\gamma_0>0$ and $0<\varepsilon<\pi/2$, where
\begin{align} \label{eq:resoset}
\Sigma_{\varepsilon,\gamma_0}= \{ \lambda \in \mathbb{C}\setminus\{0\} \mid |\arg\lambda| \leq \pi-\varepsilon, \ |\lambda| > \gamma_0 \}. 
\end{align}
Abels showed the resolvent estimate for $\lambda \in \Sigma_{\varepsilon,0} \cup \{0\}$ with any $0<\varepsilon<\pi/2$ in \cite{Abel06MN} 
and obtained the same result for asymptotically flat layers in \cite{Abel05MFM1}. 
Moreover, he showed that the Stokes operator admits a bounded $H_\infty$-calculus in \cite{Abel05MFM2} 
and proved the maximal regularity for $\fb{f}\in L_q(0,\infty;L_q(\Omega))$ with $3/2<q<\infty$ 
in \cite{Abel05ADE}. 
In \cite{Sai15MMAS}, Saito provided a new solution formula to \eqref{SR} 
subject to Neumann-Dirichlet boundary condition mentioned above
and established the $\mathscr{R}$-boundedness of the solution operator families 
with resolvent parameter $\lambda\in{\Sigma_{\varepsilon,\gamma_0}}$ for any $\gamma_0>0$ and $0<\varepsilon<\pi/2$. 
He obtained the maximal $L_p$-$L_q$ regularity for $1<p,q<\infty$ as a corollary. 
In the case of Dirichlet boundary conditions on both upper and lower boundaries, 
Abe and Shibata \cite{AbeShiba03MSJ,AbeShiba03MFM}, Abels and Wiegner \cite{AbelWie05DIE}, 
Abe and Yamazaki \cite{AbeYama10MFM} and von Below and Bolkart \cite{BelBol17MN} 
derived solution formulas and obtained the resolvent estimates. 
In addition, in \cite{Abel02EE}, 
Abels proved the existence of bounded imaginary powers of the Stokes operator 
and, as a consequence, the maximal $L_p$-$L_q$ regularity. 
However, analysis of \eqref{SR} and \eqref{S} with Neumann boundary conditions on both sides of the boundary $\partial\Omega$ 
is less developed. 
We do not know any solution formula of the generalized Stokes resolvent problem \eqref{SR}.  

On the other hand, in general domains $\Omega$ subject to Dirichlet boundary condition on $\Gamma_b\subset{\partial}\Omega$ 
and Neumann boundary condition on $\Gamma=\partial\Omega\setminus\Gamma_b$, 
Shibata \cite{Shiba13MFM} showed the resolvent estimate under the assumption: 
the unique existence of solution $\theta\in \mathcal{W}^1_q(\Omega)$ to the weak Dirichlet-Neumann problem 
\begin{align} \label{WDNP}
(\nabla{\theta},\nabla\varphi)_\Omega = (\fb{f}_0,\nabla\varphi)_\Omega \text{ for any } \varphi \in \mathcal{W}^1_q(\Omega)
\end{align}
for any $\fb{f}_0\in{L_q(\Omega)}^N$. 
Here, $\mathcal{W}^1_q(\Omega)$ is any closed subspace of $\widehat{W}^1_{q,\Gamma}(\Omega)$ containing $W^1_{q,\Gamma}(\Omega)$, 
where $\widehat{W}^1_{q,\Gamma}(\Omega)=\{ \theta\in{L_{q,\text{loc}}(\overline{\Omega})} \mid \nabla{\theta}\in{L_q(\Omega)}^N, \ \theta|_\Gamma=0 \}$
and $W^1_{q,\Gamma}(\Omega)=\{ \theta\in{W^1_q(\Omega)} \mid \theta|_\Gamma=0 \}$. 
In \cite{Shiba14DIE}, this result was further developed by showing the $\mathscr{R}$-boundedness and the maximal $L_p$-$L_q$ regularity. 
As for the case of Neumann boundary conditions on both sides of the boundary $\partial\Omega$ of the layer $\Omega$, 
the unique solvability of \eqref{WDNP} with $\mathcal{W}^1_q(\Omega)=W^1_{q,0}(\Omega)$ 
was proved by Simader and Ziegler \cite{SimZie98QM}. 
As a consequence, the $\mathscr{R}$-boundedness of solution operator families with $\lambda\in{\Sigma_{\varepsilon,\gamma_0}}$ of \eqref{SR} 
is available provided $\gamma_0>0$ is large enough, 
however, it is better to develop the theory with $\gamma_0>0$ arbitrarily small for the layer  
without relying on the framework of \cite{Shiba13MFM,Shiba14DIE}. 
We note that, as a corollary of a main result of the present paper, 
the unique solvability of \eqref{WDNP} 
with $\mathcal{W}^1_q(\Omega)=\widehat{W}^1_{q,0}(\Omega)$, which coincides with $W^1_{q,0}(\Omega)$ for the layer, 
is recovered thanks to observation by Shibata \cite[Remark 1.7]{Shiba13MFM}. 

The purpose of this paper is to provide a new solution formula of \eqref{SR} 
and to show the $\mathscr{R}$-boundedness of the solution operator families 
with the resolvent parameter $\lambda\in \Sigma_{\varepsilon,\gamma_0}$ for arbitrary $\gamma_0>0$ and $0<\varepsilon<\pi/2$. 
From this, we obtain the resolvent estimates for the same $\lambda$. 
As an application, we prove 
the maximal $L_p$-$L_q$ regularity for \eqref{S} with $1<p,q<\infty$ 
by using the operator-valued Fourier multiplier theorem due to Weis \cite[Theorem 3.4]{Wei01MA}. 
In order to derive a new exact solution formula to \eqref{SR}, 
we apply the Fourier transform with respect to tangential variable $x'\in{\mathbb{R}^{N-1}}$. 
We can take any $\gamma_0>0$, see \eqref{eq:resoset}, by taking advantage of use of this formula, 
while $\gamma_0$ was taken large enough in \cite{Shiba14DIE} for general domains. 
It is desirable to obtain the result above for $\lambda\in{\Sigma_{\varepsilon,0}}$, 
which is the crucial step toward analysis of large time behavior of solutions to \eqref{S}. 
This issue will be discussed elsewhere. 
The condition $\gamma_0>0$ is needed for several steps, 
but the most essential part is the estimate of the determinant, $\det{\fb{L}}$, in the solution formula, see Lemma \ref{Lem:EstidetL}. 
The solution formula itself is valid for $\lambda=0$ as well, 
however, when $\lambda=0$, we see that $|\det{\fb{L}}|^{-1}$ is too singular at the origin $\xi'=0$ in the Fourier side, 
see Remark \ref{Rem:gamma0} for details. 
Our strategy follows \cite{ShibaShimi12MSJ} and \cite{Sai15MMAS}, 
and it is based on a series of technical lemmas which guarantees the $\mathscr{R}$-boundedness from pointwise estimates of the kernel
by considering the solution formula as a singular integral operator. 
As compared with Neumann-Dirichlet boundary condition, 
however, the kernel decays only with the order $|x'|^{-(N-1)}$ unlike the case of $\mathbb{R}^N$ 
and, in the Fourier side, our symbol in the solution formula possesses higher singularity at the origin $\xi'=0$, 
see Remark \ref{Rem:highsingular}. 
This is because the same boundary conditions on both sides of $\partial\Omega$ imply 
that $\det{\fb{L}}$ involves similar rows and, therefore, is degenerate for $\xi'\to0$. 
Moreover, the estimate of $|\det{\fb{L}}|^{-1}$ is not homogeneous 
in the sense that the rate for $\xi'\to0$ is different from the one for $|\xi'|\to\infty$. 
In order to overcome those difficulties, 
we carry out a cut-off procedure and then employ analysis developed by Saito \cite[Lemma 5.5]{Sai15MMAS}, 
see Lemma \ref{Lem:RbddVS}. 
To deal with rather singular symbols mentioned above, 
as in \cite{Sai15MMAS}, after fixing the normal variable, we regard the solution formula as a singular integral on $\mathbb{R}^{N-1}$  
and deduce an estimate uniformly with respect to the normal variable. 
Then we handle the integral in the normal direction with the aid of the boundedness of the domain in that direction. 

Problems \eqref{SR} and \eqref{S} arise from a free boundary problem for the Navier-Stokes equations 
describing the motion of incompressible viscous fluid flow with free surfaces without taking account of surface tension: 
\begin{align} \label{NS} 
\left\{
\begin{array}{ll}
\partial_t{\fb{v}}+ (\fb{v}\cdot\nabla)\fb{v}- \text{Div}_{}\,{\fb{S}}(\fb{v},\pi) = 0, \quad \text{div}_{}\,{\fb{v}}=0 & \text{in } \Omega(t), \ 0<t<T, \\
\fb{S}(\fb{v},\pi)\nu_t = 0, \quad \fb{v}\cdot{\nu}_t=V_n & \text{on } \partial\Omega(t), \ 0<t<T, \\  
\fb{v}|_{t=0}=\fb{v}_0 & \text{in } \Omega. 
\end{array}
\right.
\end{align}
Here, 
$\fb{v}=(v_1(x,t),\cdots,v_N(x,t))^\mathsf{T}$ is the velocity vector field and $\pi=\pi(x,t)$ is the pressure 
in a time-dependent domain $\Omega(t)$, 
while $\fb{v}_0 = (v_{01}(x),\cdots,v_{0N}(x))^\mathsf{T}$ is a given initial velocity in the initial domain $\Omega$. 
By $V_n$ we denote the velocity of the evolution of $\partial\Omega(t)$ 
and $\nu_t$ stands for the unit outer normal to $\partial\Omega(t)$. 
The novelty of the problem \eqref{NS} is that both upper and lower boundaries are free ones to be determined. 
If we replace the boundary condition on the lower boundary by the non-slip one, 
the problem is considered in an asymptotic layer 
\begin{align}
\Omega(t) = \{ x = (x',x_N) \in \mathbb{R}^N \mid -b(x')<x_N<\eta(x',t) \} 
\end{align}
with a fixed bottom, where the only free boundary is the upper surface. 
In this setting, there are extensive studies and they will be mentioned in order. 
In the $L_2$-framework, 
the existence of solutions locally in time was established by Beale \cite{Bea81CPAM} without surface tension, 
whereas by Allain \cite{All85AFST,All87AMO} and by Tani \cite{Tani96ARMA} with surface tension. 
Here, in the latter case, the boundary condition on the free surface should be  
\begin{align}
\fb{S}(\fb{v},\pi)\nu_t = \sigma{\mathcal{H}}{\nu}_t, \quad \fb{v}\cdot{\nu}_t=V_n,  
\end{align}
with $\mathcal{H}$ being the doubled mean curvature of $\partial\Omega(t)$, 
where $\sigma>0$ is a constant representing the coefficient of surface tension. 
Also, in \cite{Tera85Hi} and \cite{Tera92Ky}, 
Teramoto showed the local well-posedness especially in an inclined layer without and with surface tension, respectively. 
The global well-posedness was proved by Beale \cite{Bea84ARMA} with surface tension 
and by Tani and Tanaka \cite{TaniTana95ARMA} with and without surface tension. 
Beale and Nishida \cite{BeaNishi85NMS} and Hataya and Kawashima \cite{HataKawa09NA} 
studied the large time behavior of the solution obtained in the study of Beale \cite{Bea84ARMA}. 
Hataya \cite{Hata09Ky} established the existence of a global solution with some decay properties 
under the periodic boundary condition in the horizontal direction.    
In the $L_q$-framework, Abels \cite{Abel05ADE} obtained the local well-posedness without surface tension. 
In the $L_p$-$L_q$ setting, Saito \cite{Sai18DE} proved the well-posedness globally in time without surface tension. 
Once we have the maximal $L_p$-$L_q$ regularity for \eqref{S} 
as a corollary of the $\mathscr{R}$-boundedness of the solution operator families of \eqref{SR}, 
a fix-point argument as in \cite{Shiba15DE,Sai18DE,ShibaShimi07DIE} leads to 
the local well-posedness of \eqref{NS} in $L_p$-in-time and $L_q$-in-space setting for $2<p<\infty$ and $N<q<\infty$, 
although this result is covered by the theory for general domains under the unique solvability of \eqref{WDNP} 
which was established by Shibata without and with taking account of surface tension 
in \cite{Shiba15DE} and in \cite{Shiba16MFDPF,Shiba16DCDS}, respectively.  
Nevertheless, for completeness of the present paper, 
we give the statement of a unique existence of a local solution to \eqref{NS} without proof, see Theorem \ref{Thm:LWP}.  

This paper is organized as follows: 
in Section 2, we state our main results on the $\mathscr{R}$-boundedness for \eqref{SR}, 
maximal regularity for \eqref{S} and local solvability of \eqref{NS}. 
In Section 3, we reduce \eqref{SR} to the problem in which the data are prescribed only on the boundary. 
The solution formula for the latter problem derived in Section 4 is a novelty of the present paper. 
In Section 5, we introduce some technical lemmas to prove the $\mathscr{R}$-boundedness from estimates of symbols 
and, finally, Section 6 is devoted to completion of the proof.


\section{Main Results} \label{sec:mainthm} 

In this section, we introduce notation and several function spaces, 
which are used throughout this paper, and then provide our main results. 

We denote the sets of all natural numbers, real numbers, and complex numbers 
by $\mathbb{N}$, $\mathbb{R}$ and $\mathbb{C}$, respectively, and set $\mathbb{N}_0=\mathbb{N}\cup\{0\}$. 
For multi-index $\alpha=(\alpha_1,\cdots,\alpha_N) \in \mathbb{N}_0^N$, we write
$|\alpha| = \alpha_1+\cdots+\alpha_N$ 
and $\partial_x^\alpha = \partial_1^{\alpha_1} \cdots \partial_N^{\alpha_N}$, 
where $x=(x_1,\cdots,x_N)$ and $\partial_J=\partial/\partial x_J$ for $1\leq J\leq N$. 
Given scalar function $f$ and $N$-vector function $\fb{g}= (g_1,\cdots,g_N)$, we set
\begin{align}
\nabla f &= (\partial_1f,\cdots,\partial_N f)^\mathsf{T},& 
\nabla \fb{g}&= (\partial_jg_i)_{1\leq i,j\leq N} \\
\nabla^2f &= (\partial^\alpha f \mid |\alpha|=2),& 
\qquad \nabla^2\fb{g}&= (\partial^\alpha g_J \mid |\alpha|=2, \ J=1,\cdots,N).
\end{align}
Let $D=\mathbb{R}^n,\,\mathbb{R}^n_+$, $\Omega$. 
For scalar functions $f$, $g$ and $N$-vector functions $\fb{f}$, $\fb{g}$, we write
\begin{align}
(f,g)_D &= \int_D f(x)g(x) \,dx, & (\fb{f},\fb{g})_D &= \int_D \fb{f}(x)\cdot{\fb{g}}(x) \,dx, \\
\langle f,g\rangle_{\partial D} &= \int_{\partial D} f(x)g(x) \,d\sigma, & 
\langle{\fb{f}},\fb{g}\rangle_{\partial D} &= \int_{\partial D} \fb{f}(x)\cdot{\fb{g}}(x) \,d\sigma, 
\end{align}
where $\fb{a}\cdot\fb{b} = \sum_{i=1}^N a_ib_i$ 
for $\fb{a}=(a_1,\cdots,a_N)^\mathsf{T}$ and $\fb{b}=(b_1,\cdots,b_N)^\mathsf{T}$, 
and $d\sigma$ is the surface element of $\partial D$. 
Let $1\leq q\leq \infty$ and $m \in \mathbb{N}_0$. 
The symbols $L_q(D)$ and $W^m_q(D)$ stand for the Lebesgue space and Sobolev space 
with their associated norms $\|\cdot\|_{L_q(\Omega)}$ and $\|\cdot\|_{W^m_q(\Omega)}$, respectively. 
Here, $W^0_q(D)=L_q(D)$. 
We denote by $C_0^\infty(D)$ the set of all $C^\infty(D)$ functions whose supports are compact and contained in $D$. 
We define
\begin{alignat*}{1}
\widehat{W}^1_q(D) &= \{ \theta\in L_{q,\text{loc}}(\overline{D}
) \mid \nabla \theta\in L_q(\Omega)^N \}, \\
W^1_{q,0}(D) &= \{\varphi\in{W^1_q(D)} \mid \varphi|_{\partial\Omega}=0\}, \\
\dot{W}^1_{q,0}(D) &= \{\varphi\in{\widehat{W}^1_q(D)} \mid \varphi|_{\partial\Omega}=0\}, \\
\widehat{W}^1_{q,0}(D) &= \text{ the closure of } W^1_{q,0}(D) \text{ in } \dot{W}^1_{q,0}(D)
 \text{ with respect to } \|\nabla\cdot\|_{L_q(\Omega)}, \\
\widehat{W}^{-1}_q(D) &= \text{ the dual space of } \widehat{W}^1_{q',0}(D), 
\end{alignat*}
where $q'$ is the dual exponent of $q$, that is, $1/q+1/q'=1$. 

\begin{Rem}
Similarly to \cite[Theorem A.3 (4)]{Shiba13MFM}, 
we have $\dot{W}^1_{q,0}(\Omega) = \widehat{W}^1_{q,0}(\Omega)$ for the layer $\Omega$ under consideration. 
Indeed, given $\varphi\in\dot{W}^1_{q,0}(\Omega)$, 
we consider an extension $E_0\varphi\in\dot{W}^1_{q,0}(\mathbb{R}^N_+)$ of $\varphi$ by setting 0 outside $\Omega$: 
\begin{align} \label{eq:DefE0}
E_0\varphi(x) = \left\{ 
\begin{array}{ll}
\varphi(x), &\text{when } x\in\Omega, \\
0,&\text{when } x\not\in\Omega, 
\end{array}
\right. 
\end{align} 
so that the question is reduced to the case of the half space. 
Then we obtain the desired property by \cite[Lemma A.1]{Shiba13MFM} and \cite[Lemma A.2]{Shiba13MFM}. 
\end{Rem} 

Given functions $f=f(x)$ and $g=g(\xi)$ on $\mathbb{R}^N$, 
the Fourier transform and its inverse transform are denoted by $\mathscr{F}_x$ and $\mathscr{F}_\xi^{-1}$, that is, 
\begin{align}
\mathscr{F}_x [f](\xi) &= \int_{\mathbb{R}^N} e^{-ix\cdot \xi} f(x) \, dx, 
& \mathscr{F}_\xi^{-1} [g](x) &= \frac{1}{(2\pi)^N} \int_{\mathbb{R}^N} e^{ix\cdot \xi} g(\xi) \, d\xi. 
\end{align}
Also, the partial Fourier transform with respect to $x'=(x_1,\cdots,x_{N-1})$ and its inverse transform are defined by
\begin{align}
\mathscr{F}_{x'} [f](\xi',x_N) &= \widehat{f}(\xi',x_N) = \int_{\mathbb{R}^{N-1}} e^{-ix'\cdot \xi'} f(x',x_N) \, dx', \\ 
\mathscr{F}_{\xi'}^{-1} [g](x',x_N) &= \mathscr{F}_{\xi'}^{-1}[g(\xi',x_N)](x') = \frac{1}{(2\pi)^{N-1}} \int_{\mathbb{R}^{N-1}} e^{ix'\cdot \xi'} g(\xi',x_N) \, d\xi'.
\end{align}

Given Banach spaces $X$ and $Y$, 
we denote by $\mathcal{L}(X,Y)$ the Banach space of all bounded linear operators from $X$ to $Y$, 
and we write $\mathcal{L}(X) = \mathcal{L}(X,X)$ to shorten notation. 
For $d \in \mathbb{N}$ and Banach space $X$ with norm $\|\cdot\|_X$, 
$d$-product of $X$ is denoted by $X^d$, 
nevertheless we continue to write $\|\cdot\|_X$ instead of $\|\cdot\|_{X^d}$ for abbreviation. 
We often write $\gamma=\text{Re\,}\lambda$ and $\tau=\text{Im\,}\lambda$ for the resolvent parameter $\lambda$ in the sector 
\begin{align} 
\Sigma_{\varepsilon,\gamma_0}= \{ \lambda \in \mathbb{C}\setminus\{0\} \mid |\arg\lambda| \leq \pi-\varepsilon, \ |\lambda| > \gamma_0 \} \quad 
(0<\varepsilon<\pi/2, \ \gamma_0\geq0). 
\end{align}
Finally, the letter $C$ denotes generic constants and $C_{a,b,\cdots}$ stands for constants depending on the quantities $a,b,\cdots$. 
Both constants $C$ and $C_{a,b,\cdots}$ may change from line to line. 

The notion of the $\mathscr{R}$-boundedness of operator families is defined as follows. 
\begin{Def} \label{Def:Rbdd}
Let $X$ and $Y$ be Banach spaces. 
The family $\mathcal{T} \subset \mathcal{L}(X,Y)$ is called $\mathscr{R}$-bounded 
if there exist $1 \leq p < \infty$ and $C>0$ such that 
for any $m\in{\mathbb{N}}$, $\{T_j\}_{j=1}^m \subset \mathcal{T}$, $\{x_j\}_{j=1}^m \subset X$ 
and sequence $\{r_j\}_{j=1}^m$ of independent, symmetric and $\{\pm1\}$-valued random variables on $(0,1)$, 
there holds the estimate 
\begin{align} \label{eq:DefRbdd}
\bigg\{\int_0^1 \Big\| \sum_{j=1}^m r_j(u)T_jx_j \Big\|_Y^p \, du\bigg\}^{1/p} 
\leq C \bigg\{\int_0^1 \Big\| \sum_{j=1}^m r_j(u)x_j \Big\|_X^p du\bigg\}^{1/p}. 
\end{align}
The infimum of such $C$ is called $\mathscr{R}$-bound 
and denoted by $\mathscr{R}_{\mathcal{L}(X,Y)}(\mathcal{T})$, or $\mathscr{R}_{\mathcal{L}(X)}(\mathcal{T})$ if $X=Y$. 
\end{Def}

\begin{Rem} \label{Rem:Rbdd}
\begin{itemize}
\item[a)]
It is well known that \eqref{eq:DefRbdd} holds for any $p \in [1,\infty)$ if it holds for some $p \in [1,\infty)$ 
by Kahane's inequality (cf. \cite[Theorem 3.11]{HytNeeVerWei16EMG}). 
\item[b)]
We get the uniform boundedness of $\mathcal{T}$ from the $\mathscr{R}$-boundedness of $\mathcal{T}$ 
by letting $m=1$ in \eqref{eq:DefRbdd}. 
\end{itemize}
\end{Rem}

We are in a position to state our main result 
on the $\mathscr{R}$-boundedness of the solution operator families of the resolvent problem \eqref{SR}. 
Set  
\begin{align}
X_q(\Omega) = \{ &(F_1,F_2,F_3,F_4,F_5,F_6) \mid \\ 
&F_1, F_4, F_5 \in L_q(\Omega)^N, F_2 \in \widehat{W}^{-1}_q(\Omega), F_3 \in L_q(\Omega), F_6 \in L_q(\Omega)^{N^2} \}. 
\end{align}

\begin{Thm} \label{Thm:Rbdd}
For all $\lambda \in \mathbb{C}\setminus(-\infty,0]$, 
there exist operators $\mathcal{U}(\lambda)=(\mathcal{U}_1(\lambda),\cdots,\mathcal{U}_N(\lambda))$ and $\mathcal{P}(\lambda)$ 
(precisely, they are given by \eqref{eq:defUP}) 
satisfying $\mathcal{U}(\lambda)\in{\mathcal{L}}(X_q(\Omega), W^2_q(\Omega)^N)$ and 
$\mathcal{P}(\lambda)\in{\mathcal{L}}(X_q(\Omega), \widehat{W}^1_q(\Omega))$ for $1<q<\infty$ 
such that the following assertions hold: 
\begin{itemize}
\item[a)] 
For any $1<q<\infty$, $\lambda \in \mathbb{C}\setminus(-\infty,0]$ and the data
\begin{align}
(\fb{f},g,\fb{h}) \in L_q(\Omega)^N \times (\widehat{W}^{-1}_q(\Omega) \cap W^1_q(\Omega)) \times W^1_q(\Omega)^N, 
\end{align} 
the pair $(\fb{u},\theta) \in W^2_q(\Omega)^N \times \widehat{W}^1_q(\Omega)$ given by  
\begin{align}
(\fb{u},\theta)&=(\mathcal{U}(\lambda), \mathcal{P}(\lambda))(\fb{f}, \lambda g,\lambda^{1/2}g, \nabla g, \lambda^{1/2}{\fb{h}}, \nabla \fb{h}) 
\end{align}
is a solution of \eqref{SR}. 
Additionally, the solution of \eqref{SR} is unique, 
that is, if $(\fb{u},\theta)\in{W^2_q(\Omega)}^N\times{\widehat{W}^1_q(\Omega)}$ satisfies \eqref{SR} with $(\fb{f},g,\fb{h})=(0,0,0)$, 
then $\fb{u}=0$ and $\theta=0$ a.e.  
\item[b)] 
For any $1<q<\infty$, $0<\varepsilon<\pi/2$, $\gamma_0>0$, $\ell=0,1$ and $1\leq m,n,J\leq N$, there hold 
\begin{align} \label{eq:Rbddsolop}
\mathscr{R}_{\mathcal{L}(X_q(\Omega),L_q(\Omega))}(\{ (\tau{\partial}_\tau)^\ell \lambda \mathcal{U}_J(\lambda) \mid \lambda\in{\Sigma_{\varepsilon,\gamma_0}} \})&\leq C_{N,q,\varepsilon,\gamma_0,\mu,\delta}, \\ 
\mathscr{R}_{\mathcal{L}(X_q(\Omega),L_q(\Omega))}(\{ (\tau{\partial}_\tau)^\ell \gamma \mathcal{U}_J(\lambda) \mid \lambda\in{\Sigma_{\varepsilon,\gamma_0}} \})&\leq C_{N,q,\varepsilon,\gamma_0,\mu,\delta}, \\ 
\mathscr{R}_{\mathcal{L}(X_q(\Omega),L_q(\Omega))}(\{ (\tau{\partial}_\tau)^\ell \lambda^{1/2} \partial_m \mathcal{U}_J(\lambda) \mid \lambda\in{\Sigma_{\varepsilon,\gamma_0}} \})&\leq C_{N,q,\varepsilon,\gamma_0,\mu,\delta}, \\ 
\mathscr{R}_{\mathcal{L}(X_q(\Omega),L_q(\Omega))}(\{ (\tau{\partial}_\tau)^\ell \partial_m{\partial}_n \mathcal{U}_J(\lambda) \mid \lambda\in{\Sigma_{\varepsilon,\gamma_0}} \})&\leq C_{N,q,\varepsilon,\gamma_0,\mu,\delta}, \\
\mathscr{R}_{\mathcal{L}(X_q(\Omega),L_q(\Omega))}(\{ (\tau{\partial}_\tau)^\ell \partial_m \mathcal{P}(\lambda) \mid \lambda\in{\Sigma_{\varepsilon,\gamma_0}} \})&\leq C_{N,q,\varepsilon,\gamma_0,\mu,\delta}, 
\end{align} 
where $\Sigma_{\varepsilon,\gamma_0}$ is given by \eqref{eq:resoset} and $\lambda=\gamma+i\tau$. 
\end{itemize}
\end{Thm}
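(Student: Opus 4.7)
The plan is to execute the four-step strategy announced in the introduction, with the solution formula of Section~4 at the heart of the argument. First I would carry out the reduction of Section~3 to transfer the interior data $(\fb{f},g)$ to the boundary: set $\fb{u}_1$ to be the whole-space solution associated with a suitable extension of $\fb{f}$, and modify the pressure to absorb the divergence condition $\text{div}\,\fb{u}=g$ by introducing an auxiliary $\theta_g$ solving a weak Dirichlet–Neumann problem. After this reduction, it suffices to consider the problem $\lambda\fb{u}-\text{Div}\,\fb{S}(\fb{u},\theta)=0$, $\text{div}\,\fb{u}=0$ in $\Omega$ with $\fb{S}(\fb{u},\theta)\nu=\widetilde{\fb{h}}$ on $\partial\Omega$, where $\widetilde{\fb{h}}$ collects the boundary trace of the original $\fb{h}$ and the contributions from the reduction.

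Next, I would apply the partial Fourier transform $\mathscr{F}_{x'}$ to the reduced system, yielding for each fixed $\xi'\in\mathbb{R}^{N-1}$ a linear ODE system in $x_N\in(0,\delta)$ whose general solution decomposes into modes $e^{\pm|\xi'|x_N}$ and $e^{\pm Ax_N}$ with $A=\sqrt{|\xi'|^2+\lambda/\mu}$. Imposing the two Neumann boundary conditions at $x_N=0$ and $x_N=\delta$ on this ansatz produces a $(2N)\times(2N)$ linear system whose coefficient matrix $\fb{L}=\fb{L}(\xi',\lambda)$ must be inverted; explicit Cramer-type formulas then express $\widehat{\fb{u}}$ and $\widehat\theta$ as rational combinations of the boundary data divided by $\det\fb{L}$. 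Reverting the Fourier transform provides the operator-valued representation of $\mathcal{U}(\lambda)$ and $\mathcal{P}(\lambda)$ through \eqref{eq:defUP}, and uniqueness will follow from an energy argument on the dual problem, combined with the solvability of the weak Dirichlet–Neumann problem $\widehat{W}^1_{q,0}$.

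To establish the $\mathscr{R}$-boundedness estimates, I would view each component of $\mathcal{U}_J(\lambda)$ and $\partial_m\mathcal{P}(\lambda)$ as a Fourier multiplier in $\xi'$ whose symbol is a linear combination of terms of the form $\xi^\alpha(\det\fb{L})^{-1}M(\xi',\lambda,x_N,y_N)$ with $M$ an analytic combination of $A$, $|\xi'|$, and exponential factors. The crucial analytic input is Lemma~\ref{Lem:EstidetL}, which provides lower bounds on $|\det\fb{L}|$ controlling its degeneracy at $\xi'=0$ uniformly on $\Sigma_{\varepsilon,\gamma_0}$. With these bounds in hand, I would verify the pointwise Mikhlin-type estimates $|(\tau\partial_\tau)^\ell \partial_{\xi'}^\alpha \{\text{symbol}\}|\lesssim |\xi'|^{-|\alpha|}$ in the good range of $\xi'$, and then carry out the cut-off procedure separating low and high tangential frequencies; on the high-frequency region the standard Shibata–Shimizu technology of \cite{ShibaShimi12MSJ} applies, while on the low-frequency region I would invoke Saito's Lemma~5.5 (the counterpart of \cite[Lemma~5.5]{Sai15MMAS}) to convert the admissible degeneracy of $(\det\fb{L})^{-1}$ into an $\mathscr{R}$-bound. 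After fixing $x_N$ I would treat the representation as a singular integral on $\mathbb{R}^{N-1}$, deriving bounds uniform in $x_N\in(0,\delta)$, and conclude by integrating in $x_N$ using the boundedness of the normal interval; the multipliers $(\tau\partial_\tau)^\ell$ are harmless because $\partial_\tau$ acting on $A$ produces only $O(A^{-1})$ factors absorbed by the $\gamma_0>0$ condition.

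The main obstacle will be the symbol analysis near $\xi'=0$. Because both boundaries carry the same Neumann condition, $\det\fb{L}$ contains two nearly parallel rows as $\xi'\to 0$, producing a more singular behavior than in the Neumann–Dirichlet case of \cite{Sai15MMAS} and the non-homogeneous scaling between $\xi'\to 0$ and $|\xi'|\to\infty$ mentioned in Remark~\ref{Rem:highsingular}. Overcoming this requires delicately tracking the cancellation among the numerators produced by Cramer's rule—so that each pole of $(\det\fb{L})^{-1}$ is matched by a compensating zero of the numerator—combined with the cut-off argument and Lemma~\ref{Lem:RbddVS} to extract enough decay in $|\xi'|$ to apply the $\mathscr{R}$-boundedness technical lemmas; this is the step where the requirement $\gamma_0>0$ (rather than $\gamma_0=0$) is essentially used.
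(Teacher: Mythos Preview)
Your overall strategy matches the paper's, but several specifics diverge in ways worth flagging.

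\textbf{Reduction step.} The paper does \emph{not} absorb the divergence constraint via an auxiliary pressure $\theta_g$ solving a weak Dirichlet--Neumann problem. Instead it uses an explicit Fourier-multiplier operator $\mathcal{V}_0$ (Lemma~\ref{Lem:diveq}) to construct a velocity correction with $\text{div}\,\mathcal{V}_0 g = g$, then subtracts the whole-space Stokes solution $(S_0(\lambda)\tilde{\fb{f}}, T_0\tilde{\fb{f}})$ (Lemma~\ref{Lem:Rbddha}). In this paper the solvability of the weak Dirichlet problem (Proposition~\ref{Prop:USWDNP}) is a \emph{corollary} of Theorem~\ref{Thm:Rbdd}, so invoking it in the reduction or in the uniqueness argument would be circular unless you cite Simader--Ziegler independently.

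\textbf{Size of the algebraic system.} After the partial Fourier transform, the paper does not solve a $(2N)\times(2N)$ system. It first contracts the tangential velocity components into $\widehat{u}_d = i\xi'\cdot\widehat{u}'$, obtaining a $4\times 4$ system for $(\mu_{1N},\beta_{1N},\mu_{2N},\beta_{2N})$; the tangential components $u_j$ ($j<N$) are recovered afterwards by solving a scalar Laplace resolvent problem \eqref{eq:PDEuj} using $u_N$ and $\theta$ as data (Lemmas~\ref{Lem:RbddHarmw}, \ref{Lem:RbddHarmla}).

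\textbf{Handling the singularity at $\xi'=0$.} This is where your description departs most from the paper. There is \emph{no} cancellation between the cofactors and $\det\fb{L}$: the cofactors $L_{k,j}$ sit in the multiplier classes $\mathbb{M}_{5,2}$ or $\mathbb{M}_{4,2}$ with no special vanishing at the origin (Lemma~\ref{Lem:EstiSym}~c)). The extra factor $(1+A^{-1})$ in the bound for $(\det\fb{L})^{-1}$ (Lemma~\ref{Lem:EstidetL}) is absorbed purely by the cut-off identity $1=\zeta_0(\xi')+\zeta_1(\xi')/A$ with $\zeta_1=A(1-\zeta_0)$: on $\text{supp}\,\zeta_0\subset\{|\xi'|\ge 1\}$ one has $1+A^{-1}\le 2$, while on $\text{supp}\,\zeta_1\subset\{|\xi'|\le 2\}$ the factor $A^{-1}$ is eaten by $\zeta_1$, at the cost of losing one power of $A$ in front of the kernel. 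That missing $A$ is exactly what Lemma~\ref{Lem:RbddVS} (the $K_3$ operator) is designed to tolerate, by treating the tangential convolution as an $\mathscr{R}$-bounded multiplier on $L_q(\mathbb{R}^{N-1})$ uniformly in $x_N,y_N$ and then using the finite width $\delta$ of the layer. So the mechanism is cut-off plus a weaker operator lemma, not numerator--denominator cancellation.
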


By this theorem and Remark \ref{Rem:Rbdd} b), we immediately obtain the resolvent estimates: 
\begin{Prop} \label{Prop:RE}
Let $1<q<\infty$, $0<\varepsilon<\pi/2$ and $\gamma_0>0$. 
There exists a constant $C=C_{N,q,\varepsilon,\gamma_0,\mu,\delta}>0$ 
such that for any $\lambda\in{\Sigma_{\varepsilon,\gamma_0}}$, $\fb{f}\in{L_q(\Omega)}^N$, $g\in{\widehat{W}^{-1}_q(\Omega)}\cap{W^1_q(\Omega)}$ and $\fb{h}\in{W^1_q(\Omega)}^N$, 
the solution $(\fb{u},p)$ given in Theorem \ref{Thm:Rbdd} satisfies 
\begin{align}
&\|(\lambda{\fb{u}},\lambda^{1/2}\nabla{\fb{u}},\nabla^2\fb{u},\nabla p)\|_{L_q(\Omega)} \\
&\leq C(\|\fb{f}\|_{L_q(\Omega)}+\|\lambda g\|_{\widehat{W}^{-1}_q(\Omega)}+\|(\lambda^{1/2}g,\nabla g)\|_{L_q(\Omega)}
+\|(\lambda^{1/2}{\fb{h}},\nabla{\fb{h}})\|_{L_q(\Omega)}). 
\end{align} 
\end{Prop}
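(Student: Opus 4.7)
The plan is to deduce Proposition \ref{Prop:RE} as an essentially formal consequence of Theorem \ref{Thm:Rbdd} and Remark \ref{Rem:Rbdd} b). No genuinely new analysis is required; the work is bookkeeping between the solution-operator norms and the norms appearing on both sides of the estimate.

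First, I invoke Theorem \ref{Thm:Rbdd} a) to represent the solution in the form
\begin{align}
(\fb{u},\theta)=(\mathcal{U}(\lambda),\mathcal{P}(\lambda))(\fb{f},\lambda g,\lambda^{1/2}g,\nabla g,\lambda^{1/2}\fb{h},\nabla\fb{h}),
\end{align}
which is legitimate since the data tuple $\Phi(\fb{f},g,\fb{h}):=(\fb{f},\lambda g,\lambda^{1/2}g,\nabla g,\lambda^{1/2}\fb{h},\nabla\fb{h})$ lies in $X_q(\Omega)$ under the hypotheses, with
\begin{align}
\|\Phi(\fb{f},g,\fb{h})\|_{X_q(\Omega)}
=\|\fb{f}\|_{L_q(\Omega)}+\|\lambda g\|_{\widehat{W}^{-1}_q(\Omega)}+\|(\lambda^{1/2}g,\nabla g)\|_{L_q(\Omega)}+\|(\lambda^{1/2}\fb{h},\nabla\fb{h})\|_{L_q(\Omega)},
\end{align}
which is precisely the right-hand side of the claimed inequality.

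Next, I specialize the $\mathscr{R}$-boundedness estimates of Theorem \ref{Thm:Rbdd} b) to the case $\ell=0$. By Remark \ref{Rem:Rbdd} b), taking $m=1$ in the definition of $\mathscr{R}$-boundedness (Definition \ref{Def:Rbdd}) shows that each of the operator families
\begin{align}
\{\lambda\mathcal{U}_J(\lambda)\},\ \{\lambda^{1/2}\partial_m\mathcal{U}_J(\lambda)\},\ \{\partial_m\partial_n\mathcal{U}_J(\lambda)\},\ \{\partial_m\mathcal{P}(\lambda)\}\qquad (\lambda\in\Sigma_{\varepsilon,\gamma_0})
\end{align}
is uniformly bounded in $\mathcal{L}(X_q(\Omega),L_q(\Omega))$ by the same constant $C=C_{N,q,\varepsilon,\gamma_0,\mu,\delta}$. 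Applied to the single element $\Phi(\fb{f},g,\fb{h})\in X_q(\Omega)$, this yields
\begin{align}
\|\lambda\fb{u}\|_{L_q(\Omega)}+\|\lambda^{1/2}\nabla\fb{u}\|_{L_q(\Omega)}+\|\nabla^2\fb{u}\|_{L_q(\Omega)}+\|\nabla\theta\|_{L_q(\Omega)}
\leq C\|\Phi(\fb{f},g,\fb{h})\|_{X_q(\Omega)},
\end{align}
summing over the finitely many indices $J,m,n\in\{1,\dots,N\}$ and absorbing the combinatorial factor into $C$. Combined with the identification of the right-hand side above, this is exactly the asserted estimate.

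Since Theorem \ref{Thm:Rbdd} is granted, there is no real obstacle here; the only points worth caring about are (i) making sure the data tuple $\Phi(\fb{f},g,\fb{h})$ genuinely belongs to $X_q(\Omega)$ (the component $\lambda g\in\widehat{W}^{-1}_q(\Omega)$ uses the embedding $L_q(\Omega)\hookrightarrow\widehat{W}^{-1}_q(\Omega)$ together with $g\in W^1_q(\Omega)$, and the remaining components are clearly in the prescribed spaces), and (ii) checking that the list of $\mathscr{R}$-bounded families in Theorem \ref{Thm:Rbdd} b) covers each quantity on the left-hand side of Proposition \ref{Prop:RE}. Both are routine.
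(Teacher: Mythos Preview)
Your proof is correct and follows exactly the route indicated in the paper, which states that Proposition \ref{Prop:RE} is obtained ``immediately'' from Theorem \ref{Thm:Rbdd} and Remark \ref{Rem:Rbdd} b). One trivial remark: in point (i) you need not invoke any embedding, since the hypothesis already gives $g\in\widehat{W}^{-1}_q(\Omega)$, whence $\lambda g\in\widehat{W}^{-1}_q(\Omega)$ directly.
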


We obtain the unique solvablity of the weak Dirichlet problem \eqref{WDNP} 
with $\mathcal{W}^1_q(\Omega)=\widehat{W}^1_{q,0}(\Omega)$ 
by the preceding proposition and \cite[Remark 1.7]{Shiba13MFM}. 
Moreover, we deduce the estimate of the solution $\theta$ itself as well by the Poincar\'{e} inequality 
in a similar manner to \eqref{eq:0dpres} below.  
\begin{Prop} \label{Prop:USWDNP}
For all $\fb{f}_0\in{L_q(\Omega)}^N$, there exists a unique solution 
\begin{align}
\Pi_0{\fb{f}_0}:=\theta\in W^1_{q,0}(\Omega) 
\end{align}
of \eqref{WDNP} with $\mathcal{W}^1_q(\Omega)=\widehat{W}^1_{q,0}(\Omega)$ along with 
\begin{align}
\|\theta\|_{W^1_q(\Omega)} \leq C_{N,q,\delta} \|\fb{f}_0\|_{L_q(\Omega)}. 
\end{align}
\end{Prop}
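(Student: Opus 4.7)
The plan is to deduce this proposition from the resolvent estimate of Proposition \ref{Prop:RE} via the reduction indicated by Shibata in \cite[Remark 1.7]{Shiba13MFM}.

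I would begin by fixing any $\lambda_0 \in \Sigma_{\varepsilon,\gamma_0}$ and applying Proposition \ref{Prop:RE} with data $(\fb{f}, g, \fb{h}) = (\fb{f}_0, 0, 0)$, obtaining a Stokes pair $(\fb{u}, \theta_*) \in W^2_q(\Omega)^N \times \widehat{W}^1_q(\Omega)$ satisfying $\|(\lambda_0\fb{u}, \nabla^2\fb{u}, \nabla\theta_*)\|_{L_q(\Omega)} \leq C\|\fb{f}_0\|_{L_q(\Omega)}$. Testing the momentum equation $\lambda_0\fb{u} - \text{Div}\,\fb{S}(\fb{u},\theta_*) = \fb{f}_0$ against $\nabla\varphi$ for $\varphi \in W^1_{q',0}(\Omega)$ and integrating by parts, the term $(\lambda_0\fb{u}, \nabla\varphi)_\Omega$ vanishes by $\text{div}\,\fb{u} = 0$ and $\varphi|_{\partial\Omega} = 0$, while $(\mu\Delta\fb{u}, \nabla\varphi)_\Omega$ vanishes thanks to the identity $\text{div}\,\Delta\fb{u} = \Delta\,\text{div}\,\fb{u} = 0$ (justified by approximating $\fb{u}$ by smooth divergence-free fields and $\varphi$ by smooth functions vanishing on $\partial\Omega$). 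What remains is the weak equation $(\nabla\theta_*, \nabla\varphi)_\Omega = (\fb{f}_0, \nabla\varphi)_\Omega$, which extends by density to all $\varphi \in \widehat{W}^1_{q',0}(\Omega)$.

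The main obstacle is that $\theta_*$ obtained this way belongs only to $\widehat{W}^1_q(\Omega)$ rather than $\widehat{W}^1_{q,0}(\Omega)$: indeed, the condition $\fb{S}(\fb{u},\theta_*)\nu = 0$ forces $\theta_*|_{\partial\Omega} = 2\mu\,\partial_N u_N|_{\partial\Omega}$, which is generally nonzero. Following Shibata's remark, I would subtract the harmonic Dirichlet extension $\psi \in W^1_q(\Omega)$ of this trace---classically available on the layer via tangential Fourier analysis, with bound $\|\nabla\psi\|_{L_q(\Omega)} \leq C\|\fb{f}_0\|_{L_q(\Omega)}$ via the trace theorem applied to $\partial_N u_N \in W^1_q(\Omega)$ combined with the resolvent estimate---and set $\theta := \theta_* - \psi$. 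The identity $(\nabla\psi, \nabla\varphi)_\Omega = 0$ (from $\Delta\psi = 0$ and $\varphi|_{\partial\Omega} = 0$) preserves the weak equation, while $\theta$ now lies in $\widehat{W}^1_{q,0}(\Omega) = W^1_{q,0}(\Omega)$ (equality by the remark preceding \eqref{eq:DefE0}), giving the operator $\Pi_0\fb{f}_0 := \theta$.

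For uniqueness I would use a duality argument: if $\theta \in W^1_{q,0}(\Omega)$ solves \eqref{WDNP} with $\fb{f}_0 = 0$, then for arbitrary $\fb{k} \in L_{q'}(\Omega)^N$ the same construction at the dual index produces $\psi \in W^1_{q',0}(\Omega)$ satisfying $(\nabla\psi, \nabla\varphi)_\Omega = (\fb{k}, \nabla\varphi)_\Omega$ for all $\varphi \in \widehat{W}^1_{q,0}(\Omega)$; taking $\varphi = \theta$ yields $(\fb{k}, \nabla\theta)_\Omega = (\nabla\psi, \nabla\theta)_\Omega = 0$, the last equality following from the vanishing equation for $\theta$ tested with $\psi \in \widehat{W}^1_{q',0}(\Omega)$. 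Arbitrariness of $\fb{k}$ forces $\nabla\theta = 0$, hence $\theta = 0$ by the vanishing trace. Finally the norm bound decomposes as $\|\nabla\theta\|_{L_q(\Omega)} \leq C\|\fb{f}_0\|_{L_q(\Omega)}$ from the previous steps, and $\|\theta\|_{L_q(\Omega)} \leq C_\delta\|\nabla\theta\|_{L_q(\Omega)}$ by the Poincaré inequality in the $x_N$-direction of bounded width $\delta$, paralleling the derivation of \eqref{eq:0dpres} later in the paper.
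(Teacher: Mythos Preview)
Your proposal is correct and is precisely the argument the paper has in mind: the paper's own proof consists only of the sentence ``by the preceding proposition and \cite[Remark 1.7]{Shiba13MFM}'' together with the Poincar\'e inequality as in \eqref{eq:0dpres}, and you have faithfully unpacked the content of Shibata's remark---namely, test the Stokes momentum equation against $\nabla\varphi$, correct the boundary trace of the pressure by a harmonic extension, and conclude uniqueness by duality. One minor point: in the vanishing of $(\mu\Delta\fb{u},\nabla\varphi)_\Omega$ you do not need to approximate $\fb{u}$ by smooth divergence-free fields; since $\fb{u}\in W^2_q(\Omega)^N$ with $\text{div}\,\fb{u}=0$, it suffices to approximate $\varphi$ by $C_0^\infty(\Omega)$ functions (dense in $W^1_{q',0}(\Omega)=\widehat{W}^1_{q',0}(\Omega)$ for the layer) and use $\sum_j(\Delta u_j,\partial_j\varphi)=-\langle\Delta\,\text{div}\,\fb{u},\varphi\rangle=0$ distributionally.
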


\begin{Rem}
We can reconstruct a solution operator $\widetilde{\mathcal{P}}(\lambda)$ for the pressure $\theta$ to \eqref{SR} 
satisfying the $\mathscr{R}$-boundedness for 
$\widetilde{\mathcal{P}}(\lambda)$ as well as $\nabla\widetilde{\mathcal{P}}(\lambda)$ 
thanks to Proposition \ref{Prop:USWDNP}, 
where $\lambda$ varies in $\Sigma_{\varepsilon,\gamma_0}$ for any $0<\varepsilon<\pi/2$ and $\gamma_0>0$.  
In fact, $\theta$ satisfies the Poisson equation subject to the Dirichlet boundary condition, 
which can be reduced to the weak Dirichlet problem \eqref{WDNP} by subtracting the boundary data, 
as in \eqref{pres} and \eqref{pres2} below.  
Then we get 
\begin{align} \notag
\theta=\tilde\nu\cdot[\mu\fb{D}(\fb{u})\tilde\nu-\fb{h}]
+\Pi_0\big( -\lambda\fb{u} +\textup{Div}\,\big(\mu\fb{D}(\fb{u})\big) +\fb{f}
	-\nabla(\tilde\nu\cdot[\mu\fb{D}(\fb{u})\tilde\nu-\fb{h}]) \big)
\end{align}
and, accordingly, define $\widetilde{\mathcal{P}}(\lambda)$ by 
\begin{align*}
\widetilde{\mathcal{P}}(\lambda)\widetilde{F}
&= \tilde\nu\cdot [\mu\fb{D}(\mathcal{U}(\lambda)F)\tilde\nu-F_7] \\
&+\Pi_0\Big( 
-\lambda\mathcal{U}(\lambda)F +\textup{Div}\,\big(\mu\fb{D}(\mathcal{U}(\lambda)F)\big) +F_1 
-\nabla\big(\tilde\nu\cdot [\mu\fb{D}(\mathcal{U}(\lambda)F)\tilde\nu-F_7]\big) \Big)
\end{align*}
for $\widetilde{F}:=(F,F_7)\in\widetilde{X}_q(\Omega):=X_q(\Omega)\times W^1_q(\Omega)^N$ so that
\begin{align}
\theta = \widetilde{\mathcal{P}}(\lambda)(\fb{f},\lambda g,\lambda^{1/2}g,\nabla g,\lambda^{1/2}\fb{h},\nabla \fb{h}, \fb{h}), 
\end{align}
where $\mathcal{U}(\lambda)$ and $\Pi_0$ are the solution operators 
defined in Theorem \ref{Thm:Rbdd} and Proposition \ref{Prop:USWDNP}, respectively, 
while the suitable extension $\tilde\nu$ of the unit outer normal $\nu$ is given by \eqref{eq:defMltn} below. 
It is possible to verify 
\begin{align}
\mathscr{R}_{\mathcal{L}(\widetilde{X}_q(\Omega),L_q(\Omega)^{N+1})}
(\{(\tau\partial_\tau)^\ell (\widetilde{\mathcal{P}}(\lambda), \nabla\widetilde{\mathcal{P}}(\lambda)) 
\mid \lambda\in\Sigma_{\varepsilon,\gamma_0}\}) \leq C_{N,q,\varepsilon,\gamma_0,\mu,\delta}
\end{align}
for any $0<\varepsilon<\pi/2$, $\gamma_0>0$ and $\ell=0,1$ 
on account of Theorem \ref{Thm:Rbdd}, Proposition \ref{Prop:USWDNP}, Lemma \ref{Lem:Rbddprop} below
together with the $\mathscr{R}$-boundedness for the multiplication operator $M_{\lambda^{-1/2}}$, 
see \eqref{eq:defMltn}--\eqref{eq:RbdMl}. 
\end{Rem}


As an important corollary to Theorem \ref{Thm:Rbdd}, 
we establish the maximal $L_p$-$L_q$ regularity for the nonstationary Stokes problem \eqref{S}  
with the aid of the operator-valued Fourier multiplier theorem due to Weis \cite{Wei01MA}. 
To describe the statement precisely, we introduce some function spaces. 
Let $I$ be an interval in $\mathbb{R}$, $X$ a Banach space, $1<p<\infty$, $m \in \mathbb{N}_0$ and $\gamma_0>0$. 
We denote the $X$-valued Bochner and Sobolev spaces by $L_p(I;X)$ and $W^m_p(I;X)$, respectively, and let
\begin{align*} 
W^m_{p,\gamma_0}(I;X) &= \{ f\in W^m_{p,\text{loc}}(\overline{I};X) \mid e^{-\gamma_0 t} f(t) \in W^m_p(I;X) \}, \\
W^m_{p,0,\gamma_0}(\mathbb{R};X) &= \{ f\in W^m_{p,\gamma_0}(\mathbb{R};X) \mid f(t)=0 \text{ for } t<0 \}, \\
L_{p,\gamma_0}(I;X) &= W^0_{p,\gamma_0}(I;X), \quad L_{p,0,\gamma_0}(\mathbb{R};X) = W^0_{p,0,\gamma_0}(\mathbb{R};X), \\
H^{1/2}_{p,0,\gamma_0}(\mathbb{R};X) &= \{ f\in L_{p,0,\gamma_0}(\mathbb{R};X) \mid 
e^{-\gamma t}\Lambda^{1/2}_\gamma f(t) \in L_p(\mathbb{R};X) \text{ for all }{\gamma}\geq\gamma_0 \}. 
\end{align*}
Here, we have set 
\begin{align}
[\Lambda_\gamma^s f](t) = \mathscr{L}_\gamma^{-1}[\lambda^s \mathscr{L}(\lambda)](t)
\end{align}
for $s>0$ and $\gamma>0$ where $\lambda = \gamma+i\tau \in \mathbb{C}$; 
for functions $f,g$ with $f(t)=0$ ($t<0$), 
we define the Laplace transform $\mathscr{L}[f]$ of $f$ and its inverse transform $\mathscr{L}_\gamma^{-1}[g]$ of $g$ by 
\begin{align} \label{eq:DefLaptrans}
\mathscr{L}(\lambda) &= \int_{-\infty}^\infty e^{-\lambda t} f(t) \,dt, & 
\mathscr{L}_\gamma^{-1}[g](t) &= \frac{1}{2\pi} e^{\gamma t} \int_{-\infty}^\infty e^{i\tau t} g(\gamma+i\tau) \,d\tau. 
\end{align}
Note that we have $W_{p,0,\gamma_0}^1(\mathbb{R};\widehat{W}^{-1}_q(\Omega)) \cap L_{p,0,\gamma_0}(\mathbb{R};W^1_q(\Omega)) \subset H_{p,0,\gamma_0}^{1/2}(\mathbb{R};L_q(\Omega))$; 
this is proved in \cite[Appendix A]{Shiba15DE} 
by applying Weis' operator-valued Fourier multiplier theorem after extending functions to be defined on $\mathbb{R}^N$.

The maximal $L_p$-$L_q$ regularity theorem for \eqref{S} is stated in the following theorem. 
This can be proved by the same way as in \cite[Theorem 2.1]{Sai15MMAS} and we may omit the proof. 
\begin{Thm} \label{Thm:MR}
Let $1<p,q<\infty$ and $\gamma_0>0$. Then, for every data
\begin{align*}
\fb{F}\in L_{p,0,\gamma_0}(\mathbb{R};L_q(\Omega)^N), \quad 
G \in W_{p,0,\gamma_0}^1(\mathbb{R};\widehat{W}^{-1}_q(\Omega)) \cap L_{p,0,\gamma_0}(\mathbb{R};W^1_q(\Omega)), \\ 
\fb{H}\in H_{p,0,\gamma_0}^{1/2}(\mathbb{R};L_q(\Omega)^N) \cap L_{p,0,\gamma_0}(\mathbb{R};W^1_q(\Omega)^N), 
\end{align*}
problem \eqref{S} admits a solution $(\fb{U},\Theta)$ of class 
\begin{align} \label{eq:MRTsol}
\begin{aligned} 
\fb{U}&\in W^1_{p,\gamma_0}(0,\infty;L_q(\Omega)^N) \cap L_{p,\gamma_0}(0,\infty;W^2_q(\Omega)^N), \\ 
\Theta&\in L_{p,\gamma_0}(0,\infty;\widehat{W}^1_q(\Omega)) 
\end{aligned}
\end{align}
satisfying the estimate 
\begin{align*} 
&\| e^{-\gamma t}(\partial_t{\fb{U}},\gamma{\fb{U}},\Lambda_\gamma^{1/2}\nabla{\fb{U}},\nabla^2\fb{U},\nabla{\Theta}) \|_{L_p(0,\infty;L_q(\Omega))} \\
&\leq C_{\gamma_0}\{ \| e^{-\gamma t}(\fb{F},\Lambda_\gamma^{1/2}G,\nabla G,\Lambda_\gamma^{1/2}{\fb{H}},\nabla{\fb{H}}) \|_{L_p(\mathbb{R};L_q(\Omega))} 
+ \|e^{-\gamma t}{\partial}_tG\|_{L_p(\mathbb{R};\widehat{W}^{-1}_q(\Omega))} \}  
\end{align*}
for any $\gamma\geq\gamma_0$ with some constant $C_{\gamma_0}$ independent of $\gamma$. 
Moreover, the solution of \eqref{S} is unique, 
that is, if $(\fb{U},\Theta)$ of class \eqref{eq:MRTsol} is a solution to \eqref{S} with $(\fb{F},G,\fb{H})=(0,0,0)$, 
then, $(\fb{U},\Theta)(x,t)=(0,0)$ a.e. $(x,t)\in\Omega\times(0,\infty)$.   
\end{Thm}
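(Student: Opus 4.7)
The plan is to reduce problem \eqref{S} to the resolvent problem \eqref{SR} via the Laplace transform in time and then transfer the $\mathscr{R}$-boundedness from Theorem \ref{Thm:Rbdd} to $L_p$-boundedness in the time variable by invoking the operator-valued Fourier multiplier theorem of Weis. Concretely, I would first set $\lambda=\gamma+i\tau$ with fixed $\gamma\geq\gamma_0$ and, for data $(\fb{F},G,\fb{H})$ satisfying the hypotheses (hence extended by $0$ to $t<0$), take the Laplace transform $\mathscr{L}$ in $t$. Candidates for the solution are then defined by
\begin{align*}
\fb{U}(\cdot,t)&=\mathscr{L}^{-1}_\gamma\bigl[\mathcal{U}(\lambda)\bigl(\mathscr{L}[\fb{F}],\lambda\mathscr{L}[G],\lambda^{1/2}\mathscr{L}[G],\nabla\mathscr{L}[G],\lambda^{1/2}\mathscr{L}[\fb{H}],\nabla\mathscr{L}[\fb{H}]\bigr)\bigr](t),\\
\Theta(\cdot,t)&=\mathscr{L}^{-1}_\gamma\bigl[\widetilde{\mathcal{P}}(\lambda)\bigl(\mathscr{L}[\fb{F}],\lambda\mathscr{L}[G],\lambda^{1/2}\mathscr{L}[G],\nabla\mathscr{L}[G],\lambda^{1/2}\mathscr{L}[\fb{H}],\nabla\mathscr{L}[\fb{H}],\mathscr{L}[\fb{H}]\bigr)\bigr](t),
\end{align*}
with the solution operators from Theorem \ref{Thm:Rbdd} and the preceding Remark. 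By part (a) of Theorem \ref{Thm:Rbdd}, each Laplace-side pair solves \eqref{SR} with right-hand side $\mathscr{L}[(\fb{F},G,\fb{H})]$, so formal inversion yields a solution of \eqref{S} with zero initial data (the latter follows from the vanishing of $\fb{U}$ for $t<0$ together with time continuity obtained from the regularity estimates).

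Next I would derive the quantitative estimate. The identities
\begin{align*}
e^{-\gamma t}\partial_t\fb{U}&=\mathscr{L}^{-1}_\gamma\bigl[\lambda\mathcal{U}(\lambda)(\cdots)\bigr],\qquad e^{-\gamma t}\Lambda_\gamma^{1/2}\nabla\fb{U}=\mathscr{L}^{-1}_\gamma\bigl[\lambda^{1/2}\nabla\mathcal{U}(\lambda)(\cdots)\bigr],
\end{align*}
and similar expressions for $\gamma\fb{U}$, $\nabla^2\fb{U}$, $\nabla\Theta$, reveal that each derivative on the left-hand side corresponds, via Laplace transform, to one of the operator families whose $\mathscr{R}$-boundedness on $\Sigma_{\varepsilon,\gamma_0}$ is guaranteed by Theorem \ref{Thm:Rbdd} (and the Remark for $\widetilde{\mathcal{P}}$). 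Combined with the fact that $(\tau\partial_\tau)$-differentiation in the symbol corresponds, up to harmless factors, to the Mikhlin-type hypothesis in Weis' theorem, I would apply \cite[Theorem 3.4]{Wei01MA} on the UMD space $L_q(\Omega)$ to conclude that each of these compositions defines a bounded Fourier multiplier from $L_p(\mathbb{R};L_q(\Omega))$ into itself, with bound controlled by the $\mathscr{R}$-bound. This produces exactly the stated estimate, with the $\partial_tG$-term on the right-hand side accounting for the $\widehat{W}^{-1}_q$-component $\lambda\mathscr{L}[G]=\mathscr{L}[\partial_tG]$ of the input.

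For uniqueness, if $(\fb{U},\Theta)$ of class \eqref{eq:MRTsol} solves \eqref{S} with zero data, taking the Laplace transform in $t$ (legitimate because $e^{-\gamma t}\fb{U}\in L_p(\mathbb{R};W^2_q)$ etc.) yields a pair that satisfies \eqref{SR} with $\lambda=\gamma+i\tau$ and zero right-hand side for almost every $\tau\in\mathbb{R}$; the uniqueness assertion in Theorem \ref{Thm:Rbdd} (a) then forces $\mathscr{L}[\fb{U}](\lambda)=0$ and $\nabla\mathscr{L}[\Theta](\lambda)=0$ for such $\lambda$, and Laplace inversion gives $(\fb{U},\Theta)=(0,0)$ up to a function of $t$ alone in $\Theta$, which is absorbed by the class $\widehat{W}^1_q(\Omega)$.

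The main obstacle I anticipate is not the multiplier step itself but the careful handling of the homogeneous pressure space $\widehat{W}^1_q(\Omega)$ and the dual $\widehat{W}^{-1}_q(\Omega)$: one must ensure that the Laplace transform and its inverse act meaningfully on $\widehat{W}^{-1}_q$-valued distributions, and that $\mathcal{U}(\lambda)$ and $\widetilde{\mathcal{P}}(\lambda)$ depend sufficiently smoothly in $\lambda$ so that the Mikhlin hypothesis is verified uniformly for $\gamma\geq\gamma_0$. The embedding $W^1_{p,0,\gamma_0}(\mathbb{R};\widehat{W}^{-1}_q)\cap L_{p,0,\gamma_0}(\mathbb{R};W^1_q)\hookrightarrow H^{1/2}_{p,0,\gamma_0}(\mathbb{R};L_q)$ noted in the paper, established via Weis' theorem in \cite[Appendix A]{Shiba15DE}, is what allows the term $\Lambda_\gamma^{1/2}G$ to appear legitimately on the right-hand side, and this is where I would borrow directly from \cite{Sai15MMAS}.
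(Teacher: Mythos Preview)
Your proposal is correct and follows exactly the approach the paper indicates, namely the Laplace-transform reduction to \eqref{SR} combined with Theorem \ref{Thm:Rbdd} and Weis' operator-valued Fourier multiplier theorem, which is the method of \cite[Theorem 2.1]{Sai15MMAS} to which the paper defers. One minor simplification: since only $\nabla\Theta$ appears in the estimate, you may use $\mathcal{P}(\lambda)$ directly rather than $\widetilde{\mathcal{P}}(\lambda)$, avoiding the extra $W^1_q$-input slot.
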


Using a fixed-point argument based on this theorem, 
we can prove the local well-posedness of the free boundary problem \eqref{NS}. 
We first derive a quasilinear problem in the fixed layer from \eqref{NS} and next introduce further notation. 
Since $\Omega(t)$ is unknown, 
we rewrite the equation \eqref{NS} in the Lagrange coordinates $y \in \Omega$ instead of the Euler ones $x \in \Omega(t)$ 
by using the Lagrange transform: 
\begin{gather} \notag
x=y + \int_0^t \fb{u}(y,s) \, ds \equiv \fb{X}_{\fb{u}}(y,t) \\
\fb{u}= (u_1(y,t),\cdots,u_N(y,t))=\fb{v}(\fb{X}_{\fb{u}}(y,t),t), \quad \theta(y,t)=\pi(\fb{X}_{\fb{u}}(y,t),t). 
\end{gather}
Then the pair $(\fb{u},\theta)$ obeys the following problem (cf. \cite[Appendix A]{ShibaShimi07DIE}): 
\begin{align} \label{LNS} 
\left\{
\begin{array}{ll}
\partial_t{\fb{u}}- \text{Div}\,{\fb{S}}(\fb{u},\theta) = \fb{f}(\fb{u}), \quad 
\text{div}_{}\,{\fb{u}}=g(\fb{u})=\text{div}\,\fb{g}(\fb{u}) & \text{in } \Omega\times(0,T), \\
\fb{S}(\fb{u},\theta)\nu= \fb{h}(\fb{u}) & \text{on } \partial\Omega\times(0,T), \\
\fb{u}|_{t=0}=\fb{v}_0 & \text{in } \Omega. 
\end{array}
\right.
\end{align}
Here, nonlinear terms 
$\fb{f}(\fb{u})=(f_1(\fb{u}),\cdots,f_N(\fb{u}))$, $g(\fb{u})$, $\fb{g}(\fb{u})=(g_1(\fb{u}),\cdots,g_N(\fb{u}))$ 
and $\fb{h}(\fb{u})=(h_1(\fb{u}),\cdots,h_N(\fb{u}))$ are given by
\begin{align} \notag
f_i(\fb{u}) &= \sum_j \fb{V}^1_{ij}\left( \int_0^t \nabla{\fb{u}}\,ds \right)\partial_tu_j 
+ \sum_{j,k,\ell}{\fb{V}}^2_{ijk\ell}\left( \int_0^t \nabla{\fb{u}}\,ds \right)\partial_\ell{\partial}_ku_j \\
&+ \sum_{j,k,\ell,m,n}{\fb{V}}^3_{ijk\ell mn}\left( \int_0^t \nabla{\fb{u}}\,ds \right) \int_0^t \partial_\ell{\partial}_ku_j\,ds \, \partial_nu_m, \\
g(\fb{u}) &= \sum_{j,k}{\fb{V}}^4_{jk}\left( \int_0^t \nabla{\fb{u}}\,ds \right)\partial_ku_j, \quad 
g_i(\fb{u}) = \sum_j{\fb{V}}^5_{ij}\left( \int_0^t \nabla{\fb{u}}\,ds \right)u_j, \\ 
h_i(\fb{u}) &= \sum_{j,k}{\fb{V}}^6_{ijk}\left( \int_0^t \nabla{\fb{u}}\,ds \right)\partial_ku_j 
\end{align}
with some polynomials $\fb{V}^1_{ij}$, $\fb{V}^2_{ijk\ell}$, $\fb{V}^3_{ijk\ell mn}$, $\fb{V}^4_{jk}$, $\fb{V}^5_{ij}$ and $\fb{V}^6_{ijk}$ such that 
\begin{align}
\fb{V}^1_{ij}(\fb{O}), \ \fb{V}^2_{ijk\ell}(\fb{O}), \ \fb{V}^4_{jk}(\fb{O}), \ \fb{V}^5_{ij}(\fb{O}), \ \fb{V}^6_{ijk}(\fb{O})=0.  
\end{align} 

As the linearized problem associated with \eqref{LNS}, 
we have to study the Stokes initial value problem
\begin{align} \label{Sv0} 
\left\{
\begin{array}{ll}
\partial_t{\fb{u}}- \text{Div}_{}\,{\fb{S}}(\fb{u},\theta) = \fb{F}, \quad \text{div}_{}\,{\fb{u}}=G & \text{in } \Omega\times(0,\infty), \\
\fb{S}(\fb{u},\theta)\nu= \fb{H}& \text{on } \partial\Omega\times(0,\infty), \\
\fb{u}|_{t=0}=\fb{v}_0 & \text{in } \Omega. 
\end{array}
\right.
\end{align}
By virtue of Theorem \ref{Thm:MR} for \eqref{S} with zero initial condition, 
it suffices to solve the case where $(\fb{F},G,\fb{H})=(0,0,0)$. 
To show the generation of an analytic semigroup, we follow the ideas in \cite[Section 4]{GruSol91MS}, 
see also \cite[p. 159, 160]{ShibaShimi08RAM}. 
By applying $\text{div}$ to the first equation 
and by taking the $N$-th component of the boundary condition in \eqref{Sv0} with $(\fb{F},G,\fb{H})=(0,0,0)$, 
we have 
\begin{align} \label{pres}
\left\{\begin{array}{ll}
\Delta{\theta}=0 & \text{in } \Omega, \\ 
\theta=2\mu{\partial}_Nu_N-\text{div}_{}\,{\fb{u}}& \text{on } \partial\Omega 
\end{array}\right.
\end{align}
since $\text{div}_{}\,{\fb{u}}=0$. 
Set $\tilde{\theta}=\theta-(2\mu{\partial}_Nu_N-\text{div}_{}\,{\fb{u}})$. Then we get 
\begin{align} \label{pres2}
\left\{\begin{array}{ll}
\Delta\tilde{\theta}=-\text{div}_{}\,\nabla(2\mu{\partial}_Nu_N-\text{div}_{}\,{\fb{u}}) & \text{in } \Omega, \\ 
\tilde{\theta}=0 & \text{on } \partial\Omega, 
\end{array}\right.
\end{align}
whose weak formulation is given by \eqref{WDNP} 
with $\fb{f}=-\nabla(2\mu{\partial}_Nu_N-\text{div}_{}\,{\fb{u}})$. 
By use of the solution operator $\Pi_0$ obtained in Proposition \ref{Prop:USWDNP},  
the solution $\theta$ to \eqref{pres} is given by 
\begin{align}
&\theta=\Pi{\fb{u}}:=(2\mu{\partial}_Nu_N-\text{div}_{}\,{\fb{u}})-\Pi_0(\nabla(2\mu{\partial}_Nu_N-\text{div}_{}\,{\fb{u}})), \\ 
&\Pi:W^2_q(\Omega)\to W^1_q(\Omega). 
\end{align}
In this way, the problem \eqref{Sv0} with $(\fb{F},G,\fb{H})=(0,0,0)$ is reduced to 
\begin{align} \label{RSv0} 
\left\{
\begin{array}{ll}
\partial_t{\fb{u}}- \text{Div}_{}\,{\fb{S}}(\fb{u},\Pi(\fb{u})) = 0, \quad \text{div}_{}\,{\fb{u}}=0 & \text{in } \Omega\times(0,\infty), \\
\fb{S}(\fb{u},\Pi(\fb{u}))\nu= 0 & \text{on } \partial\Omega\times(0,\infty), \\
\fb{u}|_{t=0}=\fb{v}_0 & \text{in } \Omega. 
\end{array}
\right.
\end{align}
Note that the second equation $\text{div}_{}\,{\fb{u}}=0$ can be recovered if $\fb{v}_0$ is taken from  
\begin{align}
J_q(\Omega) = \{ \fb{u}\in L_q(\Omega)^N \mid \text{div}_{}\,{\fb{u}}=0\}, 
\end{align}
by the uniqueness of solutions to the initial value problem 
for the heat equation subject to the Dirichlet boundary condition, 
which $\text{div}_{}\,{\fb{u}}$ obeys, see \cite[p. 243]{GruSol91MS}. 
We then define the Stokes operator $A_q$ by
\begin{align*} 
D(A_q)=\{ \fb{u}\in J_q(\Omega)\cap{W^2_q(\Omega)}^N \mid \fb{S}(\fb{u},\Pi(\fb{u}))\nu=0 \text{ on } \partial\Omega \}, \ 
A_q{\fb{u}}= -\text{Div}_{}\,{\fb{S}}(\fb{u},\Pi(\fb{u})). 
\end{align*}

Then the system \eqref{RSv0} is formulated as   
\begin{align}
\partial_t{\fb{u}}+A_q{\fb{u}}=0, \quad \fb{u}|_{t=0}=\fb{v}_0. 
\end{align}
By Proposition \ref{Prop:RE} and by the argument in \cite[Lemma 4.4]{Sai18DE} and \cite[Lemma 3.7]{ShibaShimi08RAM}, 
we get the following proposition. 
\begin{Prop} \label{Prop:GeneSG}
The operator $-A_q$ generates an analytic semigroup $\{e^{-tA_q}\}_{t\geq0}$ of class $C^0$ on $J_q(\Omega)$ for $1<q<\infty$. 
\end{Prop}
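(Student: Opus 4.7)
The plan is to verify the classical sectorial-generator criterion: for $-A_q$ to generate a bounded analytic $C^0$-semigroup on $J_q(\Omega)$, it suffices to show that $A_q$ is closed, densely defined, and that its resolvent set contains $\Sigma_{\varepsilon,\gamma_0}$ (for some $\varepsilon<\pi/2$ and $\gamma_0>0$) with $\|(\lambda I+A_q)^{-1}\|_{\mathcal{L}(J_q(\Omega))}\le C/|\lambda|$ there. All of this should follow from Proposition \ref{Prop:RE} once the abstract operator $A_q$ is matched with the generalized Stokes resolvent problem \eqref{SR}.

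First I would construct the resolvent. Given $\mathbf{f}\in J_q(\Omega)$ and $\lambda\in\Sigma_{\varepsilon,\gamma_0}$, Theorem \ref{Thm:Rbdd}(a) applied with data $(\mathbf{f},0,0)$ produces a unique pair $(\mathbf{u},\theta)\in W^2_q(\Omega)^N\times \widehat{W}^1_q(\Omega)$ solving \eqref{SR}. Since $g=0$, one has $\mathrm{div}\,\mathbf{u}=0$, so $\mathbf{u}\in J_q(\Omega)$, and the boundary condition $\mathbf{S}(\mathbf{u},\theta)\nu=0$ is already built in. To show $\mathbf{u}\in D(A_q)$ I must identify $\theta$ with $\Pi\mathbf{u}$: taking the divergence of the momentum equation, using $\mathbf{f}\in J_q$ and $\mathrm{div}\,\mathbf{u}=0$, yields $\Delta\theta=0$ in $\Omega$; taking the normal component of $\mathbf{S}(\mathbf{u},\theta)\nu=0$ with $\nu=\pm e_N$ on $\partial\Omega$ gives $\theta=2\mu\partial_N u_N$ on $\partial\Omega$, which for divergence-free $\mathbf{u}$ coincides with $2\mu\partial_N u_N-\mathrm{div}\,\mathbf{u}$. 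Since $\Pi\mathbf{u}$ is precisely the solution of the same Dirichlet-for-harmonic problem constructed via the decomposition \eqref{pres}--\eqref{pres2} and the unique solvability of the weak Dirichlet problem \eqref{WDNP} (Proposition \ref{Prop:USWDNP}), the uniqueness of the Poisson equation forces $\theta=\Pi\mathbf{u}$. Hence $(\lambda I+A_q)\mathbf{u}=\mathbf{f}$; the converse direction together with the uniqueness clause of Theorem \ref{Thm:Rbdd}(a) shows $\lambda I+A_q\colon D(A_q)\to J_q(\Omega)$ is bijective.

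The resolvent bound is then immediate from Proposition \ref{Prop:RE} with $g=0$, $\mathbf{h}=0$:
\begin{equation*}
|\lambda|\,\|\mathbf{u}\|_{L_q(\Omega)} + |\lambda|^{1/2}\|\nabla\mathbf{u}\|_{L_q(\Omega)} + \|\nabla^2\mathbf{u}\|_{L_q(\Omega)} + \|\nabla\theta\|_{L_q(\Omega)}\le C\|\mathbf{f}\|_{L_q(\Omega)},
\end{equation*}
so in particular $\|(\lambda I+A_q)^{-1}\|_{\mathcal{L}(J_q(\Omega))}\le C/|\lambda|$ on $\Sigma_{\varepsilon,\gamma_0}$, and closedness of $A_q$ follows from the existence of the resolvent. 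Density of $D(A_q)$ in $J_q(\Omega)$ is obtained from the standard argument $\lambda(\lambda I+A_q)^{-1}\mathbf{f}\to\mathbf{f}$ as $\lambda\to\infty$ along the real axis (first on a convenient dense class, then extended using the uniform resolvent bound), exactly as in \cite[Lemma 4.4]{Sai18DE} and \cite[Lemma 3.7]{ShibaShimi08RAM}. Combining these three ingredients with the standard generation theorem for bounded analytic semigroups concludes the proof.

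The main obstacle I anticipate is the pressure identification $\theta=\Pi\mathbf{u}$: the operator $A_q$ is defined with the particular representative $\Pi\mathbf{u}$ for the pressure, whereas the solution formula of Theorem \ref{Thm:Rbdd} delivers $\theta$ in $\widehat{W}^1_q(\Omega)$ only up to uniqueness in that class. Reconciling the two requires precisely the weak Dirichlet problem solvability \eqref{WDNP} furnished by Proposition \ref{Prop:USWDNP}, and this identification is what makes the whole argument go through.
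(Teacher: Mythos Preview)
Your proposal is correct and follows exactly the approach indicated in the paper, which simply invokes Proposition~\ref{Prop:RE} together with the arguments of \cite[Lemma~4.4]{Sai18DE} and \cite[Lemma~3.7]{ShibaShimi08RAM}; you have merely filled in the details of the pressure identification $\theta=\Pi\mathbf{u}$, which is precisely the computation carried out in \eqref{pres}--\eqref{pres2} just before the proposition.
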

Hence, $(e^{-tA_q}{\fb{v}}_0,\Pi(e^{-tA_q}{\fb{v}}_0))$ is a solution of \eqref{Sv0} with $(\fb{F},G,\fb{H})=(0,0,0)$. 
Let $(\fb{U},\Theta)$ be the solution obtained in Theorem \ref{Thm:MR}. 
Then $\fb{u}=e^{-tA_q}{\fb{v}}_0+\fb{U}$ and $\theta=\Pi(e^{-tA_q}{\fb{v}}_0)+\Theta$ solve the problem \eqref{Sv0}. 

Let $1<p,q<\infty$. We define the Besov space $B^{2(1-1/p)}_{q,p}(\Omega)$ by 
$B^{2(1-1/p)}_{q,p}(\Omega) = (L_q(\Omega),W^2_q(\Omega))_{1-1/p,p}$ 
by use of real interpolation functor $(\cdot,\cdot)_{1-1/p,p}$. 
Moreover, let $\mathcal{D}_{q,p}(\Omega) = (J_q(\Omega),D(A_q))_{1-1/p,p}$. 
The following theorem provides the local well-posedness for the nonlinear problem \eqref{LNS}. 
The proof may be omitted since similar arguments can be found in \cite{Shiba15DE,Sai18DE,ShibaShimi07DIE}. 
\begin{Thm} \label{Thm:LWP}
Let $2<p<\infty$ and $N<q<\infty$. 
For all $R>0$, there exists $T=T(R)>0$ such that 
for any initial data $\fb{v}_0 \in \mathcal{D}_{q,p}(\Omega)\subset B^{2(1-1/p)}_{q,p}(\Omega)^N$ 
with $\| \fb{v}_0 \|_{B^{2(1-1/p)}_{q,p}(\Omega)}\leq R$, 
the problem \eqref{LNS} admits a unique solution 
\begin{align}
\fb{u}\in W^1_p(0,T;L_q(\Omega)^N) \cap L_p(0,T;W^2_q(\Omega)^N) 
\end{align}
with some pressure term $\theta\in L_p(0,T;\widehat{W}^1_q(\Omega))$ 
satisfying the following estimate: 
\begin{align}
\| \partial_t{\fb{u}}\|_{L_p(0,T;L_q(\Omega))} + \| \fb{u}\|_{L_p(0,T;W^2_q(\Omega))} \leq M_0R 
\end{align}
with some constant $M_0$ independent of $T$ and $R$. 
\end{Thm}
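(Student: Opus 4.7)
The plan is to set up a standard contraction mapping argument on a closed ball in the maximal regularity class for $\fb{u}$, using Theorem \ref{Thm:MR} as the linear solver and the nonlinear data $(\fb{f}(\fb{u}), g(\fb{u}), \fb{h}(\fb{u}))$ as input. First I would split the unknown as $\fb{u}=\fb{u}_*+\fb{U}$, where $\fb{u}_*(t)=e^{-tA_q}\fb{v}_0$ is the semigroup solution to the homogeneous problem \eqref{RSv0} (which is meaningful by Proposition \ref{Prop:GeneSG} once we extend $\fb{v}_0\in\mathcal{D}_{q,p}(\Omega)$ suitably — here we use that $\mathcal{D}_{q,p}(\Omega)$ is the natural trace space so that $\fb{u}_*\in W^1_p(0,\infty;L_q)\cap L_p(0,\infty;W^2_q)$ with bound $\lesssim R$), and $\fb{U}$ is the new unknown with zero initial value. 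The paired pressure $\theta_*=\Pi(\fb{u}_*)$ combines with the new pressure $\Theta$ from Theorem \ref{Thm:MR}.

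Next I would define, for $L>0$ to be chosen large (depending on $R$) and $T>0$ small, the closed convex set
\begin{equation*}
\mathcal{I}_{T,L}=\{\fb{U}\in W^1_p(0,T;L_q(\Omega)^N)\cap L_p(0,T;W^2_q(\Omega)^N):\ \fb{U}|_{t=0}=0,\ \|\fb{U}\|_T\le L\},
\end{equation*}
with $\|\fb{U}\|_T=\|\partial_t\fb{U}\|_{L_p(0,T;L_q)}+\|\fb{U}\|_{L_p(0,T;W^2_q)}$, and construct a map $\Phi:\mathcal{I}_{T,L}\to\mathcal{I}_{T,L}$ by letting $\Phi(\fb{U})=\widetilde{\fb{U}}$ be the solution given by Theorem \ref{Thm:MR} with data
\begin{equation*}
\fb{F}=\fb{f}(\fb{u}_*+\fb{U}),\qquad G=g(\fb{u}_*+\fb{U})=\mathrm{div}\,\fb{g}(\fb{u}_*+\fb{U}),\qquad \fb{H}=\fb{h}(\fb{u}_*+\fb{U}),
\end{equation*}
extended by zero for $t<0$. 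The point is that because $2<p<\infty$ and $N<q<\infty$, the Sobolev embedding $W^1_q(\Omega)\hookrightarrow L_\infty(\Omega)$ holds and $W^1_p\hookrightarrow C^{1-1/p}$ in time, so that $\int_0^t\nabla\fb{u}\,ds$ is uniformly small (of order $T^{1-1/p}$) on $[0,T]$. Hence the polynomial coefficients $\fb{V}^1,\dots,\fb{V}^6$ evaluated at that argument are close to their values at $\fb{O}$, and the vanishing hypotheses on $\fb{V}^1,\fb{V}^2,\fb{V}^4,\fb{V}^5,\fb{V}^6$ at $\fb{O}$ provide the smallness factors needed to close the estimates, with the cubic term in $\fb{f}(\fb{u})$ handled similarly.

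From there I would derive, by direct application of Hölder in time combined with these embeddings, inequalities of the schematic form
\begin{equation*}
\|\fb{F}\|_{L_p(0,T;L_q)}+\|\fb{H}\|_{H^{1/2}_p\cap L_p(W^1_q)}+\|G\|_{W^1_p(\widehat{W}^{-1}_q)\cap L_p(W^1_q)}\le C T^{\alpha}(R+L)^2
\end{equation*}
for some $\alpha>0$, and an analogous Lipschitz bound for $\Phi(\fb{U}_1)-\Phi(\fb{U}_2)$ in terms of $\|\fb{U}_1-\fb{U}_2\|_T$ times $T^\alpha(R+L)$. Choosing $L=2M_0R$ and then $T=T(R)$ so small that $CT^\alpha(R+L)\le 1/2$, both self-mapping and contraction follow, and the Banach fixed point theorem yields the unique $\fb{u}$ with the asserted estimate; the pressure $\theta$ is recovered as $\Pi(\fb{u}_*)+\Theta$, with class $L_p(0,T;\widehat{W}^1_q(\Omega))$ inherited from Theorem \ref{Thm:MR}.

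The hard part, and the one I would expect to consume the bulk of a full write-up, is controlling the divergence data $G=g(\fb{u})$ in $W^1_p(0,T;\widehat{W}^{-1}_q(\Omega))$: time differentiation falls on the time-integrated $\int_0^t\nabla\fb{u}\,ds$ inside $\fb{V}^4$ and on the factor $\partial_k u_j$, and the resulting $\partial_t g(\fb{u})$ has no pointwise integrability better than $L_p(L_q)$, so one must exploit the representation $g(\fb{u})=\mathrm{div}\,\fb{g}(\fb{u})$ together with the definition of $\widehat{W}^{-1}_q$ as the dual of $\widehat{W}^1_{q',0}$, writing $(\partial_t g(\fb{u}),\varphi)=-(\partial_t\fb{g}(\fb{u}),\nabla\varphi)$ and then using $\fb{V}^5(\fb{O})=0$ to secure the smallness factor. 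Once this negative-order norm is under control, the remaining norms, the Lipschitz estimates for differences $\fb{V}^j(\int_0^t\nabla\fb{u}_1)-\fb{V}^j(\int_0^t\nabla\fb{u}_2)$, and the closing of the fixed point are routine and parallel to \cite{Shiba15DE,Sai18DE,ShibaShimi07DIE}.
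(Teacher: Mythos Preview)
Your proposal is correct and follows precisely the standard route the paper has in mind: the paper itself omits the proof of Theorem~\ref{Thm:LWP} entirely, stating only that ``similar arguments can be found in \cite{Shiba15DE,Sai18DE,ShibaShimi07DIE},'' and your outline (splitting off the semigroup part $e^{-tA_q}\fb{v}_0$, applying Theorem~\ref{Thm:MR} to the remainder, and running a contraction on a ball in the maximal regularity class using the smallness factor $T^\alpha$ together with the embeddings available for $2<p<\infty$, $N<q<\infty$) is exactly the scheme carried out in those references. Your identification of the $W^1_p(\widehat{W}^{-1}_q)$ estimate for $G=g(\fb{u})=\mathrm{div}\,\fb{g}(\fb{u})$ via duality as the most delicate point is also consistent with how those papers proceed.
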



\section{Reduction to the problem only with boundary data} \label{reduce}
In this section, we reduce the problem \eqref{SR} to the case where $\fb{f}=0$ and $g=0$, 
and the main theorem to the corresponding theorem.  

We begin with the following properties of $\mathscr{R}$-bounded families.  

\begin{Lem}[{\cite[Proposition 3.4]{DenHiePru03MAMS}}] 
\label{Lem:Rbddprop}
Let $X$, $Y$ and $Z$ be Banach spaces. 
\begin{itemize}
\item[i)] Given $T\in{\mathcal{L}}(X,Y)$, the singleton $\{T\}$ is $\mathscr{R}$-bounded in $\mathcal{L}(X,Y)$. 
\item[ii)] Let $\mathcal{S}$ and $\mathcal{T}$ be $\mathscr{R}$-bounded families on $\mathcal{L}(X,Y)$. 
Then $\mathcal{S}+\mathcal{T}=\{ S+T \mid S \in \mathcal{S}, \ T \in \mathcal{T} \}$ is also $\mathscr{R}$-bounded on $\mathcal{L}(X,Y)$ and
\begin{align}
\mathscr{R}_{\mathcal{L}(X,Y)}(\mathcal{S}+\mathcal{T}) \leq \mathscr{R}_{\mathcal{L}(X,Y)}(\mathcal{S}) + \mathscr{R}_{\mathcal{L}(X,Y)}(\mathcal{T}). 
\end{align} 
\item[iii)] Let $\mathcal{S}$ and $\mathcal{T}$ be $\mathscr{R}$-bounded families on $\mathcal{L}(X,Y)$ and $\mathcal{L}(Y,Z)$, respectively. 
Then $\mathcal{T}{\mathcal{S}}=\{ TS \mid T \in \mathcal{T}, \ S \in \mathcal{S} \}$ is $\mathscr{R}$-bounded on $\mathcal{L}(X,Z)$ and
\begin{align}
\mathscr{R}_{\mathcal{L}(X,Z)}(\mathcal{T}{\mathcal{S}}) \leq \mathscr{R}_{\mathcal{L}(Y,Z)}(\mathcal{T})\mathscr{R}_{\mathcal{L}(X,Y)}(\mathcal{S}). 
\end{align} 
\end{itemize} 
\end{Lem}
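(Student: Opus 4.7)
The plan is to verify each of the three clauses directly from Definition \ref{Def:Rbdd}, fixing an arbitrary exponent $p\in[1,\infty)$ (by Remark \ref{Rem:Rbdd} (a), the particular choice is immaterial). All three statements reduce to manipulations of the Rademacher average $\bigl(\int_0^1 \bigl\|\sum_{j=1}^m r_j(u) z_j\bigr\|^p\,du\bigr)^{1/p}$ for appropriately chosen vectors $z_j$ in the relevant Banach space, so no deep inequality is needed beyond the triangle inequality in $L_p((0,1);X)$, $L_p((0,1);Y)$, or $L_p((0,1);Z)$.

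For (i), I would take $T_j=T$ for every $j$; by linearity, $\sum_{j=1}^m r_j(u) T x_j = T\bigl(\sum_{j=1}^m r_j(u) x_j\bigr)$ pointwise in $u$, and the usual operator-norm estimate yields $\mathscr{R}_{\mathcal{L}(X,Y)}(\{T\})\le \|T\|_{\mathcal{L}(X,Y)}$. For (ii), I would split $\sum r_j(S_j+T_j)x_j = \sum r_j S_j x_j + \sum r_j T_j x_j$ and apply the triangle inequality in $L_p((0,1);Y)$; the two resulting terms are controlled, respectively, by $\mathscr{R}_{\mathcal{L}(X,Y)}(\mathcal{S})$ and $\mathscr{R}_{\mathcal{L}(X,Y)}(\mathcal{T})$ times the common Rademacher average of $\{x_j\}_{j=1}^m$ in $X$, so that the claimed additivity of the bound drops out.

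The only clause requiring a little more care is (iii), and it is a two-stage application of \eqref{eq:DefRbdd}. I would introduce the intermediate vectors $y_j:=S_j x_j\in Y$ and view the Rademacher sum $\sum r_j(u) T_j S_j x_j = \sum r_j(u) T_j y_j$. Applying the $\mathscr{R}$-boundedness of $\mathcal{T}$ first gives
\begin{align*}
\Bigl(\int_0^1 \bigl\|\sum_j r_j(u) T_j S_j x_j\bigr\|_Z^p\,du\Bigr)^{1/p}
\le \mathscr{R}_{\mathcal{L}(Y,Z)}(\mathcal{T})\,\Bigl(\int_0^1 \bigl\|\sum_j r_j(u) S_j x_j\bigr\|_Y^p\,du\Bigr)^{1/p}.
\end{align*}
Applying the $\mathscr{R}$-boundedness of $\mathcal{S}$ to the right-hand side then yields the product bound $\mathscr{R}_{\mathcal{L}(Y,Z)}(\mathcal{T})\mathscr{R}_{\mathcal{L}(X,Y)}(\mathcal{S})$. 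The only subtlety, and not really an obstacle, is that the \emph{same} sequence $\{r_j\}$ of Rademacher variables is reused in both steps; this is permitted because \eqref{eq:DefRbdd} holds for every such sequence and every finite choice of vectors, so we simply condition on $u\in(0,1)$ and regard $\{y_j\}$ as the input in $Y$. Taking infimum over the constants completes the estimate, and the whole argument is essentially a bookkeeping exercise exploiting linearity and the invariance of Rademacher averages.
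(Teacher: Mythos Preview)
Your proof is correct and is the standard argument for these elementary properties of $\mathscr{R}$-bounded families. Note that the paper does not actually prove this lemma; it is merely quoted from \cite[Proposition 3.4]{DenHiePru03MAMS}, so there is no ``paper's own proof'' to compare against. Your argument is essentially the one given in that reference.
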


First, we reduce \eqref{SR} to the case $g=0$. 
Let $\varphi \in C^\infty(\mathbb{R})$ be a cut-off function satisfying 
\begin{align}
0\leq\varphi(x_N)\leq1, \quad \varphi(x_N) = \left\{
\begin{array}{ll}
0, & |x_N| \leq 1/3,\\
1, & |x_N| \geq 2/3,
\end{array}
\right.
\end{align}
and set 
\begin{align} \label{eq:varphi0d}
\varphi_\delta(x_N) = \varphi(x_N/\delta), \quad \varphi_0(x_N) = 1-\varphi_\delta(x_N). 
\end{align}
Then, given function $f$ defined on $\Omega$, we define 
\begin{align} \label{eq:oddext}
\begin{aligned}
f_0^o(x)&= \left\{ 
\begin{aligned}
&\varphi_0(x_N)f(x',x_N), && x_N>0, \\ 
&-\varphi_0(-x_N)f(x',-x_N), && x_N<0,
\end{aligned}
\right. \\ 
f_\delta^o(x)&= \left\{ 
\begin{aligned}
&\varphi_\delta(x_N)f(x',x_N), && x_N<\delta, \\
&-\varphi_\delta(2\delta-x_N)f(x',2\delta-x_N), && x_N>\delta.
\end{aligned}
\right. 
\end{aligned}
\end{align}
The following lemma gives a solution to the divergence equation. 

\begin{Lem}[{\cite[Theorem 1.2]{Sai15MMAS}}] 
\label{Lem:diveq}
We define the operator $\mathcal{V}_0g=(\mathcal{V}_{01}g,\cdots,\mathcal{V}_{0N}g)^\mathsf{T}$ by 
\begin{align} \label{eq:K0def}
\mathcal{V}_{0J}g(x) = -\mathscr{F}_\xi^{-1} \left[ \frac{i\xi_J}{|\xi|^2} \mathscr{F}_x[g^\ast](\xi) \right](x) \quad (J=1,\cdots,N)
\end{align}
where $g^\ast(x)=g_0^o(x)+g_\delta^o(x)\in{\widehat{W}^{-1}_q(\mathbb{R}^N)}\cap{W^1_q(\mathbb{R}^N)}$. 
Then, for $1<q<\infty$ and $g \in \widehat{W}^{-1}_q(\Omega)\cap W^1_q(\Omega)$,  
the operator $\mathcal{V}_0 \in \mathcal{L}(\widehat{W}^{-1}_q(\Omega)\cap W^1_q(\Omega), W^2_q(\Omega)^N)$ 
satisfies the following properties: 
\begin{itemize}
\item[a)] 
$\fb{v}=\mathcal{V}_0g$ solves the divergence equation $\text{div}_{}\,{\fb{v}}=g$. 
\item[b)] 
We have the estimates 
\begin{align}
\| \mathcal{V}_0g \|_{L_q(\Omega)} \leq C\| g \|_{\widehat{W}^{-1}_q(\Omega)}, \quad \| \nabla \mathcal{V}_0g \|_{L_q(\Omega)} \leq C \| g \|_{L_q(\Omega)}, \\ 
\| \nabla^2 \mathcal{V}_0g \|_{L_q(\Omega)} \leq C \| \nabla g \|_{L_q(\Omega)}. 
\end{align}
\end{itemize}
\end{Lem}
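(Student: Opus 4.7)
The operator $\mathcal{V}_0$ is built from the Fourier multiplier $i\xi_J/|\xi|^2$, of Riesz-type order $-1$, applied to the extension $g^\ast=g_0^o+g_\delta^o$ of $g$ to all of $\mathbb{R}^N$. The plan is therefore to first establish the relevant mapping properties of the extension, then verify the divergence identity by a direct Fourier computation, and finally deduce the three estimates by combining the Mihlin multiplier theorem on $\mathbb{R}^N$ with a duality argument for the negative-order bound.

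The starting observation is that $\varphi_0+\varphi_\delta=1$ on $[0,\delta]$, so $g^\ast|_\Omega=g$. Because each cut-off localizes its piece away from the opposite face and the odd reflection cancels constants in $x_N$, the map $g\mapsto g^\ast$ is bounded $W^1_q(\Omega)\to W^1_q(\mathbb{R}^N)$ and, more delicately, $\widehat{W}^{-1}_q(\Omega)\to\widehat{W}^{-1}_q(\mathbb{R}^N)$. I would check the latter point by testing $g^\ast$ against $\varphi\in\widehat{W}^1_{q',0}(\mathbb{R}^N)$ and, after separating the two odd pieces and performing the changes of variable $x_N\mapsto -x_N$ and $x_N\mapsto 2\delta-x_N$, reducing to a pairing of $g$ with an element of $\widehat{W}^1_{q',0}(\Omega)$ of comparable norm.

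Part (a) is then immediate:
\begin{align*}
\text{div}_{}\,\mathcal{V}_0 g=-\mathscr{F}_\xi^{-1}\Bigl[\sum_{J=1}^N\frac{(i\xi_J)(i\xi_J)}{|\xi|^2}\mathscr{F}_x[g^\ast]\Bigr]=\mathscr{F}_\xi^{-1}[\mathscr{F}_x[g^\ast]]=g^\ast=g\text{ on }\Omega.
\end{align*}
For the gradient and Hessian bounds in (b), the identities
\begin{align*}
\partial_k\mathcal{V}_{0J}g=\mathscr{F}_\xi^{-1}\Bigl[\frac{\xi_J\xi_k}{|\xi|^2}\mathscr{F}_x[g^\ast]\Bigr],\qquad
\partial_k\partial_\ell\mathcal{V}_{0J}g=\mathscr{F}_\xi^{-1}\Bigl[\frac{\xi_J\xi_k}{|\xi|^2}\mathscr{F}_x[\partial_\ell g^\ast]\Bigr]
\end{align*}
together with the Mihlin--H\"{o}rmander boundedness of $\xi_J\xi_k/|\xi|^2$ on $L_q(\mathbb{R}^N)$ and the $L_q$- and $W^1_q$-stability of the extension give the claimed estimates.

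The main obstacle is the lowest-order bound $\|\mathcal{V}_0 g\|_{L_q(\Omega)}\leq C\|g\|_{\widehat{W}^{-1}_q(\Omega)}$, since $i\xi_J/|\xi|^2$ fails the Mihlin condition. I would handle this by duality: for $\psi\in C_0^\infty(\Omega)$ extended by zero to $\tilde\psi\in C_0^\infty(\mathbb{R}^N)$, Parseval rewrites $(\mathcal{V}_{0J}g,\psi)_\Omega$ as a pairing of $g^\ast$ on $\mathbb{R}^N$ with $\mathscr{F}_\xi^{-1}\bigl[(i\xi_J/|\xi|^2)\mathscr{F}_x[\tilde\psi]\bigr]$; the latter lies in $\widehat{W}^1_{q',0}(\mathbb{R}^N)$ because its gradient $\mathscr{F}_\xi^{-1}\bigl[(\xi_J\xi_k/|\xi|^2)\mathscr{F}_x[\tilde\psi]\bigr]$ belongs to $L_{q'}(\mathbb{R}^N)$ by the Mihlin theorem once more. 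Taking the supremum over $\psi$ with $\|\psi\|_{L_{q'}(\Omega)}=1$ and invoking the $\widehat{W}^{-1}_q$-extension bound from the second paragraph completes the proof.
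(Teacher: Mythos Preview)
The paper does not prove this lemma; it is quoted from \cite{Sai15MMAS}. Your outline is the standard one and is essentially correct, but there is one genuine gap in your Hessian step.

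You assert ``$W^1_q$-stability of the extension'' and then write
\[
\partial_k\partial_\ell\mathcal{V}_{0J}g=\mathscr{F}_\xi^{-1}\Bigl[\frac{\xi_J\xi_k}{|\xi|^2}\mathscr{F}_x[\partial_\ell g^\ast]\Bigr].
\]
But the map $g\mapsto g^\ast=g_0^o+g_\delta^o$ is \emph{not} bounded $W^1_q(\Omega)\to W^1_q(\mathbb{R}^N)$: since $\varphi_0(0)=1$, the odd reflection at $x_N=0$ produces a jump $[g_0^o](x',0)=2g(x',0)$, so the distributional derivative $\partial_N g^\ast$ carries a surface measure unless $g|_{\partial\Omega}=0$. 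Thus the displayed identity fails for $\ell=N$, and when $J=k=\ell=N$ there is no index you can shift the derivative onto.

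The fix is already visible in the paper, at \eqref{eq:rwDNN}: for tangential indices your argument works verbatim because $\partial_{j'}g^\ast=(\partial_{j'}g)^\ast\in L_q(\mathbb{R}^N)$ for $j'<N$; the remaining case $\partial_N^2\mathcal{V}_{0N}g$ is handled on $\Omega$ (not on $\mathbb{R}^N$) by the divergence identity you proved in part~(a),
\[
\partial_N\mathcal{V}_{0N}g
=\text{div}\,\mathcal{V}_0g-\sum_{j'=1}^{N-1}\partial_{j'}\mathcal{V}_{0j'}g
= g-\sum_{j'=1}^{N-1}\partial_{j'}\mathcal{V}_{0j'}g
\quad\text{in }\Omega,
\]
after which a further $\partial_N$ hits only $g$ and tangential second derivatives of $\mathcal{V}_0g$, both of which you already control. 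With this correction, the rest of your plan (Mihlin for the gradient/Hessian, duality for the $\widehat W^{-1}_q$ bound) goes through.
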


Set $\fb{u}=\mathcal{V}_0g+\fb{v}$ in \eqref{SR}. We have 
\begin{align} \label{eq:SRdfh}
\left\{
\begin{array}{ll}
\lambda \fb{v}- \text{Div}_{}\,{\fb{S}}(\fb{v},\theta) = \tilde{\fb{f}}, \quad \text{div}_{}\,{\fb{v}}= 0 & \text{in } \Omega, \\
\fb{S}(\fb{v},\theta)\nu= \fb{h}- \mu{\fb{D}}(\mathcal{V}_0g)\nu& \text{on } \partial\Omega,
\end{array}
\right.
\end{align}
where 
\begin{align} \label{eq:Defft}
\tilde{\fb{f}}= \fb{f}- \lambda \mathcal{V}_0g + \text{Div}_{}\,(\mu{\fb{D}}(\mathcal{V}_0g)). 
\end{align}

Next, we reduce the system \eqref{eq:SRdfh} to the case where the data are only on the boundary. 
For this purpose, given function $\fb{f}$ defined on $\Omega$, we consider the problem in the whole space 
\begin{align} \label{eq:SRw}
\lambda{\fb{v}}-\text{Div}_{}\,{\fb{S}}(\fb{v},\pi)=E{\fb{f}}, \quad \text{div}_{}\,{\fb{v}}=0 \quad \text{in } \mathbb{R}^N. 
\end{align}
Here, $E{\fb{f}}=(\bar{f}_1,\cdots,\bar{f}_N)$ 
with $\bar{f}_j=f_{j0}^e+f_{j\delta}^o$ ($j=1,\cdots,N-1$) and $\bar{f}_N=f_{N0}^o+f_{N\delta}^e$, 
where we have set, in addition to \eqref{eq:oddext}, 
\begin{align} 
f_0^e(x)&= \left\{ 
\begin{aligned}
&\varphi_0(x_N)f(x',x_N) && x_N>0, \\ 
&\varphi_0(-x_N)f(x',-x_N) && x_N<0,
\end{aligned}
\right. \\ 
f_\delta^e(x)&= \left\{ 
\begin{aligned}
&\varphi_\delta(x_N)f(x',x_N) && x_N<\delta, \\
&\varphi_\delta(2\delta-x_N)f(x',2\delta-x_N) && x_N>\delta.
\end{aligned}
\right. 
\end{align}
The following lemma on the $\mathscr{R}$-boundedness of the solution operator families for \eqref{eq:SRw} 
is proved by Saito \cite{Sai15MMAS}. 
\begin{Lem}[{\cite[Lemma 2.7]{Sai15MMAS}}] 
\label{Lem:Rbddha}
For all $\lambda\in{\mathbb{C}\setminus(-\infty,0]}$, there exist operators $S_0(\lambda)=(S_{01}(\lambda),\cdots,S_{0N}(\lambda))$ and $T_0$ 
satisfying $S_0(\lambda)\in{\mathcal{L}}(L_q(\Omega)^N,W^2_q(\Omega)^N)$ and $T_0\in{\mathcal{L}}(L_q(\Omega)^N,W^1_q(\Omega))$ for $1<q<\infty$ 
such that the following assertions hold: 
\begin{itemize}
\item[a)] For any $\lambda \in \mathbb{C}\setminus(-\infty,0]$ and $\fb{f}\in L_q(\Omega)^N$, the pair 
\begin{align}
(\fb{v},\pi)=(S_0(\lambda){\fb{f}}, T_0{\fb{f}})\in{W^2_q(\Omega)}^N\times{W^1_q(\Omega)} 
\end{align}
is the restriction to $\Omega$ of a unique solution to \eqref{eq:SRw}. 
\item[b)] 
For any $1<q<\infty$, $0<\varepsilon<\pi/2$, $\ell=0,1$ and $1\leq m,n,J \leq N$, 
there hold
\begin{align}
\mathscr{R}_{\mathcal{L}(L_q(\Omega))}(\{ (\tau{\partial}_\tau)^\ell \lambda S_{0J}(\lambda) \mid \lambda\in{\Sigma_{\varepsilon,0}} \})&\leq C_{N,q,\varepsilon,\mu,\delta}, \\ 
\mathscr{R}_{\mathcal{L}(L_q(\Omega))}(\{ (\tau{\partial}_\tau)^\ell \gamma S_{0J}(\lambda) \mid \lambda\in{\Sigma_{\varepsilon,0}} \})&\leq C_{N,q,\varepsilon,\mu,\delta}, \\ 
\mathscr{R}_{\mathcal{L}(L_q(\Omega))}(\{ (\tau{\partial}_\tau)^\ell \lambda^{1/2} \partial_m S_{0J}(\lambda) \mid \lambda\in{\Sigma_{\varepsilon,0}} \})&\leq C_{N,q,\varepsilon,\mu,\delta}, \\ 
\mathscr{R}_{\mathcal{L}(L_q(\Omega))}(\{ (\tau{\partial}_\tau)^\ell \partial_m{\partial}_n S_{0J}(\lambda) \mid \lambda\in{\Sigma_{\varepsilon,0}} \})&\leq C_{N,q,\varepsilon,\mu,\delta}, \\ \label{eq:RbdT0}
\|T_0\|_{\mathcal{L}(L_q(\Omega)^N,L_q(\Omega))}+\|\nabla{T_0}\|_{\mathcal{L}(L_q(\Omega)^N)}&\leq C_{N,q,\mu,\delta}, 
\end{align}
where $\Sigma_{\varepsilon,0}$ is given by \eqref{eq:resoset} with $\gamma_0=0$, and $\lambda=\gamma+i\tau$. 
\end{itemize}
\end{Lem}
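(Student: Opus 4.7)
The natural approach is to solve \eqref{eq:SRw} explicitly by the Fourier transform on $\mathbb{R}^N$, so that $S_0(\lambda)$ and $T_0$ are given as Fourier multiplier operators composed with the extension $E$, and then to derive the $\mathscr{R}$-bounds from an $\mathscr{R}$-bounded Mikhlin-type multiplier theorem (cf.\ \cite[Theorem 3.25]{DenHiePru03MAMS}) combined with the composition rules of Lemma \ref{Lem:Rbddprop}.

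Since $\fb{v}$ is divergence-free, $\text{Div}\,\fb{S}(\fb{v},\pi)=\mu\Delta\fb{v}-\nabla\pi$, and \eqref{eq:SRw} reduces to
\begin{align*}
\lambda\fb{v}-\mu\Delta\fb{v}+\nabla\pi=E\fb{f},\qquad \text{div}\,\fb{v}=0\quad\text{in }\mathbb{R}^N.
\end{align*}
Taking the divergence yields $\Delta\pi=\text{div}\,E\fb{f}$, so the Fourier transform gives the explicit formulas
\begin{align*}
\widehat{\pi}(\xi)=-\frac{i\xi\cdot\widehat{E\fb{f}}(\xi)}{|\xi|^2},\qquad \widehat{\fb{v}}(\xi)=\frac{1}{\lambda+\mu|\xi|^2}\Bigl(\widehat{E\fb{f}}(\xi)-\frac{\xi\bigl(\xi\cdot\widehat{E\fb{f}}(\xi)\bigr)}{|\xi|^2}\Bigr),
\end{align*}
from which I would define $T_0\fb{f}:=\mathscr{F}_\xi^{-1}[\widehat\pi]\bigr|_\Omega$ and $S_0(\lambda)\fb{f}:=\mathscr{F}_\xi^{-1}[\widehat{\fb{v}}]\bigr|_\Omega$. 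Uniqueness in $\mathbb{R}^N$ is immediate because $\lambda+\mu|\xi|^2$ does not vanish on $\Sigma_{\varepsilon,0}\times\mathbb{R}^N$.

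To establish the $\mathscr{R}$-bounds, I would express each of $\lambda S_{0J}(\lambda)$, $\gamma S_{0J}(\lambda)$, $\lambda^{1/2}\partial_m S_{0J}(\lambda)$, and $\partial_m\partial_n S_{0J}(\lambda)$ as the composition of $E$ with the Fourier multiplier whose symbol is, for example,
\begin{align*}
\frac{\lambda}{\lambda+\mu|\xi|^2}\Bigl(\delta_{jJ}-\frac{\xi_j\xi_J}{|\xi|^2}\Bigr),\quad \frac{i\lambda^{1/2}\xi_m}{\lambda+\mu|\xi|^2}\Bigl(\delta_{jJ}-\frac{\xi_j\xi_J}{|\xi|^2}\Bigr),\quad \frac{-\xi_m\xi_n}{\lambda+\mu|\xi|^2}\Bigl(\delta_{jJ}-\frac{\xi_j\xi_J}{|\xi|^2}\Bigr).
\end{align*}
The sectorial lower bound $|\lambda+\mu|\xi|^2|\geq c_\varepsilon(|\lambda|+\mu|\xi|^2)$, valid for $\lambda\in\Sigma_{\varepsilon,0}$, implies that each such symbol $m(\xi,\lambda)$ and its derivative $\tau\partial_\tau m(\xi,\lambda)$ satisfy the Mikhlin-type estimates $|\xi^\alpha\partial_\xi^\alpha m(\xi,\lambda)|\leq C$ for all $\alpha\in\{0,1\}^N$, uniformly in $\lambda$. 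An appeal to the $\mathscr{R}$-bounded Mikhlin theorem then yields the $\mathscr{R}$-boundedness of the corresponding operator families on $L_q(\mathbb{R}^N)^N$; composing with $E$ (whose operator norm depends on $\delta$ via the cut-offs in \eqref{eq:varphi0d}) and restricting to $\Omega$ produces the desired estimates. The bound \eqref{eq:RbdT0} on $T_0$ follows from the classical Mikhlin theorem, since the symbols $\xi_j\xi_k/|\xi|^2$ are $\lambda$-independent Riesz-transform type.

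The principal technical point is to verify the Mikhlin estimates with constants independent of $\lambda\in\Sigma_{\varepsilon,0}$ including the regime $|\lambda|\to 0$, since $\gamma_0=0$ here. This rests on the sectorial lower bound above, which degenerates only at the single point $\lambda=0$, which is excluded from $\Sigma_{\varepsilon,0}$. Once the uniform symbol estimates are in place, translating them into $\mathscr{R}$-boundedness is a direct invocation of the operator-valued multiplier theorem, and Lemma \ref{Lem:Rbddprop} iii) then delivers the final composition with $E$.
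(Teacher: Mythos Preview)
The paper does not prove this lemma itself; it is quoted from Saito \cite[Lemma 2.7]{Sai15MMAS}. Your Fourier-multiplier approach is the natural one and is essentially how the result is obtained: the explicit symbols you write down for $\lambda S_{0J}$, $\lambda^{1/2}\partial_m S_{0J}$, $\partial_m\partial_n S_{0J}$ and for $\partial_k T_0$ all satisfy the Mikhlin condition uniformly in $\lambda\in\Sigma_{\varepsilon,0}$ thanks to the sectorial lower bound $|\lambda+\mu|\xi|^2|\ge c_\varepsilon(|\lambda|+\mu|\xi|^2)$, and the $\mathscr{R}$-bounded Mikhlin theorem then gives the first four estimates as well as the bound on $\nabla T_0$.

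There is, however, a genuine gap in your treatment of $\|T_0\|_{\mathcal{L}(L_q(\Omega)^N,L_q(\Omega))}$. The Riesz-type symbol $\xi_j\xi_k/|\xi|^2$ you invoke is the symbol of $\partial_k\pi$, not of $\pi$ itself; the symbol of $\pi$ is $-i\xi_j/|\xi|^2$, which is homogeneous of degree $-1$ and is \emph{not} an $L_q(\mathbb{R}^N)$ Fourier multiplier. Thus the classical Mikhlin theorem yields only $\nabla\pi\in L_q(\mathbb{R}^N)$, i.e.\ $\pi\in\widehat{W}^1_q(\mathbb{R}^N)$, and does not by itself give $\pi|_\Omega\in L_q(\Omega)$. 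This is exactly the distinction between $\widehat{W}^1_q$ and $W^1_q$ for the pressure, and on an unbounded domain it is not automatic.

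What rescues the situation is the geometry of the layer together with the specific extension $E$: the function $E\fb{f}$ is supported in a strip of width $O(\delta)$ in $x_N$, and $\Omega$ itself has width $\delta$. One clean way to close the argument is to observe that Poincar\'e's inequality on the layer gives $L_q(\Omega)\hookrightarrow \widehat{W}^{-1}_q(\Omega)$ with embedding constant $C_\delta$, so that an estimate of the type appearing in Lemma~\ref{Lem:diveq}~b), namely $\|\mathscr{F}_\xi^{-1}[i\xi_j|\xi|^{-2}\widehat{g}]\|_{L_q(\Omega)}\le C\|g\|_{\widehat{W}^{-1}_q(\Omega)}$, can be combined with $\|E\fb f\|_{\widehat{W}^{-1}_q}\le C_\delta\|E\fb f\|_{L_q}\le C_\delta\|\fb f\|_{L_q(\Omega)}$ to recover $\|\pi\|_{L_q(\Omega)}\le C_{q,\delta}\|\fb f\|_{L_q(\Omega)}$. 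Alternatively, one can argue as in \eqref{eq:0dpres}: once $\partial_N\pi\in L_q(\mathbb{R}^N)$ is known from the Riesz multiplier, the compact $x_N$-support of $E\fb f$ lets one fix a boundary value of $\pi$ and integrate in $x_N$ over the finite interval $(0,\delta)$. Either way, the $\delta$-dependence of the constant in \eqref{eq:RbdT0} enters precisely at this step; your purely Mikhlin argument on $\mathbb{R}^N$ cannot produce it.
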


Let $\fb{v}=S_0(\lambda){\tilde{\fb{f}}}+\fb{w}$ and $\theta=T_0{\tilde{\fb{f}}}+\pi$ in \eqref{eq:SRdfh}. We have
\begin{align} \label{eq:SRdh}
\left\{
\begin{array}{ll}
\lambda \fb{w}- \text{Div}_{}\,{\fb{S}}(\fb{w},\pi) = 0, \quad \text{div}_{}\,{\fb{w}}= 0 & \text{in } \Omega, \\
\fb{S}(\fb{w},\pi)\nu=\tilde{\fb{h}}\ & \text{on } \partial\Omega, 
\end{array}
\right.
\end{align}
where 
\begin{align} \label{eq:Defht}
\tilde{\fb{h}}=\fb{h}- \mu{\fb{D}}(\mathcal{V}_0g)\nu-\fb{S}(S_0(\lambda){\tilde{\fb{f}}},T_0{\tilde{\fb{f}}})\nu. 
\end{align}

Then, as justified later in this section, 
it suffices to prove the $\mathscr{R}$-boundedness of the solution operator families of \eqref{SR} with $\fb{f}=0$ and $g=0$: 
\begin{align} \label{eq:SRdh} 
\left\{
\begin{array}{ll}
\lambda \fb{u} - \text{Div}_{}\,{\fb{S}}(\fb{u},\theta) = 0, \quad \text{div}_{}\,\fb{u} = 0 & \text{in } \Omega, \\
\fb{S}(\fb{u},\theta)\nu= \fb{h} & \text{on } \partial\Omega. 
\end{array}
\right.
\end{align}

\begin{Thm} \label{Thm:Rbdddh}
For all $\lambda \in \mathbb{C}\setminus(-\infty,0]$, 
there exist operators $\mathcal{S}(\lambda)=(\mathcal{S}_1(\lambda),\cdots,\mathcal{S}_N(\lambda))$ and $\mathcal{T}(\lambda)$ 
(precisely, they are given by \eqref{eq:defSNP} and \eqref{eq:defSj}) satisfying 
$\mathcal{S}(\lambda)\in{\mathcal{L}}(L_q(\Omega)^{N+N^2}, W^2_q(\Omega)^N)$ and 
$\mathcal{T}(\lambda)\in{\mathcal{L}}(L_q(\Omega)^{N+N^2}, \widehat{W}^1_q(\Omega))$ for $1<q<\infty$ such that 
the following assertions hold: 
\begin{itemize}
\item[a)] 
For any $1<q<\infty$, $\lambda \in \mathbb{C}\setminus(-\infty,0]$ and $\fb{h}=(\fb{h}',h_N)=(h_1,\cdots,h_N) \in W^1_q(\Omega)^N$, 
the pair $(\fb{u},\theta) \in W^2_q(\Omega)^N \times \widehat{W}^1_q(\Omega)$ given by 
\begin{align} \label{eq:RbdddhSolform}
\fb{u}=\mathcal{S}(\lambda)(\lambda^{1/2}\fb{h}', h_N, \nabla \fb{h}), \quad 
\theta=\mathcal{T}(\lambda)(\lambda^{1/2}\fb{h}', h_N, \nabla \fb{h})  
\end{align}
is a solution of \eqref{eq:SRdh}. 
\item[b)] 
For any $1<q<\infty$, $0<\varepsilon<\pi/2$, $\gamma_0>0$, 
$\ell=0,1$ and $1\leq m,n,J\leq N$, there hold
\begin{align} \label{eq:Rbdddh} 
\begin{aligned}
\mathscr{R}_{\mathcal{L}(L_q(\Omega))}(\{ (\tau{\partial}_\tau)^\ell \lambda \mathcal{S}_J(\lambda) \mid \lambda\in{\Sigma_{\varepsilon,\gamma_0}} \})&\leq C_{N,q,\varepsilon,\gamma_0,\mu,\delta}, \\  
\mathscr{R}_{\mathcal{L}(L_q(\Omega))}(\{ (\tau{\partial}_\tau)^\ell \gamma \mathcal{S}_J(\lambda) \mid \lambda\in{\Sigma_{\varepsilon,\gamma_0}} \})&\leq C_{N,q,\varepsilon,\gamma_0,\mu,\delta}, \\ 
\mathscr{R}_{\mathcal{L}(L_q(\Omega))}(\{ (\tau{\partial}_\tau)^\ell \lambda^{1/2} \partial_m \mathcal{S}_J(\lambda) \mid \lambda\in{\Sigma_{\varepsilon,\gamma_0}} \})&\leq C_{N,q,\varepsilon,\gamma_0,\mu,\delta}, \\
\mathscr{R}_{\mathcal{L}(L_q(\Omega))}(\{ (\tau{\partial}_\tau)^\ell \partial_m{\partial}_n \mathcal{S}_J(\lambda) \mid \lambda\in{\Sigma_{\varepsilon,\gamma_0}} \})&\leq C_{N,q,\varepsilon,\gamma_0,\mu,\delta}, \\
\mathscr{R}_{\mathcal{L}(L_q(\Omega))}(\{ (\tau{\partial}_\tau)^\ell \partial_m \mathcal{T}(\lambda) \mid \lambda\in{\Sigma_{\varepsilon,\gamma_0}} \})&\leq C_{N,q,\varepsilon,\gamma_0,\mu,\delta}, 
\end{aligned}
\end{align}
where $\Sigma_{\varepsilon,\gamma_0}$ is given by \eqref{eq:resoset} and $\lambda=\gamma+i\tau$. 
\end{itemize}
\end{Thm}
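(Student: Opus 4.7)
The plan is to derive an explicit solution formula for \eqref{eq:SRdh} by applying the partial Fourier transform $\mathscr{F}_{x'}$ in the tangential variables, thereby reducing the problem to a system of ordinary differential equations in the normal variable $x_N\in(0,\delta)$ parameterized by $(\xi',\lambda)$. Taking the divergence of the first equation and using $\mathrm{div}\,\fb{u}=0$ shows that $\hat\theta$ satisfies $(|\xi'|^2-\partial_N^2)\hat\theta=0$, while each velocity component obeys a second-order ODE whose homogeneous solutions are linear combinations of $e^{\pm A x_N}$ with $A=\sqrt{|\xi'|^2+\lambda/\mu}$ and $e^{\pm B x_N}$ with $B=|\xi'|$. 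Substituting the general solutions into the boundary conditions $\fb{S}(\fb{u},\theta)\nu=\fb{h}$ at $x_N=0,\delta$ together with the divergence-free constraint yields a linear algebraic system $\fb{L}(\xi',\lambda)\fb{c}=\hat{\fb{H}}$ for the unknown coefficient vector. Inverting by Cramer's rule expresses every Fourier symbol of $\hat{\fb{u}}$ and $\hat\theta$ as a rational combination of $A$, $B$, $e^{-A\delta}$, $e^{-B\delta}$ divided by $\det\fb{L}$, and taking $\mathscr{F}_{\xi'}^{-1}$ defines the operators $\mathcal{S}(\lambda)$ and $\mathcal{T}(\lambda)$ claimed in \eqref{eq:RbdddhSolform}.

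To prove the $\mathscr{R}$-boundedness \eqref{eq:Rbdddh}, I would invoke the technical lemmas of Section~5 (in particular Lemma \ref{Lem:RbddVS}), which transfer symbol-class estimates of the form $|(\tau\partial_\tau)^\ell\partial_{\xi'}^{\alpha'}m(\xi',\lambda)|\leq C|\xi'|^{-|\alpha'|}$, uniformly in $\lambda\in\Sigma_{\varepsilon,\gamma_0}$ and in $x_N,y_N\in(0,\delta)$, into $\mathscr{R}$-bounds for the associated multiplier operators. In each of the five estimates in \eqref{eq:Rbdddh}, the prefactors $\lambda$, $\gamma$, $\lambda^{1/2}\partial_m$, $\partial_m\partial_n$ and $\partial_m$ are absorbed into the symbol by straightforward bookkeeping, so the task reduces to verifying pointwise multiplier bounds on the explicit kernels produced above. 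Following the strategy of \cite{Sai15MMAS}, I would freeze the normal variable $x_N$ and regard each operator as a singular integral on $\mathbb{R}^{N-1}$, deduce the $\mathscr{R}$-bound in $L_q(\mathbb{R}^{N-1})$ uniformly in $x_N\in(0,\delta)$, and then integrate in the normal direction, using the finite width $\delta$ to close the estimate.

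The main obstacle, as foreshadowed in the introduction, is the behavior of $|\det\fb{L}|^{-1}$ near $\xi'=0$: the matching Neumann conditions on both components of $\partial\Omega$ make two rows of $\fb{L}$ nearly proportional as $|\xi'|\to 0$, producing a stronger singularity than in the Neumann-Dirichlet case of \cite{Sai15MMAS} (see Remark \ref{Rem:highsingular}). To overcome this I would split the symbol by a smooth cut-off $\chi(\xi')$ separating the low-frequency region $|\xi'|\leq c$ from the high-frequency region $|\xi'|\geq c$. On the high-frequency part, the exponential decay provided by $e^{-A x_N}$, $e^{-B x_N}$ and their reflected counterparts at $x_N=\delta$ absorbs arbitrary polynomial growth in $|\xi'|$, and the usual symbol calculus yields the required multiplier estimates. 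On the low-frequency part, the hypothesis $\gamma_0>0$ and Lemma \ref{Lem:EstidetL} supply a uniform lower bound $|\det\fb{L}|\geq c(\gamma_0)>0$, which neutralizes the singularity at the origin and opens the way to apply the cut-off procedure of Saito \cite[Lemma 5.5]{Sai15MMAS}. The $(\tau\partial_\tau)^\ell$ derivatives for $\ell=0,1$ preserve the symbol class since $\tau\partial_\tau$ acts as a harmless differential operator on the rational functions of $A$ and $\lambda$ that appear, and hence pose no extra difficulty.
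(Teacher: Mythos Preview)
Your overall architecture matches the paper's: partial Fourier transform, reduction to an ODE system, Cramer's rule for the coefficient matrix $\fb{L}$, then symbol estimates fed into the Section~5 lemmas with a low/high-frequency cut-off. (A notational slip: the paper sets $A=|\xi'|$ and $B=\sqrt{\mu^{-1}\lambda+A^2}$, the reverse of your convention.) The paper also writes the solution in terms of $\mathcal{M}(x_N)=(e^{-Bx_N}-e^{-Ax_N})/(B-A)$ to avoid the apparent $(B-A)^{-1}$ singularity, and treats the tangential components $u_j$ $(j<N)$ separately via the scalar resolvent problem \eqref{eq:PDEuj} after $u_N,\theta$ are in hand; you do not mention either step, but these are secondary.

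The real gap is your treatment of the low-frequency region. You write that ``the hypothesis $\gamma_0>0$ and Lemma \ref{Lem:EstidetL} supply a uniform lower bound $|\det\fb{L}|\geq c(\gamma_0)>0$''. This is false: even for $\lambda\in\Sigma_{\varepsilon,\gamma_0}$ with $\gamma_0>0$, the determinant still vanishes as $\xi'\to0$. What Lemma \ref{Lem:EstidetL} actually gives is
\[
|\det\fb{L}|\ \geq\ c_{\varepsilon,\gamma_0}\,(|\lambda|^{1/2}+A)^6\min\{1,A\},
\]
so that $|\det\fb{L}|^{-1}$ carries a genuine factor $A^{-1}$ near the origin (compare Remark \ref{Rem:gamma0}: without $\gamma_0>0$ the blow-up would be $A^{-10}$; the condition $\gamma_0>0$ only tames it to $A^{-1}$, it does not remove it). This residual $A^{-1}$ is precisely why the paper's cut-off is the \emph{inhomogeneous} decomposition $1=\zeta_0(\xi')+\zeta_1(\xi')/A$ with $\zeta_1=A(1-\zeta_0)$, rather than a plain partition of unity: the extra $1/A$ in the $\zeta_1$ piece absorbs the $(1+1/A)$ in \eqref{esti:InvdetL}, yielding $\zeta_j/\det\fb{L}\in\mathbb{M}_{-6,2,\varepsilon,\gamma_0}$ for both $j=0,1$ (Lemma \ref{Lem:EstiSym}(b)). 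The price is that the $\zeta_1$ terms in \eqref{eq:rwSN}--\eqref{eq:rwP} lose the factor $A$ in front of the kernel and hence fall outside the scope of Lemma \ref{Lem:RbddV}; they are exactly the operators of type $K_3$ in Lemma \ref{Lem:RbddVS}, whose kernels decay only like $|x'|^{-(N-1)}$ and require the layer-specific argument (fix $x_N$, apply Lemma \ref{Lem:Rbddw} on $\mathbb{R}^{N-1}$, then use the finite width $\delta$). Your sketch invokes Lemma \ref{Lem:RbddVS} by name but mislocates the reason it is needed: it is not that the cut-off plus $\gamma_0>0$ neutralize the origin singularity and ``open the way'' to \cite[Lemma 5.5]{Sai15MMAS}; rather, an $A^{-1}$ singularity survives, the cut-off is engineered to trade it for a missing $A$ in the operator, and Lemma \ref{Lem:RbddVS} is what handles that missing $A$.
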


\begin{Rem} \label{Rem:Rbdddh}
It is reasonable (and possible by this theorem on account of $\gamma_0>0$) to show the $\mathscr{R}$-boundedness for 
\begin{align}
\fb{u}=\mathcal{S}(\lambda)(\lambda^{1/2}\fb{h}, \nabla \fb{h}), \quad 
\theta=\mathcal{T}(\lambda)(\lambda^{1/2}\fb{h}, \nabla \fb{h})  
\end{align}
instead of \eqref{eq:RbdddhSolform}. 
In view of \eqref{eq:RbdT0} and \eqref{eq:Defht}, however, 
this is not useful to show Theorem \ref{Thm:Rbdd} 
since one cannot control $\lambda^{1/2}{T_0}{\tilde{\fb{f}}}{\nu}$ that appears in $\lambda^{1/2} \tilde{\fb{h}}$. 
This is why we provide Theorem \ref{Thm:Rbdddh} with the solution operators \eqref{eq:RbdddhSolform}. 
Note that the pressure $T_0\tilde{\fb{f}}$ is present only in the $N$-th component of $\tilde{\fb{h}}$. 
\end{Rem}

In the remainder of this section, 
we prove that our main theorem (Theorem \ref{Thm:Rbdd}) follows from Theorem \ref{Thm:Rbdddh}, 
which will be established in Section \ref{Sec:Pf}. 
To show the uniqueness of solutions by duality, we use the following lemma, see for instance \cite[Lemma 7.1]{Sai15MMAS}. 

\begin{Lem} 
\label{Lem:GaussDivFD}
Let $1<q<\infty$ and let $q'$ be its dual exponent. 
For given $\fb{u}\in W^2_q(\Omega)^N$, $\fb{v}\in W_{q'}^2(\Omega)^N$, $\theta\in W^1_q(\Omega)$ and $\pi\in W_{q'}^1(\Omega)$, 
we have the following formula: 
\begin{align}
(\fb{u},\textup{Div}\,\fb{S}(\fb{v},\pi))_\Omega = (\textup{Div}\,\fb{S}(\fb{u},\theta),\fb{v})_\Omega 
+ \langle \fb{u},\fb{S}(\fb{v},\pi)\nu\rangle_{\partial\Omega} - \langle \fb{S}(\fb{u},\theta)\nu,\fb{v}\rangle_{\partial\Omega} \\
+ (\textup{div}\,\fb{u},\pi)_\Omega - (\theta,\textup{div}\,\fb{v})_\Omega. 
\end{align}
\end{Lem}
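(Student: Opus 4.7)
The plan is to derive the formula by performing two integrations by parts in a symmetric fashion and then subtracting. Starting from the left-hand side, I would expand componentwise:
\begin{align}
(\fb{u},\text{Div}\,\fb{S}(\fb{v},\pi))_\Omega = \sum_{j,k=1}^N \int_\Omega u_j\,\partial_k S_{jk}(\fb{v},\pi)\,dx.
\end{align}
Since $\fb{u}\in W^2_q(\Omega)^N$, $\fb{v}\in W^2_{q'}(\Omega)^N$, $\theta\in W^1_q(\Omega)$ and $\pi\in W^1_{q'}(\Omega)$, the bulk and trace products lie in $L_1(\Omega)$ and $L_1(\partial\Omega)$, respectively, by H\"older's inequality and the standard trace theorem. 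Hence integration by parts is legitimate; to be completely rigorous on the infinite layer, I would multiply by a tangential cutoff $\chi_R(x')\in C^\infty_0(\mathbb{R}^{N-1})$ with $\chi_R\equiv1$ on $|x'|\leq R$ and $|\nabla\chi_R|\leq C/R$, apply the classical divergence theorem on the bounded region $\Omega\cap\{|x'|<2R\}$, and let $R\to\infty$; the tangential side contributions vanish because $\nabla\chi_R\to 0$ in $L_\infty$ and the remaining integrands are absolutely integrable.

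Carrying out the integration by parts yields
\begin{align}
(\fb{u},\text{Div}\,\fb{S}(\fb{v},\pi))_\Omega
= \langle \fb{u},\fb{S}(\fb{v},\pi)\nu\rangle_{\partial\Omega}
- \sum_{j,k}\int_\Omega \partial_k u_j\, S_{jk}(\fb{v},\pi)\,dx.
\end{align}
Splitting $\fb{S}(\fb{v},\pi)=\mu\fb{D}(\fb{v})-\pi\fb{I}$, the trace part produces $(\text{div}\,\fb{u},\pi)_\Omega$, while the deformation part contributes
\begin{align}
-\mu\sum_{j,k}\int_\Omega \partial_k u_j\, \fb{D}_{jk}(\fb{v})\,dx
= -\frac{\mu}{2}\int_\Omega \fb{D}(\fb{u}):\fb{D}(\fb{v})\,dx,
\end{align}
where I use the symmetry $\fb{D}_{jk}(\fb{v})=\fb{D}_{kj}(\fb{v})$ to symmetrize $\partial_k u_j$ into $\tfrac12\fb{D}_{jk}(\fb{u})$. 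The key observation is that this last term is symmetric in $(\fb{u},\fb{v})$.

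Performing the analogous calculation with the roles of $(\fb{u},\theta)$ and $(\fb{v},\pi)$ swapped gives
\begin{align}
(\text{Div}\,\fb{S}(\fb{u},\theta),\fb{v})_\Omega
= \langle \fb{S}(\fb{u},\theta)\nu,\fb{v}\rangle_{\partial\Omega}
- \frac{\mu}{2}\int_\Omega \fb{D}(\fb{v}):\fb{D}(\fb{u})\,dx
+ (\theta,\text{div}\,\fb{v})_\Omega.
\end{align}
Subtracting this from the previous identity, the symmetric $\fb{D}(\fb{u}):\fb{D}(\fb{v})$ integrals cancel exactly, and the claimed formula falls out. The only genuinely delicate point is the density/cutoff argument justifying integration by parts on the unbounded layer; beyond that the computation is an algebraic manipulation exploiting the symmetry of $\fb{D}$.
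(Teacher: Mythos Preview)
Your argument is correct and is exactly the standard derivation of this Green-type identity for the Stokes system: integrate by parts once to reach the symmetric bilinear form $\tfrac{\mu}{2}\int_\Omega \fb{D}(\fb{u}):\fb{D}(\fb{v})\,dx$, do the same with the roles swapped, and subtract. The paper does not supply its own proof of this lemma but simply refers to \cite[Lemma~7.1]{Sai15MMAS}, where the same computation appears; your cutoff argument to justify integration by parts on the unbounded layer is the appropriate extra care for this setting.
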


\begin{proof}[Proof of Theorem \ref{Thm:Rbdd} from Theorem \ref{Thm:Rbdddh}]
By summarizing the aforementioned arguments, the solution of \eqref{SR} is given by 
\begin{align} \label{eq:upred} 
\quad (\fb{u},\,\theta) = 
\big( \mathcal{V}_0g+S_0(\lambda){\tilde{\fb{f}}}+\mathcal{S}(\lambda)(\lambda^{1/2}{\tilde{\fb{h}}}',\tilde{h}_N,\nabla{\tilde{\fb{h}}}), \, 
T_0{\tilde{\fb{f}}}+\mathcal{T}(\lambda)(\lambda^{1/2}{\tilde{\fb{h}}}',\tilde{h}_N,\nabla{\tilde{\fb{h}}}) \big). 
\quad 
\end{align}
We define the operator $\fb{\widetilde{\mathcal{V}}}_0=(\widetilde{\mathcal{V}}_0^{jk\ell})_{1\leq j,k,\ell\leq N}$ by 
\begin{align}
\widetilde{\mathcal{V}}_0^{jk\ell } \fb{f}&= 2\partial_\ell {\mathcal{V}}_{0j} f_k, \quad \text{when } 1\leq k\leq N-1, \\ 
\widetilde{\mathcal{V}}_0^{jN\ell } \fb{f}&= 2\partial_\ell {\mathcal{V}}_{0N} f_j, \quad \text{when } 1\leq j\leq N-1, \\ 
\widetilde{\mathcal{V}}_0^{NN\ell } \fb{f}&= 2f_\ell -2\sum_{j'=1}^{N-1}{\partial}_\ell {\mathcal{V}}_{0j'}f_{j'} 
\end{align}
for $\fb{f}=(f_1,\cdots,f_N)\in L_q(\Omega)^N$ and $1\leq j,k,\ell \leq N$. 
Then, for $g\in{W^1_q(\Omega)}$, 
\begin{align}
\fb{\widetilde{\mathcal{V}}}_0(\nabla g)=(\widetilde{\mathcal{V}}_0^{jk\ell }(\nabla g))_{1\leq j,k,\ell \leq N}
=(\partial_\ell D_{jk}(\mathcal{V}_0g))_{1\leq j,k,\ell \leq N}=\nabla{\fb{D}}(\mathcal{V}_0g) 
\end{align}
since 
\begin{align} 
&\begin{aligned} 
&\partial_\ell {D_{jk}}(\mathcal{V}_0g) \\ 
&= \partial_\ell \left(-\partial_k{\mathscr{F}}_\xi^{-1} \left[ \frac{i\xi_{j}}{|\xi|^2} \mathscr{F}_x[g^\ast](\xi) \right](x)
-\partial_{j}{\mathscr{F}}_\xi^{-1} \left[ \frac{i\xi_k}{|\xi|^2} \mathscr{F}_x[g^\ast](\xi) \right](x)\right) \\ 
&= -2\partial_\ell {\mathscr{F}}_\xi^{-1} \left[ \frac{i\xi_{j}}{|\xi|^2} \mathscr{F}_x[(\partial_kg)^\ast](\xi) \right](x) \\ 
&= 2\partial_\ell {\mathcal{V}}_{0j}(\partial_kg) 
= \widetilde{\mathcal{V}}_0^{jk\ell }(\nabla g) \quad \text{when $1\leq k \leq N-1$,}  
\end{aligned} 
\end{align}
similarly, 
\begin{align}
\partial_\ell {D_{jN}}(\mathcal{V}_0 g)=\widetilde{\mathcal{V}}_0^{jN\ell }(\nabla g) \quad \text{when $1\leq j\leq N-1$} 
\end{align}
and 
\begin{align} \label{eq:rwDNN}
&\begin{aligned}
&\partial_\ell {D_{NN}}(\mathcal{V}_0g) = \partial_\ell(2\partial_N\mathcal{V}_{0N}g)  \\
&= 2\partial_\ell \bigg(\text{div}_{}\,{\mathcal{V}}_0g-\sum_{j'=1}^{N-1}{\partial}_{j'}{\mathcal{V}}_{0j'}g\bigg) 
= 2\partial_\ell g-2\sum_{j'=1}^{N-1}{\partial}_\ell {\mathcal{V}}_{0j'}(\partial_{j'}g)
=\widetilde{\mathcal{V}}_0^{NN\ell }(\nabla g). 
\end{aligned}
\end{align} 
By Lemma \ref{Lem:diveq}, we have 
\begin{align} \label{eq:proptK0}
\fb{\widetilde{\mathcal{V}}}_0\in{\mathcal{L}}(L_q(\Omega)^N,L_q(\Omega)^{N^3}). 
\end{align} 
Additionally, we define $M_{\lambda^{-1/2}}$ and an extension $\tilde{\nu}=(0,\cdots,0,\tilde{\nu}_N)$ of $\nu$ by 
\begin{align} \label{eq:defMltn}
\begin{aligned}
&M_{\lambda^{-1/2}}f=\lambda^{-1/2}f \quad (f\in{L_q(\Omega)}), \\ 
&\tilde{\nu}(x)=\varphi_\delta(x_N)\nu(x',\delta)+\varphi_0(x_N)\nu(x',0). 
\end{aligned}
\end{align}
By the Kahane's contraction principle, see \cite[Proposition 2.5]{KunWei04Sp}, if $\gamma_0>0$, we have 
\begin{align} \label{eq:RbdMl}
\mathscr{R}_{\mathcal{L}(L_q(\Omega))}(\{M_{\lambda^{-1/2}}\mid \lambda\in{\Sigma_{\varepsilon,\gamma_0}}\})\leq2\gamma_0^{-1/2}. 
\end{align}
In view of \eqref{eq:Defft} and \eqref{eq:Defht}, we define 
\begin{align} 
&[R_0(\fb{f}, \lambda g, \nabla g)]_i \\ 
&= [\tilde{\fb{f}}]_i \\ 
&= [\fb{f}]_i - \mathcal{V}_{0i}\lambda g 
+ \sum_{j=1}^N \mu{\fb{\widetilde{\mathcal{V}}}}_0^{ijj}(\nabla g), \\ 
&R_1'(\lambda)(\fb{f}, \lambda g, \lambda^{1/2}g, \nabla g, \lambda^{1/2}{\fb{h}}, \nabla{\fb{h}}) \\ 
&= \lambda^{1/2}{\tilde{\fb{h}}}' \\ 
&=\lambda^{1/2}{\fb{h}}' - \mu[\fb{D}(\mathcal{V}_0\lambda^{1/2}g)\tilde{\nu}]'
-\mu[\lambda^{1/2}{\fb{D}}(S_0(\lambda){R}_0(\fb{f}, \lambda g, \nabla g))\tilde{\nu}]', \\ 
&R_{1N}(\lambda)(\fb{f}, \lambda g, \lambda^{1/2}g, \nabla g, \lambda^{1/2}{\fb{h}}, \nabla{\fb{h}}) \\ 
&= \tilde h_N \\ 
&=M_{\lambda^{-1/2}}\lambda^{1/2} h_N - \mu M_{\lambda^{-1/2}}[\fb{D}(\mathcal{V}_0\lambda^{1/2}g)\tilde{\nu}]_N \\
&-\mu M_{\lambda^{-1/2}}[\lambda^{1/2}{\fb{D}}(S_0(\lambda){R}_0(\fb{f}, \lambda g, \nabla g))\tilde{\nu}]_N
+T_0{R}_0(\fb{f}, \lambda g, \nabla g)\tilde{\nu}_N, \\ 
\label{eq:rw}
&\begin{aligned}
&R_2(\lambda)(\fb{f}, \lambda g, \lambda^{1/2}g, \nabla g, \lambda^{1/2}{\fb{h}}, \nabla{\fb{h}}) \\ 
&=\nabla\tilde{\fb{h}}\\ 
&=\nabla{\fb{h}}- \nabla\big(\mu{\fb{D}}(\mathcal{V}_0g)\tilde{\nu}\big) 
-\nabla\big(\fb{S}(S_0(\lambda){R}_0(\fb{f}, \lambda g, \nabla g),T_0{R}_0(\fb{f}, \lambda g, \nabla g))\tilde{\nu}\big) \\ 
&=\nabla{\fb{h}}- \mu{\fb{\widetilde{\mathcal{V}}}}_0(\nabla g)\tilde{\nu}
- \mu M_{\lambda^{-1/2}}{\fb{D}}(\mathcal{V}_0\lambda^{1/2}g)\nabla\tilde{\nu}\\ 
&-\nabla{\fb{S}}(S_0(\lambda){R}_0(\fb{f}, \lambda g, \nabla g),T_0{R}_0(\fb{f}, \lambda g, \nabla g))\tilde{\nu}\\ 
&-M_{\lambda^{-1/2}}\mu\lambda^{1/2}{\fb{D}}(S_0(\lambda){R}_0(\fb{f}, \lambda g, \nabla g))\nabla\tilde{\nu}
+T_0{R}_0(\fb{f}, \lambda g, \nabla g)\nabla\tilde{\nu}, 
\end{aligned}
\end{align}
where $[\fb{g}]_J$ is the $J$-th component of $N$-vector function $\fb{g}$ for $1\leq J\leq N$ 
and $[\fb{g}]'=([\fb{g}]_1,\cdots,[\fb{g}]_{N-1})$. 
By Lemma \ref{Lem:Rbddprop}, Lemma \ref{Lem:diveq}, Lemma \ref{Lem:Rbddha}, \eqref{eq:proptK0} and \eqref{eq:RbdMl}, 
\begin{align} \label{eq:RbddRjl}
\begin{aligned}
R_0\in{\mathcal{L}}(L_q(\Omega)^N \times \widehat{W}^{-1}_q(\Omega) \times L_q(\Omega)^N,L_q(\Omega)^N), \\ 
\mathscr{R}_{\mathcal{L}(X_q(\Omega),L_q(\Omega))}(\{ R_1'(\lambda) \mid \lambda\in{\Sigma_{\varepsilon,0}} \}){\leq C_{N,q,\varepsilon,\mu,\delta}}, \\ 
\mathscr{R}_{\mathcal{L}(X_q(\Omega),L_q(\Omega))}(\{ R_{1N}(\lambda) \mid \lambda\in{\Sigma_{\varepsilon,0}} \}){\leq C_{N,q,\varepsilon,\mu,\delta}}, \\ 
\mathscr{R}_{\mathcal{L}(X_q(\Omega),L_q(\Omega))}(\{ R_2(\lambda) \mid \lambda\in{\Sigma_{\varepsilon,0}} \}){\leq C_{N,q,\varepsilon,\mu,\delta}}.  
\end{aligned}
\end{align}
In view of \eqref{eq:upred}, we define $\mathcal{U}(\lambda)$ and $\mathcal{P}(\lambda)$ by 
\begin{align} \label{eq:defUP}
\begin{aligned}
&\mathcal{U}(\lambda)(\fb{f}, \lambda g,\lambda^{1/2}g, \nabla g, \lambda^{1/2}{\fb{h}}, \nabla \fb{h}) \\ 
&= \mathcal{V}_0g+S_0(\lambda){R}_0(\lambda)(\fb{f}, \lambda g, \nabla g) 
+\mathcal{S}(\lambda)\big(R_1'(\lambda), R_{1N}(\lambda), R_2(\lambda)\big)
(\fb{f}, \lambda g, \lambda^{1/2}g, \nabla g, \lambda^{1/2}{\fb{h}}, \nabla{\fb{h}}), \\ 
&\mathcal{P}(\lambda)(\fb{f}, \lambda g,\lambda^{1/2}g, \nabla g, \lambda^{1/2}{\fb{h}}, \nabla \fb{h}) \\ 
&= T_0{R}_0(\lambda)(\fb{f}, \lambda g, \nabla g) 
+\mathcal{T}(\lambda)\big(R_1'(\lambda), R_{1N}(\lambda), R_2(\lambda)\big)
(\fb{f}, \lambda g, \lambda^{1/2}g, \nabla g, \lambda^{1/2}{\fb{h}}, \nabla{\fb{h}}). 
\end{aligned}
\end{align}
Then, from Lemma \ref{Lem:Rbddprop}, Lemma \ref{Lem:diveq}, Lemma \ref{Lem:Rbddha}, 
Theorem \ref{Thm:Rbdddh} and \eqref{eq:RbddRjl}, 
we obtain the existence and the $\mathscr{R}$-boundedness of the solution operator families. 

Thus, the proof will be completed by showing the uniqueness of solutions to \eqref{SR}. 
Assume that $(\fb{u},\theta)\in{W^2_q(\Omega)}^N\times{\widehat{W}^1_q(\Omega)}$ satisfies the following equation:  
\begin{align} \label{SR0} 
\left\{
\begin{array}{ll}
\lambda \fb{u}- \text{Div}_{}\,{\fb{S}}(\fb{u},\theta) = 0, \quad \text{div}_{}\,{\fb{u}}= 0 & \text{in } \Omega, \\
\fb{S}(\fb{u},\theta)\nu= 0 & \text{on } \partial\Omega. 
\end{array}
\right.
\end{align}
We find $\theta\in{L_q(\Omega)}$ by using the $N$-th component of the boundary condition as follows:
\begin{align} \label{eq:0dpres}
\|\theta\|_{L_q(\Omega)} 
&= \bigg\| 2\mu{\partial}_Nu_N(x',0)+\int_0^{x_N}{\partial}_N{\theta}(x',y_N)\,dy_N \bigg\|_{L_q(\Omega)} \\
&\leq 2\mu\delta^{1/q}\|\partial_Nu_N(x',0)\|_{L_q(\mathbb{R}^{N-1})} + \delta\|\partial_N{\theta}\|_{L_q(\Omega)} \\
&\leq C(\|u\|_{W^2_q(\Omega)} + \|\partial_N{\theta}\|_{L_q(\Omega)})<\infty. 
\end{align}
Thus, given $\varphi\in C_0^\infty(\Omega)$, 
we take a solution $(\fb{v},\pi)\in W^2_{q'}(\Omega)^N\times W^1_{q'}(\Omega)$ of the problem 
\begin{align} \label{SRdual} 
\left\{
\begin{array}{ll}
\overline{\lambda} \fb{v}- \text{Div}_{}\,{\fb{S}}(\fb{v},\pi) = \varphi, \quad \text{div}_{}\,{\fb{v}}= 0 & \text{in } \Omega, \\
\fb{S}(\fb{v},\pi)\nu= 0 & \text{on } \partial\Omega, 
\end{array}
\right.
\end{align}
and employ Lemma \ref{Lem:GaussDivFD} to see that  
\begin{align}
(\fb{u},\varphi)_\Omega = (\fb{u},\overline{\lambda} \fb{v}- \text{Div}_{}\,{\fb{S}}(\fb{v},\pi))_\Omega 
= (\lambda \fb{u}- \text{Div}_{}\,{\fb{S}}(\fb{u},\theta),\fb{v})_\Omega = 0, 
\end{align}
which implies $\fb{u}=0$. 
Hence, $\nabla{\theta}=0$ by the first equation of \eqref{SR0}, and so $\theta=0$ by the boundary condition, which proves the uniqueness. 
\end{proof}


\section{Solution formulas for the problem only with boundary data} \label{sec:formula}

In this section, we give the solution formulas of \eqref{eq:SRdh} in order to prove Theorem \ref{Thm:Rbdddh}. 
By applying the partial Fourier transform with respect to $x'$ to \eqref{eq:SRdh}, we get
\begin{align} \label{eq:FSRdh}
\left\{
\begin{array}{ll}
\mu(B^2-\partial_N^2) \widehat{u}_j(\xi',x_N) + i\xi_j{\widehat{\theta}(\xi',x_N)} = 0 & 0<x_N<\delta, \\
\mu(B^2-\partial_N^2) \widehat{u}_N(\xi',x_N) + \partial_N{\widehat{\theta}(\xi',x_N)} = 0 & 0<x_N<\delta, \\ 
i\xi'\cdot{\widehat{u}{'}(\xi',x_N)} + \partial_N{\widehat{u}_N(\xi',x_N)} = 0 & 0<x_N<\delta, \\
\mu(\partial_N{\widehat{u}_j(\xi',x_N)} + i\xi_j{\widehat{u}_N(\xi',x_N)})\nu_N(x_N) = \widehat{h}_j(\xi',x_N) & x_N \in \{0,\delta\}, \\
(2\mu{\partial}_N{\widehat{u}_N(\xi',x_N)} - \widehat{\theta}(\xi',x_N))\nu_N(x_N) = \widehat{h}_N(\xi',x_N) & x_N \in \{0,\delta\}
\end{array}
\right.
\end{align} 
for $1\leq j\leq N-1$, where 
$\nu_N$ means the $N$-th component of the unit outer normal to $\partial\Omega$, that is, $\nu_N(\delta) = 1$ and $\nu_N(0)=-1$.
Here, we have set
\begin{align} \label{eq:DefAB}
A=|\xi'|, \quad B=\sqrt{\mu^{-1}\lambda+A^2} 
\end{align}
with $\text{Re\,} B>0$. 
To simplify the first and fourth equations of \eqref{eq:FSRdh}, we multiply them by $-i\xi_j$ and add up the resultant formulas. 
Then, if we set 
\begin{align}
\widehat{u}_d(\xi',x_N)=i\xi'\cdot{\widehat{u}{'}(\xi',x_N)}, \quad  \widehat{h}_d(\xi',x_N)=i\xi'\cdot{\widehat{h}{'}(\xi',x_N)}, 
\end{align}
we obtain the following ordinary differential equations 
with only three unknowns $\widehat{u}_d(\xi',\cdot)$, $\widehat{u}_N(\xi',\cdot)$ and $\widehat{\theta}(\xi',\cdot)$: 
\begin{align} \label{eq:FSRdht} 
\left\{
\begin{array}{ll}
\mu(B^2-\partial_N^2) \widehat{u}_d(\xi',x_N) - A^2\widehat{\theta}(\xi',x_N) = 0 & 0<x_N<\delta, \\
\mu(B^2-\partial_N^2) \widehat{u}_N(\xi',x_N) + \partial_N{\widehat{\theta}(\xi',x_N)} = 0 & 0<x_N<\delta, \\ 
\widehat{u}_d(\xi',x_N) + \partial_N{\widehat{u}_N(\xi',x_N)} = 0 & 0<x_N<\delta, \\
\mu(\partial_N{\widehat{u}_d(\xi',x_N)} - A^2\widehat{u}_N(\xi',x_N))\nu_N(x_N) = \widehat{h}_d(\xi',x_N) & x_N \in \{0,\delta\}, \\
(2\mu{\partial}_N{\widehat{u}_N(\xi',x_N)} - \widehat{\theta}(\xi',x_N))\nu_N(x_N) = \widehat{h}_N(\xi',x_N) & x_N \in \{0,\delta\}.
\end{array}
\right.
\end{align} 

In order to solve this system, 
we add the second equation multiplied by $\partial_N$ to the first equation in \eqref{eq:FSRdht} 
and then we have by the third equation 
\begin{align} \label{eq:FSRdhp}
(\partial_N^2-A^2)\widehat{\theta}(\xi',x_N)=0. 
\end{align}
Multiplying $(A^2-\partial_N^2)$ to the first and the second equations implies that
\begin{align} \label{eq:FSRdhu}
\begin{aligned}
(A^2-\partial_N^2)(B^2-\partial_N^2)\widehat{u}_d(\xi',x_N)&=0, \\
(A^2-\partial_N^2)(B^2-\partial_N^2)\widehat{u}_N(\xi',x_N)&=0. 
\end{aligned}
\end{align}
Thus, the solution to \eqref{eq:FSRdht} can be given by
\begin{align} 
\begin{aligned} \begin{aligned} \label{eq:ODEsolform0} 
\widehat{u}_d(\xi',x_N) &= \sum_{\ell=1,2} (\alpha_{\ell d}^0 e^{-Ad_\ell(x_N)} + \beta_{\ell d}^0 e^{-Bd_\ell(x_N)}), \\ 
\widehat{u}_N(\xi',x_N) &= \sum_{\ell=1,2} (\alpha_{\ell N}^0 e^{-Ad_\ell(x_N)} + \beta_{\ell N}^0 e^{-Bd_\ell(x_N)}), \\
\widehat{\theta}(\xi',x_N) &= \sum_{\ell=1,2} \gamma_\ell e^{-Ad_\ell(x_N)} 
\end{aligned} \end{aligned} 
\end{align}
with some coefficients 
$\alpha_{\ell d}^0$, $\beta_{\ell d}^0$, $\alpha_{\ell N}^0$, $\beta_{\ell N}^0$ and $\gamma_\ell$ 
depending on $\lambda$ and $\xi'$,
where 
\begin{align} \label{eq:Defdl}
d_\ell(x_N) = \left\{ 
\begin{array}{ll}
\delta-x_N, & \ell=1, \\ 
x_N, & \ell=2.
\end{array}
\right. 
\end{align}
We obtain the following relations of these coefficients 
by inserting \eqref{eq:ODEsolform0} to the first, second and third equations of \eqref{eq:FSRdht}: 
\begin{align}
\begin{aligned} \label{eq:ODErelation}
\lambda \alpha_{\ell d}^0 - A^2 \gamma_\ell = 0, \quad 
\lambda \alpha_{\ell N}^0 + (-1)^{\ell-1} A \gamma_\ell = 0, \\
\alpha_{\ell d}^0 + (-1)^{\ell-1} A \alpha_{\ell N}^0 = 0, \quad
\beta_{\ell d}^0 + (-1)^{\ell-1} B \beta_{\ell N}^0 = 0 
\end{aligned}
\end{align}
for $\ell=1,2$. 
Then we have the following system for $\alpha_{\ell N}^0$ and $\beta_{\ell N}^0$ 
if we insert \eqref{eq:ODEsolform0} to the fourth and fifth equations of \eqref{eq:FSRdht} 
and rewrite them by using \eqref{eq:ODErelation}: 
\begin{align} 
\begin{aligned} \label{eq:AlgEq0}
-2A^2 (\alpha_{1N}^0 + a\alpha_{2N}^0) 
-D_0(\beta_{1N}^0 + b\beta_{2N}^0) &= \mu^{-1}{\widehat{h}_d(\xi',\delta)}, \\ 
-2A^2 (a\alpha_{1N}^0 + \alpha_{2N}^0) 
-D_0(b\beta_{1N}^0 + \beta_{2N}^0) &= -\mu^{-1}{\widehat{h}_d(\xi',0)}, \\ 
-D_0(-\alpha_{1N}^0 + a\alpha_{2N}^0) 
-2AB (-\beta_{1N}^0 + b\beta_{2N}^0) &= \mu^{-1}A{\widehat{h}_N(\xi',\delta)}, \\ 
-D_0(-a\alpha_{1N}^0 + \alpha_{2N}^0) 
-2AB (-b\beta_{1N}^0 + \beta_{2N}^0) &= -\mu^{-1}A{\widehat{h}_N(\xi',0)} 
\end{aligned}
\end{align} 
where 
\begin{align} \label{eq:DefabD0}
a = e^{-A\delta}, \quad b=e^{-B\delta}, \quad D_0=B^2+A^2. 
\end{align}

Now, we rewrite \eqref{eq:ODEsolform0} as  
\begin{align} 
\begin{aligned} \label{eq:ODEsolform}
\widehat{u}_d(\xi',x_N) &= \sum_{\ell=1,2} (\mu_{\ell d} \mathcal{M}(d_\ell(x_N)) + \beta_{\ell d} e^{-Bd_\ell(x_N)}), \\
\widehat{u}_N(\xi',x_N) &= \sum_{\ell=1,2} (\mu_{\ell N} \mathcal{M}(d_\ell(x_N)) + \beta_{\ell N} e^{-Bd_\ell(x_N)}), \\
\widehat{\theta}(\xi',x_N) &= \sum_{\ell=1,2} \gamma_\ell e^{-Ad_\ell(x_N)} 
\end{aligned} 
\end{align}
with some coefficients $\mu_{\ell d}$, $\beta_{\ell d}$, $\mu_{\ell N}$ and $\beta_{\ell N}$ 
depending on $\lambda$ and $\xi'$ so that $(B-A)^{-1}$ does not appear, where
\begin{align} \label{eq:DefccM}
\mathcal{M}(x_N) = \frac{e^{-Bx_N}-e^{-Ax_N}}{B-A}. 
\end{align}
Then, by comparing \eqref{eq:ODEsolform0} and \eqref{eq:ODEsolform}, we get 
\begin{align} \label{eq:RelateODEsf}
\alpha_{\ell d}^0=-\frac{\mu_{\ell d}}{B-A}, \quad 
\beta_{\ell d}^0=\frac{\mu_{\ell d}}{B-A}+\beta_{\ell d}, \quad 
\alpha_{\ell N}^0=-\frac{\mu_{\ell N}}{B-A}, \quad 
\beta_{\ell N}^0=\frac{\mu_{\ell N}}{B-A}+\beta_{\ell N}. 
\end{align}
And thus, \eqref{eq:AlgEq0} together with $b=a+(B-A)\mathcal{M}(\delta)$ yields
\begin{align} \label{eq:AlgEqM} 
\fb{L}\left[
\begin{array}{r}
\mu_{1N} \\ \beta_{1N} \\ \mu_{2N} \\ \beta_{2N}
\end{array}
\right] = \left[
\begin{array}{c}
\mu^{-1}{\widehat{h}_d(\xi',\delta)} \\ -\mu^{-1}{\widehat{h}_d(\xi',0)} \\ \mu^{-1}A{\widehat{h}_N(\xi',\delta)} \\ -\mu^{-1}A{\widehat{h}_N(\xi',0)}
\end{array}
\right] 
\end{align}
where $(i,j)$ component $L_{ij}$ of $\fb{L}$ is given by  
\begin{align} \begin{aligned} \label{eq:compL} 
L_{11} &= -(B+A), & L_{12} &= -D_0, \\ 
L_{13} &= -(B+A)a-D_0{\mathcal{M}(\delta)}, & L_{14} &= -D_0{a}-D_0(B-A)\mathcal{M}(\delta), \\
L_{21} &= -(B+A)a-D_0{\mathcal{M}(\delta)}, & L_{22} &= -D_0{a}-D_0(B-A)\mathcal{M}(\delta), \\ 
L_{23} &= -(B+A), & L_{24} &= -D_0, \\
L_{31} &= -(B-A), & L_{32} &= 2AB, \\ 
L_{33} &= (B-A)a-2AB{\mathcal{M}(\delta)}, & L_{34} &= -2AB{a}-2AB(B-A)\mathcal{M}(\delta), \\
L_{41} &= -(B-A)a+2AB{\mathcal{M}(\delta)}, & L_{42} &= 2AB{a}+2AB(B-A)\mathcal{M}(\delta), \\ 
L_{43} &= B-A, & L_{44} &= -2AB, 
\end{aligned} \end{align}
where $A$ and $B$ are defined in \eqref{eq:DefAB}, 
$a$ and $D_0$ are defined by \eqref{eq:DefabD0}, and $\mathcal{M}(\delta)$ is given by \eqref{eq:DefccM} with $x_N=\delta$. 
Also, the coefficient $\gamma_\ell$ can be obtained as
\begin{align} \label{eq:gammalfmu} 
\gamma_\ell=(-1)^{\ell-1}\frac{\mu(B+A)}{A}\mu_{\ell N} 
\end{align}
from \eqref{eq:ODErelation} and \eqref{eq:RelateODEsf}. 


Thus, if we solve \eqref{eq:AlgEqM}, we obtain the solution formula of $u_N$ and $\theta$ 
by \eqref{eq:ODEsolform} and \eqref{eq:gammalfmu} as follows: 
\begin{align}  \label{eq:solform}
\begin{aligned} 
u_N(x) &= \sum_{\substack{k=1}}^4\sum_{\ell=1}^2 \mathscr{F}_{\xi'}^{-1} \left[ \left\{
\frac{L_{k,2\ell-1}}{\det{\fb{L}}}{\mathcal{M}}(d_\ell(x_N)) + \frac{L_{k,2\ell}}{\det{\fb{L}}}e^{-Bd_\ell(x_N)}
\right\} r_k \right](x'), \\ 
\theta(x) &= \sum_{\substack{k=1}}^4\sum_{\ell=1}^2 \mathscr{F}_{\xi'}^{-1} \left[ \left\{ 
(-1)^{\ell-1} \frac{\mu(B+A)}{A}\frac{L_{k,2\ell-1}}{\det{\fb{L}}}e^{-Ad_\ell(x_N)}
\right\} r_k \right](x'), 
\end{aligned}
\end{align} 
where $L_{i,j}$ denotes the $(i,j)$ cofactor of $\fb{L}$ and $r_k$ stands for the right-hand side: 
\begin{align*}
r_1=\mu^{-1}{\widehat{h}_d(\xi',\delta)}, \quad r_2= -\mu^{-1}{\widehat{h}_d(\xi',0)}, \quad 
r_3= \mu^{-1}A{\widehat{h}_N(\xi',\delta)}, \quad r_4=-\mu^{-1}A{\widehat{h}_N(\xi',0)}.
\end{align*} 
Here, the determinant of $\fb{L}$ is given by
\begin{align} \label{eq:detL}
\qquad \det \fb{L}=\frac{1}{(B-A)^2} \prod_{+,-} \{ (B^2+A^2)^2(1\pm a)(1\mp b) - 4A^3B(1\mp a)(1\pm b) \} \qquad 
\end{align}
and the cofactor $L_{i,j}$ is
\begin{align} \label{eq:cofacL} 
L_{1,1} &= -2AB{D_3}(1-a^2) - 16A^4B^2a{\mathcal{M}(\delta)}{}- 2AB(B^2-A^2)D_2{\mathcal{M}(\delta)}^2, \\ \notag 
L_{1,2} &= -(B-A)D_3(1-a^2) - 8A^3B(B-A)a{\mathcal{M}(\delta)}{}+ 2AB(B+A)D_2{\mathcal{M}(\delta)}^2, \\ \notag 
L_{1,3} &= 2AB{D_3}{a}(1-a^2) + 4AB(D_0^2-(B-A)D_3{a^2})\mathcal{M}(\delta){}- 2AB(B-A)^2D_3{a}\mathcal{M}(\delta)^2, \\ \notag 
L_{1,4} &= (B-A)D_3{a}(1-a^2) - (D_0{D_1}+(B^2-4AB+A^2)D_3{a^2})\mathcal{M}(\delta){}+ 2AB(B-A)D_3 a\mathcal{M}(\delta)^2, \\ \notag 
L_{2,1} &= 2AB{D_3}{a}(1-a^2) + 4AB(D_0^2-(B-A)D_3{a^2})\mathcal{M}(\delta){}- 2AB(B-A)^2D_3{a}\mathcal{M}(\delta)^2, \\ \notag 
L_{2,2} &= (B-A)D_3{a}(1-a^2) - (D_0{D_1}+(B^2-4AB+A^2)D_3{a^2})\mathcal{M}(\delta){}+ 2AB(B-A)D_3 a\mathcal{M}(\delta)^2, \\ \notag 
L_{2,3} &= -2AB{D_3}(1-a^2) - 16A^4B^2a{\mathcal{M}(\delta)}{}- 2AB(B^2-A^2)D_2{\mathcal{M}(\delta)}^2, \\ \notag 
L_{2,4} &= -(B-A)D_3(1-a^2) - 8A^3B(B-A)a{\mathcal{M}(\delta)}{}+ 2AB(B+A)D_2{\mathcal{M}(\delta)}^2, \\ \notag 
L_{3,1} &= -D_0{D_3}(1-a^2) + 2D_0^3a{\mathcal{M}(\delta)}{}+ (B^2-A^2)D_0{D_2}{\mathcal{M}(\delta)}^2, \\ \notag 
L_{3,2} &= (B+A)D_3(1-a^2) - 2(B+A)D_0^2a{\mathcal{M}(\delta)}{}- (B+A)D_0{D_2}{\mathcal{M}(\delta)}^2, \\ \notag 
L_{3,3} &= -D_0{D_3}{a}(1-a^2) + 2D_0(4A^3B+(B-A)D_3{a^2})\mathcal{M}(\delta){}+ (B-A)^2D_0D_3 a\mathcal{M}(\delta)^2, \\ \notag 
L_{3,4} &= (B+A)D_3{a}(1-a^2) - 2(A^2D_1+B^2D_3{a^2})\mathcal{M}(\delta){}- (B-A)D_0{D_3} a\mathcal{M}(\delta)^2, \\ \notag 
L_{4,1} &= D_0{D_3}{a}(1-a^2) - 2D_0(4A^3B+(B-A)D_3{a^2})\mathcal{M}(\delta){}- (B-A)^2D_0D_3 a\mathcal{M}(\delta)^2, \\ \notag 
L_{4,2} &= -(B+A)D_3{a}(1-a^2) + 2(A^2D_1+B^2D_3{a^2})\mathcal{M}(\delta){}+ (B-A)D_0{D_3} a\mathcal{M}(\delta)^2, \\ \notag 
L_{4,3} &= D_0{D_3}(1-a^2) - 2D_0^3a{\mathcal{M}(\delta)}{}- (B^2-A^2)D_0{D_2}{\mathcal{M}(\delta)}^2, \\ \notag 
L_{4,4} &= -(B+A)D_3(1-a^2) + 2(B+A)D_0^2a{\mathcal{M}(\delta)}{}+ (B+A)D_0{D_2}{\mathcal{M}(\delta)}^2, 
\end{align}
where $A$ and $B$ are defined by \eqref{eq:DefAB}, 
$a$, $b$ and $D_0$ are defined by \eqref{eq:DefabD0}, $\mathcal{M}(\delta)$ is given by \eqref{eq:DefccM} with $x_N=\delta$ and  
\begin{align} \label{eq:defDis}
\begin{aligned}
D_1&=-B^3+AB^2+3A^2B+A^3, \quad D_2=B^3-AB^2+3A^2B+A^3, \\ 
D_3&=B^3+AB^2+3A^2B-A^3. 
\end{aligned}
\end{align}

\begin{Rem} \label{Rem:highsingular}
The singularity appearing in the symbol of the solution formula \eqref{eq:solform} 
is higher than that for Neumann-Dirichlet boundary condition. 
In fact, the third and fourth rows of $\fb{L}$ coincide if $\xi'=0$ and, thereby, $\det{\fb{L}}\to0$ as $\xi'\to0$. 
This is because 
the upper and lower boundary conditions, which cause the third and fourth rows, respectively, are the same. 
In contrast to that, any two rows of $\fb{L}$ do not coincide when $\xi'=0$ and $\det{\fb{L}}\not\to 0$ as $\xi'\to0$ 
in the case of Neumann-Dirichlet boundary condition. 
\end{Rem}

Once we have $u_N$ and $\theta$, we can obtain $u_j$ with $j=1,\cdots,N-1$ 
from the $j$-th component of the first equation and the boundary condition in \eqref{eq:SRdh}, 
that is, as the solution of the following equation: 
\begin{align} \label{eq:PDEuj} 
\left\{
\begin{array}{ll}
\lambda u_j - \mu \Delta u_j = -\partial_j \theta& \text{in } \Omega, \\
\partial_N{u}_j = \mu^{-1}{\nu}_N{h}_j-\partial_j{u}_N & \text{on } \partial\Omega. 
\end{array}
\right.
\end{align}

We first prove the $\mathscr{R}$-boundedness for $\mathcal{S}_N(\lambda)$ and $\mathcal{T}(\lambda)$
by analyzing solution formulas \eqref{eq:solform} with the aid of lemmas in Section 5. 
By combining those with \eqref{eq:PDEuj},  
we then show the $\mathscr{R}$-boundedness for $\mathcal{S}_j(\lambda)$ with $j=1,\cdots,N-1$, 
which is performed in Section 6.

\section{Technical lemmas} \label{sec:lem}


In this section, we introduce some lemmas to establish Theorem \ref{Thm:Rbdddh}. 
To this end, first of all, we introduce some classes of symbols. 
Let $m(\lambda,\xi')$ be a function defined on $\Sigma_{\varepsilon,\gamma_0}\times(\mathbb{R}^{N-1}\setminus\{0\})$ 
with $0<\varepsilon<\pi/2$ and $\gamma_0\geq0$. 
For all $s \in \mathbb{R}$ and $k=1, 2$,  
$m(\lambda, \xi')$ is said to be a multiplier of order $s$ with type $k$ 
if it satisfies the following two conditions: 
\begin{itemize}
\item[i)] 
$m(\lambda,\xi')$ is infinitely many times differentiable with respect to $\xi'$ and $\tau$ where $\lambda = \gamma+i\tau$. 
\item[ii)] 
For any $\ell=0,1$, multi-index $\alpha' \in \mathbb{N}_0^{N-1}$ and $(\lambda,\xi') \in \Sigma_{\varepsilon,\gamma_0}\times(\mathbb{R}^{N-1}\setminus\{0\})$, 
\begin{align}
\left|(\tau{\partial}_\tau)^\ell \partial_{\xi'}^{\alpha'} m(\lambda,\xi')\right| \leq C_{\alpha'}\left(|\lambda|^{1/2}+|\xi'|\right)^{s-|\alpha'|} 
\end{align}
when $k=1$, and 
\begin{align}
\left|(\tau{\partial}_\tau)^\ell \partial_{\xi'}^{\alpha'} m(\lambda,\xi')\right| \leq C_{\alpha'}\left(|\lambda|^{1/2}+|\xi'|\right)^s|\xi'|^{-|\alpha'|} 
\end{align}
when $k=2$. 
\end{itemize}
We set 
\begin{align} \label{eq:DefbbM}
\mathbb{M}_{s,k,\varepsilon,\gamma_0} = \{ m(\lambda,\xi') \mid m(\lambda,\xi') \text{ is a multiplier of order $s$ with type $k$} \}. 
\end{align}
By the definition and the Leibniz rule, we have the following properties (see \cite{ShibaShimi12MSJ}). 

\begin{Lem} \label{Lem:MultiplierClass}
Let $s, s_1, s_2\in{\mathbb{R}}$, $k_1,k_2=1,2$, $0<\varepsilon<\pi/2$ and $\gamma_0\geq0$. 
\begin{itemize}
\item[a)] We have $\mathbb{M}_{s,1,\varepsilon,\gamma_0} \subset \mathbb{M}_{s,2,\varepsilon,\gamma_0}$. 
\item[b)] Given $m_j \in \mathbb{M}_{s_j,k_j,\varepsilon,\gamma_0}$ ($j=1,2$), 
we have $m_1m_2 \in \mathbb{M}_{s_1+s_2,\max\{k_1,k_2\},\varepsilon,\gamma_0}$. 
\end{itemize}
\end{Lem}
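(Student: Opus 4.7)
The plan is to verify both statements directly from the definition in \eqref{eq:DefbbM}, using only the elementary inequality $|\lambda|^{1/2}+|\xi'|\geq|\xi'|$ on $\Sigma_{\varepsilon,\gamma_0}\times(\mathbb{R}^{N-1}\setminus\{0\})$, together with the Leibniz rule. Infinite differentiability of products and of the single multiplier is automatic from that of the factors, so in each part it suffices to establish the pointwise derivative bounds.

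For part (a), I would note that for any multi-index $\alpha'$,
\begin{align}
(|\lambda|^{1/2}+|\xi'|)^{s-|\alpha'|} = (|\lambda|^{1/2}+|\xi'|)^{s}\,(|\lambda|^{1/2}+|\xi'|)^{-|\alpha'|} \leq (|\lambda|^{1/2}+|\xi'|)^{s}\,|\xi'|^{-|\alpha'|}.
\end{align}
Hence any type-$1$ bound of order $s$ immediately upgrades to the corresponding type-$2$ bound of the same order, proving the inclusion.

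For part (b), I would apply the Leibniz rule both in $\xi'$ (standard) and in $\tau$, using that $\tau\partial_\tau$ is a derivation on products. For $\ell=0,1$ this produces a finite sum of terms of the form $[(\tau\partial_\tau)^{\ell_1}\partial_{\xi'}^{\beta'_1}m_1]\cdot[(\tau\partial_\tau)^{\ell_2}\partial_{\xi'}^{\beta'_2}m_2]$ with $\ell_1+\ell_2=\ell$ and $\beta'_1+\beta'_2=\alpha'$. On each such term I plug in the defining bounds for $m_j\in\mathbb{M}_{s_j,k_j,\varepsilon,\gamma_0}$ and perform a case distinction on $(k_1,k_2)$. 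When $k_1=k_2=1$, the exponents combine as $(|\lambda|^{1/2}+|\xi'|)^{s_1+s_2-|\alpha'|}$ via $|\beta'_1|+|\beta'_2|=|\alpha'|$, giving type $1$. When $k_1=k_2=2$, the product gives $(|\lambda|^{1/2}+|\xi'|)^{s_1+s_2}|\xi'|^{-|\alpha'|}$ directly, giving type $2$. In the mixed case, I invoke (a) (equivalently, the same elementary inequality) to convert the type-$1$ factor's derivative weight $(|\lambda|^{1/2}+|\xi'|)^{-|\beta'_1|}$ into $|\xi'|^{-|\beta'_1|}$, and then multiply to obtain $(|\lambda|^{1/2}+|\xi'|)^{s_1+s_2}|\xi'|^{-|\alpha'|}$. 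In all cases the resulting exponent matches order $s_1+s_2$ with type $\max\{k_1,k_2\}$.

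There is no substantial obstacle; the only point requiring care is the bookkeeping in the mixed case, to ensure the final power of $|\xi'|$ is exactly $-|\alpha'|$ rather than, say, $-|\beta'_2|$. This is precisely what the inequality underlying part (a) secures, so part (a) is really the workhorse behind part (b) as well.
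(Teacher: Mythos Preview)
Your proof is correct and follows exactly the approach the paper indicates: the paper does not give a detailed argument but simply states that the lemma follows ``by the definition and the Leibniz rule'' with a reference to \cite{ShibaShimi12MSJ}, which is precisely what you have carried out. Your careful bookkeeping in the mixed case, reducing it via part (a), is the only nontrivial point and you have handled it correctly.
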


Let $A$, $B$ and $\mathcal{M}(x_N)$ be given by \eqref{eq:DefAB} and \eqref{eq:DefccM}. We set 
\begin{align} \label{eq:Defki}
k_i(x_N) = \left\{
\begin{array}{ll}
e^{-Bx_N}, & i=1, \\ 
e^{-Ax_N}, & i=2, \\ 
B{\mathcal{M}}(x_N), & i=3, 
\end{array}
\right.
\quad (0<x_N<\delta). 
\end{align}

\begin{Lem} \label{Lem:EstiSymbol}
Let $0<\varepsilon<\pi/2$ and $s \in \mathbb{R}$. 
For $x_N, y_N \in (0,\delta)$, $i,i_1,i_2=1,2,3$, $\ell=0,1$, 
$(\lambda,\xi') \in \Sigma_{\varepsilon,0} \times (\mathbb{R}^{N-1}\setminus\{0\})$ 
and multi-index $\alpha'\in{\mathbb{N}}_0^{N-1}$, 
we have the following estimates: 
\begin{gather} 
\begin{gathered} \label{esti:equivB}
c_{\varepsilon,\mu}(|\lambda|^{1/2}+A)\leq \text{Re\,} B \leq |B| \leq C_\mu(|\lambda|^{1/2}+A), \\ 
c_{\varepsilon,\mu}(|\lambda|^{1/2}+A)^3\leq |D_3| \leq C_\mu(|\lambda|^{1/2}+A)^3, 
\end{gathered} \\ 
\begin{gathered} \label{esti:multiclassAB}
|(\tau{\partial}_\tau)^\ell{\partial_{\xi'}^{\alpha'}}A^s| \leq C_{\alpha',s}A^{s-|\alpha'|}, \quad 
B^s \in \mathbb{M}_{s,1,\varepsilon,0}, \quad D_3^s \in \mathbb{M}_{3s,2,\varepsilon,0}, 
\end{gathered} \\ 
\begin{aligned} \label{esti:multiclasski}
&|(\tau{\partial}_\tau)^\ell{\partial_{\xi'}^{\alpha'}}{k_1(x_N)}| 
\leq C_{\alpha',\varepsilon,\mu}(|\lambda|^{1/2}+A)^{-|\alpha'|}e^{-c_{\varepsilon,\mu}(|\lambda|^{1/2}+A)x_N}, \\ 
&|(\tau{\partial}_\tau)^\ell{\partial_{\xi'}^{\alpha'}}k_i(x_N)| 
\leq C_{\alpha',\varepsilon,\mu}A^{-|\alpha'|}e^{-c_{\varepsilon,\mu}Ax_N}, 
\end{aligned} \\
\begin{aligned} \label{esti:multiclasskis}
&|(\tau{\partial}_\tau)^\ell{\partial_{\xi'}^{\alpha'}}{k_1(x_N)k_1(y_N)}| 
\leq C_{\alpha',\varepsilon,\mu}(|\lambda|^{1/2}+A)^{-|\alpha'|}e^{-c_{\varepsilon,\mu}(|\lambda|^{1/2}+A)(x_N+y_N)}, \\ 
&|(\tau{\partial}_\tau)^\ell{\partial_{\xi'}^{\alpha'}}k_{i_1}(x_N)k_{i_2}(y_N)| 
\leq C_{\alpha',\varepsilon,\mu}A^{-|\alpha'|}e^{-c_{\varepsilon,\mu}A(x_N+y_N)}. 
\end{aligned}
\end{gather}
Here, $D_3=B^3+AB^2+3A^2B-A^3$, which is defined in \eqref{eq:defDis}. 
\end{Lem}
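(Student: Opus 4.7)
The plan begins with the two-sided size estimates in \eqref{esti:equivB}. Since $\lambda\in\Sigma_{\varepsilon,0}$ the quantity $\mu^{-1}\lambda+A^2$ stays in a sector of opening strictly less than $2\pi$ around the positive real axis, with modulus comparable to $|\lambda|+A^2$, so the principal square root $B$ lies in the closed sector $\{|\arg z|\le(\pi-\varepsilon)/2\}$, which instantly yields $\mathrm{Re}\,B\ge|B|\sin(\varepsilon/2)$ and $|B|\sim|\lambda|^{1/2}+A$. For the corresponding bound on $D_3$ I would exploit the identity $D_3=\mu^{-1}\lambda(B+A)+4A^2B$, obtained from $B^2-A^2=\mu^{-1}\lambda$. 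Introducing $\rho=(|\lambda|^{1/2}+A)^{-1}$, the rescaled quantity $\rho^3D_3$ is a continuous function on the compact parameter set $\{\tilde A+|\tilde\lambda|^{1/2}=1,\ \tilde\lambda\in\overline{\Sigma_{\varepsilon,0}}\}$; nonvanishing there gives the uniform lower bound. To check nonvanishing I would argue that $D_3=0$ forces $t=B/A$ to be a root of $P(t)=t^3+t^2+3t-1$. The unique real root lies on the positive real axis and would force $\mu^{-1}\lambda/A^2$ onto the negative real axis, which is excluded; the two complex roots lie in the left half-plane while $t$ is a principal square root, and a direct check shows $P(-t_j)=2t_j^2-2\neq0$, so the polynomial cannot vanish on admissible $t$.

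The multiplier claims \eqref{esti:multiclassAB} then follow from these size bounds combined with induction on $|\alpha'|+\ell$ and Lemma \ref{Lem:MultiplierClass}. For $A^s$ the chain rule gives $\partial_{\xi_j}A=\xi_j/A$, so each $\xi'$-derivative introduces an extra factor of $A^{-1}$, confirming the type 2 bound; no $\tau$-derivatives arise. For $B^s$ I would differentiate implicitly using $2B\partial_{\xi_j}B=2\xi_j$ and $2\mu B\partial_\tau B=i$. Each $\xi'$-derivative of $B^s$ produces a factor $\xi_jB^{s-2}$ of modulus $\lesssim(|\lambda|^{1/2}+A)^{s-1}$, while $\tau\partial_\tau B^s$ is bounded by $|\lambda||B|^{s-2}\le(|\lambda|^{1/2}+A)^s$ because $|\tau|\le|\lambda|\le(|\lambda|^{1/2}+A)^2$, so $B^s\in\mathbb{M}_{s,1,\varepsilon,0}$. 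Finally $D_3^s$ is handled by first writing $D_3$ as a polynomial in $A$ and $B$ and combining the pieces through Lemma \ref{Lem:MultiplierClass}, giving $D_3\in\mathbb{M}_{3,2,\varepsilon,0}$; negative and non-integer powers are then obtained by induction using the reciprocal estimate that comes from the lower bound $|D_3|\gtrsim(|\lambda|^{1/2}+A)^3$ together with the Leibniz rule applied to $D_3^s\cdot D_3=D_3^{s+1}$.

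For the pointwise estimates \eqref{esti:multiclasski}--\eqref{esti:multiclasskis} the starting point is $|e^{-Bx_N}|\le e^{-c_{\varepsilon,\mu}(|\lambda|^{1/2}+A)x_N}$ and $|e^{-Ax_N}|=e^{-Ax_N}$ given by the real-part estimate on $B$. For $k_3(x_N)=B\mathcal{M}(x_N)$ the apparent singularity at $B=A$ is dissolved by the integral representation $\mathcal{M}(x_N)=-\int_0^{x_N}e^{-A(x_N-y)-By}\,dy$. Splitting into the regimes $|B-A|\ge|B|/2$, where $|B/(B-A)|$ is uniformly bounded and one uses the defining quotient directly, and $|B-A|<|B|/2$, where $|B|\sim A$ and one uses the mean-value integral, produces $|k_3(x_N)|\le Ce^{-c_{\varepsilon,\mu}Ax_N}$. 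Differentiation in $\xi'$ or $\tau$ brings down polynomial factors in $x_N,y_N$ times derivatives of $A,B$; these are absorbed into slightly weaker exponentials via $y^n e^{-\alpha y}\le C_n\alpha^{-n}e^{-\alpha y/2}$, producing exactly the factors of $(|\lambda|^{1/2}+A)^{-|\alpha'|}$ or $A^{-|\alpha'|}$ demanded by the statement, and the products $k_{i_1}(x_N)k_{i_2}(y_N)$ are treated in the same way since all the above estimates are multiplicative in the exponential. The main obstacle is the $k_3$ analysis near $B=A$: without the integral representation and the case split the factor $1/(B-A)$ would spoil the uniformity of the constants, and handling it correctly for every multi-index requires running the mean-value representation together with its differentiated versions in a single unified bound.
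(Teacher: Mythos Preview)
Your sketch is essentially correct and, in fact, supplies far more detail than the paper does: the paper's own proof simply cites \cite[Lemma 4.4]{ShibaShimi03DIE}, \cite[Lemma 5.2, Lemma 5.3]{ShibaShimi12MSJ} and \cite[Lemma 5.2]{Sai15MMAS} for the four groups of estimates, with the product bound \eqref{esti:multiclasskis} reduced to \eqref{esti:multiclasski} via the Leibniz rule. Your argument is therefore not a different route but a reconstruction of what those references contain; the sector bound for $B$, the compactness--plus--nonvanishing argument for $D_3$ via the identity $D_3=\mu^{-1}\lambda(B+A)+4A^2B$, the implicit differentiation of $B^s$, and the integral representation of $\mathcal{M}(x_N)$ together with the case split $|B-A|\gtrless|B|/2$ are exactly the standard ingredients.

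One remark is slightly muddled. In the nonvanishing check for $D_3$, once you have shown that the unique real root of $P(t)=t^3+t^2+3t-1$ lies in $(0,1)$ (forcing $\lambda<0$) and that the two complex roots have real part in $(-1,-\tfrac12)$ (hence cannot equal $t=B/A$, which has strictly positive real part on $\overline{\Sigma_{\varepsilon,0}}$), the argument is complete; the additional computation $P(-t_j)=2t_j^2-2\neq0$ is correct as an identity but does not contribute to the conclusion and can be dropped. You should also state separately that at the boundary point $\tilde A=0$ of the compact set one has $D_3=B^3\neq0$ directly, since the substitution $t=B/A$ is unavailable there.
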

\begin{proof}
The estimates \eqref{esti:equivB}, \eqref{esti:multiclassAB} and \eqref{esti:multiclasski} are proved 
in \cite[Lemma 4.4]{ShibaShimi03DIE}, \cite[Lemma 5.2]{ShibaShimi12MSJ}, \cite[Lemma 5.3]{ShibaShimi12MSJ}, respectively. 
The inequality \eqref{esti:multiclasskis} is obtained 
by  \eqref{esti:multiclasski} and the Leibniz rule as in \cite[Lemma 5.2]{Sai15MMAS}. 
\end{proof}

In view of Lemma \ref{Lem:Rbddprop} ii), 
we prove the $\mathscr{R}$-boundedness for each term of the solution formula \eqref{eq:solform}.
Let $\varphi_0$ and $\varphi_\delta$ be given by \eqref{eq:varphi0d}, and define 
\begin{align} \label{eq:DefCO}
\Phi_i(y_N) = \left\{
\begin{array}{ll}
\varphi_\delta(y_N), & i=1, \\
\varphi_0(y_N), & i=2, \\
\varphi_0'(y_N)=-\varphi_\delta'(y_N), & i=3. 
\end{array}
\right. 
\end{align}
In the same fashion as in \cite{Sai15MMAS}, we have the following lemma. 
The symbols of the operators and the range of $\gamma_0$ are slightly generalized as compared with those in \cite{Sai15MMAS}. 

\begin{Lem}[{\cite[Lemma 5.3]{Sai15MMAS}}] \label{Lem:RbddV} 
Let $0<\varepsilon<\pi/2$, $\gamma_0\geq0$ and $m_k \in \mathbb{M}_{0,k,\varepsilon,\gamma_0}$($k=1, 2$). 
We define the operators  
\begin{align} \notag 
[K_1(\lambda)h](x)&=
\int_0^\delta \mathscr{F}_{\xi'}^{-1}\left[ \Phi_i(y_N) m_1(\lambda,\xi') B k_1(d_{\ell_1}(x_N))k_1(d_{\ell_2}(y_N)) \widehat{h}(\xi',y_N) \right](x') \, dy_N, \\ 
\label{eq:DefK2}
[K_2(\lambda)h](x)&=
\int_0^\delta \mathscr{F}_{\xi'}^{-1}\left[ \Phi_i(y_N) m_2(\lambda,\xi') A k_{i_1}(d_{\ell_1}(x_N)) k_{i_2}(d_{\ell_2}(y_N)) \widehat{h}(\xi',y_N) \right](x') \, dy_N 
\end{align}
for $\lambda\in\Sigma_{\varepsilon,\gamma_0}$, $i,i_1,i_2=1,2,3$ and $\ell_1,\ell_2=1,2$, 
where $d_\ell(x_N)$ is given by \eqref{eq:Defdl}. 
Then, for any $1<q<\infty$ and $\ell=0,1$, there hold 
\begin{align}
\mathscr{R}_{\mathcal{L}(L_q(\Omega))}(\{ (\tau{\partial}_\tau)^\ell K_1(\lambda) \mid \lambda\in{\Sigma_{\varepsilon,\gamma_0}} \})&\leq C_{N,q,\varepsilon,\gamma_0,m_1}, \\ 
\mathscr{R}_{\mathcal{L}(L_q(\Omega))}(\{ (\tau{\partial}_\tau)^\ell K_2(\lambda) \mid \lambda\in{\Sigma_{\varepsilon,\gamma_0}} \})&\leq C_{N,q,\varepsilon,\gamma_0,m_2}. 
\end{align} 
Here, $\Sigma_{\varepsilon,\gamma_0}$ is given by \eqref{eq:resoset} and $\lambda=\gamma+i\tau$. 
\end{Lem}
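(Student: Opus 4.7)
My plan is to follow the proof of Saito \cite[Lemma 5.3]{Sai15MMAS} essentially verbatim, with only cosmetic modifications to accommodate (i) a general type-$k$ multiplier $m_k\in\mathbb{M}_{0,k,\varepsilon,\gamma_0}$ (in place of the concrete symbols handled in \cite{Sai15MMAS}) and (ii) the slightly broader parameter range $\gamma_0\ge 0$. The central idea is to view each $K_k(\lambda)$ as a parametrized integral operator on $\Omega$,
\begin{align*}
[K_k(\lambda)h](x',x_N)=\int_0^\delta\!\int_{\mathbb{R}^{N-1}}\mathcal{K}_k(\lambda;x'-y',x_N,y_N)\,\Phi_i(y_N)\,h(y',y_N)\,dy'\,dy_N,
\end{align*}
whose kernel $\mathcal{K}_k$ is the inverse partial Fourier transform of the bracketed symbol in \eqref{eq:DefK2}, and then to invoke a general $\mathscr{R}$-boundedness criterion for such kernels.

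The first step, which I view as the main technical one, is to derive Calderón--Zygmund-type pointwise bounds for $\mathcal{K}_k(\lambda;z',x_N,y_N)$ and for $(\tau\partial_\tau)\mathcal{K}_k$, uniform in $\lambda\in\Sigma_{\varepsilon,\gamma_0}$. This is done by combining Lemma \ref{Lem:MultiplierClass}(b) with the pointwise symbol bounds \eqref{esti:equivB}--\eqref{esti:multiclasskis}, the Leibniz rule, and the standard integration-by-parts procedure in $\xi'$. For $K_1$ the exponential factor $e^{-B(d_{\ell_1}(x_N)+d_{\ell_2}(y_N))}$ supplies decay on the scale $(|\lambda|^{1/2}+|\xi'|)^{-1}$ inside the Fourier inversion, while for $K_2$ only the $A$-decay is available, matching the weaker type-$2$ estimates; in both cases, the factor $B$ or $A$ multiplying the exponential is absorbed either by the elementary bound $s\,e^{-csz}\le C_c/z$ applied on the part of $\xi'$-space where the exponential is active, or by the $|\xi'|^{-|\alpha'|}$ decay produced by integration by parts.

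Once the kernel estimates are in hand, I would invoke the general $\mathscr{R}$-boundedness criterion for integral operators on the layer whose kernel satisfies Calderón--Zygmund-type estimates in both $z'$ and the normal-variable difference $d_{\ell_1}(x_N)+d_{\ell_2}(y_N)$; this is exactly \cite[Lemma 5.5]{Sai15MMAS}, applied in both the type-$1$ and type-$2$ forms (see also \cite[Section 3]{ShibaShimi12MSJ}). It yields the $\mathscr{R}$-boundedness of $\{(\tau\partial_\tau)^\ell K_k(\lambda):\lambda\in\Sigma_{\varepsilon,\gamma_0}\}$ on $L_q(\Omega)$ immediately. The cut-offs $\Phi_i(y_N)$ are uniformly bounded on $(0,\delta)$ and play no structural role. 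The difficulty is entirely in the symbol-to-kernel step: under repeated Leibniz differentiation of $m_k\,B\,k_1(d_{\ell_1}(x_N))k_1(d_{\ell_2}(y_N))$ (or its type-$2$ analogue) in $\xi'$, many terms appear carrying either extra factors $d_{\ell_j}\le\delta$ (harmless) or extra negative powers of $B$ (which only improve decay), and one must organize the book-keeping so that enough exponential decay in $(|\lambda|^{1/2}+|\xi'|)(d_{\ell_1}(x_N)+d_{\ell_2}(y_N))$ survives to match the hypotheses of the general criterion. Once this is arranged, the argument concludes mechanically.
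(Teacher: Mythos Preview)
Your proposal is essentially correct and follows the same approach as the paper (which simply cites \cite[Lemma 5.3]{Sai15MMAS} and sketches the idea in Remark~\ref{Rem:TreatHS}): write $K_k(\lambda)$ as an integral operator with kernel $k_\lambda^k(z',x_N,y_N)$ given by the inverse partial Fourier transform of the symbol, derive the pointwise bound $|k_\lambda^k(z',x_N,y_N)|\le C|(z',d_{\ell_1}(x_N)+d_{\ell_2}(y_N))|^{-N}$ via the symbol estimates \eqref{esti:equivB}--\eqref{esti:multiclasskis} and integration by parts in $\xi'$, and then feed this into the singular-integral $\mathscr{R}$-boundedness machinery of \cite[Section 3]{ShibaShimi12MSJ}.

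One citation slip: the ``general $\mathscr{R}$-boundedness criterion'' you invoke is \emph{not} \cite[Lemma 5.5]{Sai15MMAS}. In the numbering used by this paper, Lemma 5.5 of \cite{Sai15MMAS} is the \emph{higher-singularity} lemma (our Lemma~\ref{Lem:RbddVS} for $K_3$), whose proof proceeds by applying the tangential Fourier-multiplier $\mathscr{R}$-bound (Lemma~\ref{Lem:Rbddw}) slice-by-slice and then H\"older in the normal variable---a genuinely different mechanism that does not require the $|(z',\cdot)|^{-N}$ kernel decay. For $K_1,K_2$ the relevant criterion is the one embedded in the proof of \cite[Lemma 5.3]{Sai15MMAS} itself (cf.\ \cite[Section 3]{ShibaShimi12MSJ}), which uses the full $N$-dimensional Calder\'on--Zygmund-type kernel bound you describe. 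Aside from this mislabeling, your plan is sound.
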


In addition, by the argument in \cite[Lemma 5.5]{Sai15MMAS}, 
we have the lemma to treat operator families with higher singularity at the origin in the Fourier side.
As in Lemma \ref{Lem:RbddV}, the symbol of the operator and the range of $\gamma_0$ can be slightly generalized. 
\begin{Lem}[{\cite[Lemma 5.5]{Sai15MMAS}}] \label{Lem:RbddVS} 
Let $0<\varepsilon<\pi/2$ and $\gamma_0\geq 0$.  
Recall that $\mathbb{M}_{s,k,\varepsilon,\gamma_0}$, $\Phi_i(y_N)$, $k_i(x_N)$, $d_\ell(x_N)$ and $\Sigma_{\varepsilon,\gamma_0}$ 
are given by \eqref{eq:DefbbM}, \eqref{eq:DefCO}, \eqref{eq:Defki}, \eqref{eq:Defdl} and \eqref{eq:resoset}, respectively, 
and write $\lambda=\gamma+i\tau$ for $\lambda\in{\Sigma_{\varepsilon,\gamma_0}}$. 
Assume that $m_2 \in \mathbb{M}_{0,2,\varepsilon,\gamma_0}$. 
We define the operator  
\begin{align} \label{eq:DefK3}
[K_3(\lambda) h](x) &= 
\int_0^\delta \mathscr{F}_{\xi'}^{-1}\left[ \Phi_i(y_N) m_2(\lambda,\xi') k_{i_1}(d_{\ell_1}(x_N)) k_{i_2}(d_{\ell_2}(y_N)) \widehat{h}(\xi',y_N) \right](x') \, dy_N 
\end{align}
for $\lambda\in\Sigma_{\varepsilon,\gamma_0}$, $i,i_1,i_2=1,2,3$ and $\ell_1,\ell_2=1,2$. 
Then, for any $1<q<\infty$ and $\ell =0,1$, we have 
\begin{align}
\mathscr{R}_{\mathcal{L}(L_q(\Omega))}(\{ (\tau{\partial}_\tau)^\ell K_3(\lambda) \mid \lambda\in{\Sigma_{\varepsilon,\gamma_0}} \}){\leq C_{N,q,\varepsilon,\gamma_0,\delta,m_2}}. 
\end{align}
\end{Lem}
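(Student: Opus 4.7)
The plan is to follow closely the strategy of Saito \cite[Lemma 5.5]{Sai15MMAS}, adapted to the present range $\gamma_0\ge 0$. For each fixed pair $(x_N,y_N)\in(0,\delta)^2$, I would regard $K_3(\lambda)$ as a Fourier multiplier on $\mathbb{R}^{N-1}$ with symbol
\begin{equation*}
n(\lambda,\xi';x_N,y_N) = \Phi_i(y_N)\,m_2(\lambda,\xi')\,k_{i_1}(d_{\ell_1}(x_N))\,k_{i_2}(d_{\ell_2}(y_N)),
\end{equation*}
and then handle the $y_N$-integration by Minkowski's inequality, using that the interval $(0,\delta)$ is bounded. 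Because $m_2$ is of type $2$, hence strictly more singular at $\xi'=0$ than a type-$1$ symbol, the argument used for $K_1$ and $K_2$ in Lemma \ref{Lem:RbddV} does not apply directly: there the extra factor $B$ or $A$ in front of the symbol absorbed precisely this singularity.

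Next, I would split $n$ by a smooth cutoff: let $\chi\in C_0^\infty(\mathbb{R}^{N-1})$ equal $1$ near $\xi'=0$ and be supported in $|\xi'|\le 1$, and write $n=\chi n+(1-\chi)n$. On the support of $1-\chi$ we have $|\xi'|\ge 1/2$, so $|\xi'|^{-|\alpha'|}\le C(|\lambda|^{1/2}+|\xi'|)^{-|\alpha'|}$, whence $(1-\chi)m_2\in\mathbb{M}_{0,1,\varepsilon,\gamma_0}$. Combining Lemma \ref{Lem:MultiplierClass} with the pointwise bounds of Lemma \ref{Lem:EstiSymbol} on $k_{i_1},k_{i_2}$ and their derivatives, the contribution of this high-frequency piece to $K_3$ is then dealt with as for $K_2$ in Lemma \ref{Lem:RbddV}, yielding the desired $\mathscr{R}$-bound uniform in $(x_N,y_N)$.

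The main obstacle is the low-frequency piece $\chi n$, where $\chi m_2$ is compactly supported but still only satisfies $|\partial_{\xi'}^{\alpha'}(\chi m_2)|\le C|\xi'|^{-|\alpha'|}$. Here I would follow Saito's strategy of producing pointwise estimates of the inverse Fourier transform
\begin{equation*}
\kappa(\lambda,x';x_N,y_N) = \mathscr{F}_{\xi'}^{-1}\bigl[\chi(\xi')\,m_2(\lambda,\xi')\,k_{i_1}(d_{\ell_1}(x_N))\,k_{i_2}(d_{\ell_2}(y_N))\bigr](x')
\end{equation*}
of Calder\'on--Zygmund type, with decay $|x'|^{-(N-1)}$ at infinity coming from the smoothness of $\chi m_2$ at the origin and with Mikhlin-type derivative bounds in $x'$. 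The parametrized vector-valued multiplier / singular integral theory (as exploited in \cite[Lemma 5.5]{Sai15MMAS} and compatible with $(\tau\partial_\tau)^\ell$ since the bounds on $m_2$ are uniform in $\ell=0,1$) then yields $\mathscr{R}$-boundedness of this family on $L_q(\mathbb{R}^{N-1})$, uniformly in $(x_N,y_N)\in(0,\delta)^2$.

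Finally, with the uniform $\mathscr{R}$-bound in the tangential variables established, the $y_N$-integral over $(0,\delta)$ is absorbed by Minkowski's inequality together with the finiteness of $\delta$, so that the scalar factor $\Phi_i(y_N)$ and the normal exponentials produce only bounded constants depending on $\delta$. The hard part is expected to be the low-frequency pointwise kernel estimate, which must be carried out carefully enough that the constants do not deteriorate as $|\lambda|\to 0$; the point is that the singularity of $m_2$ at $\xi'=0$ is tamed by the cutoff $\chi$ rather than by lower bounds on $|\lambda|$, which is exactly why $\gamma_0=0$ is permitted in the statement.
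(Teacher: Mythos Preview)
Your approach would work, but it is considerably more elaborate than what the paper actually does. In particular, no frequency cut-off and no pointwise kernel estimates are needed here. The paper observes directly that the full symbol
\[
n(\lambda,\xi';x_N,y_N)=\Phi_i(y_N)\,m_2(\lambda,\xi')\,k_{i_1}(d_{\ell_1}(x_N))\,k_{i_2}(d_{\ell_2}(y_N))
\]
already satisfies the Mikhlin-type bound $|(\tau\partial_\tau)^\ell\partial_{\xi'}^{\alpha'}n|\le C_{\alpha'}|\xi'|^{-|\alpha'|}$ uniformly in $(x_N,y_N,\lambda)$, by Lemma~\ref{Lem:EstiSymbol} and the definition of $\mathbb{M}_{0,2,\varepsilon,\gamma_0}$. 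One then applies Lemma~\ref{Lem:Rbddw} (the abstract Mikhlin-type $\mathscr{R}$-boundedness criterion on $L_q(\mathbb{R}^{N-1})$) to the frozen-$(x_N,y_N)$ operators $K_{30}(\lambda;x_N,y_N)$, obtaining an $\mathscr{R}$-bound on $L_q(\mathbb{R}^{N-1})$ uniform in $(x_N,y_N)$. The normal integration is then handled not by Minkowski but by working directly with the definition of $\mathscr{R}$-boundedness at exponent $p=q$ and applying H\"older's inequality to the $y_N$-integral, which produces the factor $\delta^{q}$.

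What your approach buys is a decomposition that makes contact with the $K_2$-argument of Lemma~\ref{Lem:RbddV} on the high-frequency side; what it costs is an unnecessary low-frequency kernel analysis that Lemma~\ref{Lem:Rbddw} renders superfluous. The cut-off procedure you have in mind (with $\zeta_0,\zeta_1$) does appear in the paper, but later, in Section~\ref{Sec:Pf}, to handle the inhomogeneous behaviour of $(\det\fb{L})^{-1}$ in the solution formula---not in the proof of this lemma.
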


\begin{Rem} \label{Rem:TreatHS}
As mentioned in Remark \ref{Rem:highsingular}, 
our symbol possesses higher singularity at $\xi'=0$ than the case of Neuman-Dirichlet boundary condition. 
Following \cite{ShibaShimi12MSJ}, which studies the case of the half space, as well as \cite[Lemma 5.3]{Sai15MMAS}, 
we recall the idea of the proof of Lemma \ref{Lem:RbddV} (lower singularity case).  
We rewrite $K_j(\lambda) \ (j=1,2)$ as 
\begin{align} \label{eq:SIinRem}
K_j(\lambda)h = \int_\Omega k_\lambda^j(x'-y',x_N,y_N)h(y',y_N) \,dy 
\end{align}
where 
\begin{align*}
k_\lambda^j(z',x_N,y_N)=\left\{
\begin{aligned}
&\mathscr{F}_{\xi'}^{-1}\left[ \Phi_i(y_N) m_1(\lambda,\xi') B k_1(d_{\ell_1}(x_N))k_1(d_{\ell_2}(y_N)) \right](z') && (j=1), \\
&\mathscr{F}_{\xi'}^{-1}\left[ \Phi_i(y_N) m_2(\lambda,\xi') A k_{i_1}(d_{\ell_1}(x_N)) k_{i_2}(d_{\ell_2}(y_N))\right](z') && (j=2). 
\end{aligned}
\right.
\end{align*}
Then we have 
\begin{align} \label{eq:EstkerLj}
|k_\lambda^j(z',x_N,y_N)|\leq C|(z',x_N,y_N)|^{-N}, 
\end{align}
which implies desired estimate, see the proof of \cite[Lemma 5.3]{Sai15MMAS} for details. 
In contrast to that, for the higher singularity case, 
we only know that the kernel $k_\lambda^3(z',x_N,y_N)$ of the operator $K_3(\lambda)$, 
which is defined similarly, decays with slower rate 
\begin{align}
|k_\lambda^3(z',x_N,y_N)|\leq C|(z',x_N,y_N)|^{-(N-1)}. 
\end{align}
However, we can estimate the operator families $\{K_3(\lambda)\}_\lambda$ by applying Lemma \ref{Lem:Rbddw} below 
only to the tangential direction; 
the lemma is proved by \cite[Theorem 3.2]{GirWei03PAM}, see also \cite[Theorem 3.3]{EnoShiba13FE}. 
This is performed in the proof of Lemma \ref{Lem:RbddVS}. 
\end{Rem}

\begin{Lem} 
\label{Lem:Rbddw}
Let $\Lambda$ be a set and $\{m_\lambda\}_{\lambda\in\Lambda}\subset C^\infty(\mathbb{R}^d\setminus\{0\})$ satisfy 
\begin{align}
|\partial_\xi^\alpha m_\lambda(\xi)|\leq C_\alpha |\xi|^{-|\alpha|} 
\end{align}
for any $\lambda\in\Lambda$, $\xi\in{\mathbb{R}^d\setminus\{0\}}$ and multi-index $\alpha\in{\mathbb{N}}_0^d$. 
Then, for $q\in(1,\infty)$, 
the operator family $\{T_\lambda\mid\lambda\in\Lambda\}$ given by $T_\lambda f=\mathscr{F}^{-1}_\xi m_\lambda{\mathscr{F}}_x f$ satisfies 
\begin{align}
\mathscr{R}_{\mathcal{L}(L_q(\mathbb{R}^d))}(\{T_\lambda\mid\lambda\in\Lambda\}) \leq C_{N,q} \max_{|\alpha|\leq N+2}C_\alpha. 
\end{align}
\end{Lem}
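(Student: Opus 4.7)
The plan is to reduce the assertion about R-boundedness of a family of scalar Fourier multipliers to the boundedness of a single operator-valued multiplier on $L_q(\mathbb{R}^d;\ell^2)$, and then invoke the operator-valued Mihlin-type theorem of Girardi--Weis that is cited just above the statement.

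First I would use Khintchine's (equivalently, Kahane's) inequality pointwise in $x$, together with Fubini, to rewrite the randomized norm in Definition \ref{Def:Rbdd} as a square function. Concretely, for any finite collection $\{\lambda_j\}_{j=1}^m\subset\Lambda$ and $\{f_j\}\subset L_q(\mathbb{R}^d)$,
\begin{align}
\int_0^1\Bigl\|\sum_{j=1}^m r_j(u) T_{\lambda_j} f_j\Bigr\|_{L_q(\mathbb{R}^d)}^q\,du
\asymp \Bigl\|\bigl(\textstyle\sum_{j=1}^m|T_{\lambda_j}f_j|^2\bigr)^{1/2}\Bigr\|_{L_q(\mathbb{R}^d)}^q,
\end{align}
with constants depending only on $q$, and similarly on the right-hand side. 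Hence R-boundedness of $\{T_\lambda\}$ on $L_q(\mathbb{R}^d)$ is equivalent to the vector-valued bound
\begin{align}
\Bigl\|\bigl(\textstyle\sum_j|T_{\lambda_j}f_j|^2\bigr)^{1/2}\Bigr\|_{L_q}
\leq C\Bigl\|\bigl(\textstyle\sum_j|f_j|^2\bigr)^{1/2}\Bigr\|_{L_q},
\end{align}
with $C$ independent of $m$.

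Second, I would view $(f_j)_j$ as an element of $L_q(\mathbb{R}^d;\ell^2_m)$ and introduce the diagonal operator-valued symbol $M(\xi)\in\mathcal{L}(\ell^2_m)$ defined by $M(\xi)(a_j)_j = (m_{\lambda_j}(\xi)a_j)_j$. The operator $\widetilde{T}:(f_j)_j\mapsto(T_{\lambda_j}f_j)_j$ is precisely the Fourier multiplier with symbol $M(\xi)$, and the inequality above is exactly the $L_q(\mathbb{R}^d;\ell^2)$-boundedness of $\widetilde{T}$ with constant independent of $m$. Now I would apply \cite[Theorem 3.2]{GirWei03PAM}: it suffices to check that the family
\begin{align}
\{\,|\xi|^{|\alpha|}\partial_\xi^\alpha M(\xi)\mid \xi\in\mathbb{R}^d\setminus\{0\},\ |\alpha|\leq N+2\,\}
\end{align}
is R-bounded in $\mathcal{L}(\ell^2)$. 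Since $\ell^2$ is a Hilbert space, R-boundedness coincides with uniform boundedness, and for the diagonal symbol
\begin{align}
\bigl\||\xi|^{|\alpha|}\partial_\xi^\alpha M(\xi)\bigr\|_{\mathcal{L}(\ell^2)}
= \sup_j |\xi|^{|\alpha|}|\partial_\xi^\alpha m_{\lambda_j}(\xi)| \leq C_\alpha
\end{align}
follows at once from the hypothesis. Thus Girardi--Weis yields $\|\widetilde{T}\|_{\mathcal{L}(L_q(\mathbb{R}^d;\ell^2))}\leq C_{N,q}\max_{|\alpha|\leq N+2}C_\alpha$, which rewrites as the desired R-bound.

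There is no substantial obstacle once the right framework is set: the only point requiring care is the passage from the definition of R-boundedness to an $\ell^2$-valued multiplier statement, which is precisely the content of Khintchine's inequality and the identification of R-bounds with uniform bounds in Hilbert space. The derivative count $|\alpha|\leq N+2$ in the conclusion is simply the one dictated by the version of the operator-valued Mihlin theorem employed; any equivalent scalar-valued Hörmander multiplier theorem (for Hilbert-valued functions) would produce a statement of the same form.
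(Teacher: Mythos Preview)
Your argument is correct and in fact provides more than the paper does: the paper treats this lemma as a known result, simply citing \cite[Theorem~3.2]{GirWei03PAM} (and \cite[Theorem~3.3]{EnoShiba13FE}) without further comment. Your sketch---Khintchine's inequality to pass to square functions, then a vector-valued Mihlin theorem on $L_q(\mathbb{R}^d;\ell^2)$ applied to the diagonal symbol, with $\mathscr{R}$-boundedness in $\mathcal{L}(\ell^2)$ reducing to uniform boundedness---is the standard way to unpack such a citation, and it is sound. One small caveat: check that the precise content of \cite[Theorem~3.2]{GirWei03PAM} is indeed the operator-valued multiplier criterion you invoke rather than already the scalar $\mathscr{R}$-boundedness statement itself (in which case your argument would be reproving the cited theorem via Weis' operator-valued Mihlin theorem, which is equally valid but a different attribution); either way the mathematics is correct.
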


\begin{proof}[Proof of Lemma \ref{Lem:RbddVS}]
Following Saito \cite[Lemmma 5.5]{Sai15MMAS}, 
we prove the estimate \eqref{eq:DefRbdd} in the definition of the $\mathscr{R}$-boundedness with $p=q$. 
Let $m\in{\mathbb{N}}$, $\{\lambda_j\}_{j=1}^m\subset{\Sigma_{\varepsilon,\gamma_0}}$ and $\{h_j\}_{j=1}^m\subset{L_q(\Omega)}$, 
and let $\{r_j\}_{j=1}^m$ be a sequence of independent, symmetric and $\{\pm1\}$-valued random variables on $(0,1)$. 
Define the operator $K_{30}(\lambda;x_N,y_N)$ for $\lambda\in{\Sigma_{\varepsilon,\gamma_0}}$ and $x_N,y_N\in(0,\delta)$ by  
\begin{align*}
K_{30}(\lambda;x_N,y_N)h_0(x') 
= \mathscr{F}_{\xi'}^{-1}\left[ \Phi_i(y_N) m_2(\lambda,\xi') k_{i_1}(d_{\ell_1}(x_N)) k_{i_2}(d_{\ell_2}(y_N)) \widehat{h_0}(\xi') \right](x') 
\end{align*}
so that
\begin{align}
[K_3(\lambda)h](x) = \int_0^\delta K_{30}(\lambda;x_N,y_N)[h(\cdot,y_N)] (x') \,dy_N. 
\end{align}
Note $\Phi_i(y_N)$ is constant with respect to $(\lambda,\xi')$ and bounded with respect to $y_N$, 
and so, by \eqref{esti:multiclasskis} and Lemma \ref{Lem:MultiplierClass}, 
\begin{align}
|(\tau{\partial}_\tau)^\ell{\partial_{\xi'}^{\alpha'}}	\left(\Phi_i(y_N) m_2(\lambda,\xi') k_{i_1}(d_{\ell_1}(x_N)) k_{i_2}(d_{\ell_2}(y_N))\right)|
\leq C_{N,q,\varepsilon,\gamma_0}|\xi'|^{-|\alpha'|}
\end{align}
for $(\lambda,\xi')\in{\Sigma_{\varepsilon,\gamma_0}}\times(\mathbb{R}^{N-1}\setminus\{0\})$. 
Therefore, by Lemma \ref{Lem:Rbddw} and the H\"{o}lder inequality, we have 
\begin{align} \notag 
&\int_0^1 \bigg\| \sum_{j=1}^m r_j(u)K_3(\lambda_j)h_j \bigg\|_{L_q(\Omega)}^q \,du \\ \notag
&= \int_0^1 \int_0^\delta \int_{\mathbb{R}^{N-1}}\left| \int_0^\delta 
\sum_{j=1}^m r_j(u)K_{30}(\lambda_j;x_N,y_N)h_j(\cdot,y_N) \,dy_N \right|^q \,dx' \,dx_N \,du \\ \notag 
&\leq\delta^{q-1} \int_0^1 \int_0^\delta \int_{\mathbb{R}^{N-1}} \int_0^\delta 
\bigg| \sum_{j=1}^m r_j(u)K_{30}(\lambda_j;x_N,y_N)h_j(\cdot,y_N) \bigg|^q \,dy_N \,dx' \,dx_N \,du \\ \notag 
&=\delta^{q-1} \int_0^1 \int_0^\delta \int_0^\delta 
\bigg\| \sum_{j=1}^m r_j(u)K_{30}(\lambda_j;x_N,y_N)h_j(\cdot,y_N) \bigg\|_{L_q(\mathbb{R}^{N-1})}^q \,dy_N \,dx_N \,du \\ \notag 
&\leq C_{N,q,\varepsilon,\gamma_0}\delta^{q-1} \int_0^1 \int_0^\delta \int_0^\delta 
\bigg\| \sum_{j=1}^m r_j(u)h_j(\cdot,y_N) \bigg\|_{L_q(\mathbb{R}^{N-1})}^q \,dy_N \,dx_N \,du \\ \notag 
&= C_{N,q,\varepsilon,\gamma_0}\delta^q \int_0^1 
\bigg\| \sum_{j=1}^m r_j(u)h_j \bigg\|_{L_q(\Omega)}^q \,du, 
\end{align}
which completes the proof. 
\end{proof}

By the lemmas above, the same argument as in \cite[Lemma 5.4]{Sai15MMAS} implies the following lemma. 

\begin{Lem} \label{Lem:DRbddV} 
Let $0<\varepsilon<\pi/2$, $\gamma_0\geq 0$ and $m_k \in \mathbb{M}_{-2,k,\varepsilon,\gamma_0}$, 
and let $K_j(\lambda)$ ($j=1,2,3$) be the operators given in Lemmas \ref{Lem:RbddV} and \ref{Lem:RbddVS}. 
For any $1<q<\infty$, $j=1,2,3$, $\ell =0,1$ and $1\leq m,n \leq N$, there hold 
\begin{align}
\mathscr{R}_{\mathcal{L}(L_q(\Omega))}(\{ (\tau{\partial}_\tau)^\ell \lambda K_j(\lambda) \mid \lambda\in{\Sigma_{\varepsilon,\gamma_0}} \})&\leq C_{N,q,\varepsilon,\gamma_0,\mu,\delta,m_k}, \\ 
\mathscr{R}_{\mathcal{L}(L_q(\Omega))}(\{ (\tau{\partial}_\tau)^\ell \gamma K_j(\lambda) \mid \lambda\in{\Sigma_{\varepsilon,\gamma_0}} \})&\leq C_{N,q,\varepsilon,\gamma_0,\mu,\delta,m_k}, \\ 
\mathscr{R}_{\mathcal{L}(L_q(\Omega))}(\{ (\tau{\partial}_\tau)^\ell \lambda^{1/2} \partial_m K_j(\lambda) \mid \lambda\in{\Sigma_{\varepsilon,\gamma_0}} \})&\leq C_{N,q,\varepsilon,\gamma_0,\mu,\delta,m_k}, \\
\mathscr{R}_{\mathcal{L}(L_q(\Omega))}(\{ (\tau{\partial}_\tau)^\ell \partial_m{\partial}_n K_j(\lambda) \mid \lambda\in{\Sigma_{\varepsilon,\gamma_0}} \})&\leq C_{N,q,\varepsilon,\gamma_0,\mu,\delta,m_k}, 
\end{align}
where $k=1$ when $j=1$ and $k=2$ when $j=2,3$. 
If $i_1=2$ (that is, $k_{i_1}(d_{\ell_1}(x_N))=e^{-Ad_{\ell_1}(x_N)}$) in \eqref{eq:DefK2} and \eqref{eq:DefK3}, 
and if $m_2 \in \mathbb{M}_{-2,2,\varepsilon,\gamma_0}$ is replaced by 
\begin{align} \label{eq:DefAIM} 
m_2 \in A^{-1}{\mathbb{M}}_{0,2,\varepsilon,\gamma_0} = \{ A^{-1}m(\lambda,\xi') \mid m(\lambda,\xi') \in \mathbb{M}_{0,2,\varepsilon,\gamma_0} \}, 
\end{align}
then, for $1<q<\infty$, $j=2,3$, $\ell =0,1$ and $1\leq m \leq N$, 
we have
\begin{align}
\mathscr{R}_{\mathcal{L}(L_q(\Omega))}(\{ (\tau{\partial}_\tau)^\ell \partial_m K_j(\lambda) \mid \lambda\in{\Sigma_{\varepsilon,\gamma_0}} \})&\leq C_{N,q,\varepsilon,\gamma_0,\mu,\delta,m_2}. 
\end{align}
\end{Lem}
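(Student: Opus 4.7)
The plan is to reduce each operator family to one covered by Lemma \ref{Lem:RbddV} or Lemma \ref{Lem:RbddVS} by absorbing the extra factor into the symbol. Those lemmas require multipliers of order $0$ (in the appropriate type), while the hypothesis $m_k\in\mathbb{M}_{-2,k,\varepsilon,\gamma_0}$ leaves two orders to spare; it therefore suffices to verify that $\lambda$, $\gamma$, $\lambda^{1/2}\partial_m$, and $\partial_m\partial_n$ each contribute a factor of order $2$ of a compatible type, and then to track the effect of $(\tau\partial_\tau)^\ell$ via the Leibniz rule.

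For $\lambda K_j(\lambda)$ and $\gamma K_j(\lambda)$, the identity $\lambda=\mu(B^2-A^2)$ together with \eqref{esti:multiclassAB} and Lemma \ref{Lem:MultiplierClass} yields $\lambda\in\mathbb{M}_{2,1,\varepsilon,\gamma_0}$, and since $|\gamma|\leq|\lambda|$ with $\gamma$ independent of $\tau$ also $\gamma\in\mathbb{M}_{2,1,\varepsilon,\gamma_0}$. Thus $\lambda m_k$ and $\gamma m_k$ belong to $\mathbb{M}_{0,k,\varepsilon,\gamma_0}$, and feeding them back into Lemma \ref{Lem:RbddV} or Lemma \ref{Lem:RbddVS} produces the first two bounds. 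For the derivative cases, a tangential $\partial_m$ ($1\leq m\leq N-1$) corresponds to multiplication by $i\xi_m\in\mathbb{M}_{1,2,\varepsilon,\gamma_0}$, while the normal derivative acts only on the factor $k_{i_1}(d_{\ell_1}(x_N))$ and a direct computation (using $B\partial_{x_N}\mathcal{M}(x_N)=-Bk_1(x_N)-Ak_3(x_N)$) yields
\begin{align}
\partial_Nk_1(d_\ell(x_N))&=\mp Bk_1(d_\ell(x_N)), \\
\partial_Nk_2(d_\ell(x_N))&=\mp Ak_2(d_\ell(x_N)), \\
\partial_Nk_3(d_\ell(x_N))&=\mp\bigl[Bk_1(d_\ell(x_N))+Ak_3(d_\ell(x_N))\bigr].
\end{align}
Since $A\in\mathbb{M}_{1,2,\varepsilon,\gamma_0}$, $B\in\mathbb{M}_{1,1,\varepsilon,\gamma_0}$ and $\lambda^{1/2}\in\mathbb{M}_{1,1,\varepsilon,\gamma_0}$, each of $\lambda^{1/2}\partial_m$ and $\partial_m\partial_n$ contributes exactly two orders, distributed across a finite sum of operators of the same form as $K_1$, $K_2$ or $K_3$ but with possibly different choices of $(i_1,\ell_1)$. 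Applying Lemmas \ref{Lem:RbddV}--\ref{Lem:RbddVS} summand by summand completes the argument; the $(\tau\partial_\tau)^\ell$ factors are absorbed by the Leibniz rule, using $\tau\partial_\tau\lambda=i\tau$ with $|\tau|\leq|\lambda|$, and the fact that $(\tau\partial_\tau)^{\ell'}m_k$ and $(\tau\partial_\tau)^{\ell'}B$ remain within the same multiplier classes by Lemma \ref{Lem:MultiplierClass}.

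For the final assertion, where only a single derivative is available but the multiplier carries a factor $A^{-1}$ and $k_{i_1}(d_{\ell_1}(x_N))=e^{-Ad_{\ell_1}(x_N)}$, the same mechanism applies: a tangential $\partial_m$ brings down $i\xi_m$ which, combined with $A^{-1}$, lies in $\mathbb{M}_{0,2,\varepsilon,\gamma_0}$, while $\partial_N$ brings down $\mp A$ which exactly cancels $A^{-1}$. I expect the main obstacle to be the bookkeeping for $i_1=3$: since $\partial_Nk_3$ splits into a $k_1$-piece with a multiplier of type $1$ and a $k_3$-piece with a multiplier of type $2$, the resulting decomposition requires simultaneous use of both Lemma \ref{Lem:RbddV} and Lemma \ref{Lem:RbddVS} on separate summands, and one must verify that in every branch of the case distinction the resulting multiplier still falls into $\mathbb{M}_{0,k',\varepsilon,\gamma_0}$ for an appropriate $k'$. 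Once this verification is carried out systematically, the $\mathscr{R}$-bounds follow directly from the preceding two lemmas.
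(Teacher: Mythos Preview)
Your proposal is correct and follows essentially the same approach the paper intends: the paper gives no detailed proof here but simply refers to the argument of \cite[Lemma 5.4]{Sai15MMAS}, which is precisely the symbol-absorption strategy you describe (feed $\lambda$, $\gamma$, $\lambda^{1/2}\partial_m$, $\partial_m\partial_n$ into the multiplier, use $\partial_Nk_{i_1}$ formulas to stay within the $K_1/K_2/K_3$ templates, then invoke Lemmas \ref{Lem:RbddV}--\ref{Lem:RbddVS}). Your handling of the $i_1=3$ split and of the $A^{-1}\mathbb{M}_{0,2,\varepsilon,\gamma_0}$ case with $i_1=2$ is exactly what is needed; the only minor imprecision is that in the split of $\partial_Nk_3$ the resulting $k_1$-piece carries the type-$2$ multiplier $Bm_2$ (not type $1$), so both pieces are handled by the $K_2$/$K_3$ templates rather than by $K_1$.
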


Now, we state the lemma which plays a crucial role in showing Theorem \ref{Thm:Rbdddh} 
from the estimate of the symbol. 

\begin{Lem} \label{Lem:Rbdd}  
Let $0<\varepsilon<\pi/2$ and $\gamma_0>0$. 
Recall that $\mathbb{M}_{s,k,\varepsilon,\gamma_0}$, $A^{-1}{\mathbb{M}}_{0,2,\varepsilon,\gamma_0}$, A, $d_\ell(x_N)$, $k_i(x_N)$ and $\Sigma_{\varepsilon,\gamma_0}$ 
are given by \eqref{eq:DefbbM}, \eqref{eq:DefAIM}, \eqref{eq:DefAB}, \eqref{eq:Defdl}, \eqref{eq:Defki} and \eqref{eq:resoset}, respectively, 
and write $\lambda=\gamma+i\tau$ for $\lambda\in{\Sigma_{\varepsilon,\gamma_0}}$. 
Assume that $m_k \in \mathbb{M}_{-2,k,\varepsilon,\gamma_0}$ ($k=1,2$). 
For all $j=1,2,3$, $\lambda\in{\Sigma_{\varepsilon,\gamma_0}}$, $i_1=1,2,3$ and $\ell_1,\ell_2=1,2$, 
there exist operators 
\begin{align}
\mathcal{K}_1(\lambda)=\mathcal{K}^1_{1,\ell_1,\ell_2}(\lambda,m_1), \quad \mathcal{K}_2(\lambda)=\mathcal{K}^{i_1}_{2,\ell_1,\ell_2}(\lambda,m_2), \quad \mathcal{K}_3(\lambda)=\mathcal{K}^{i_1}_{3,\ell_1,\ell_2}(\lambda,m_2) 
\end{align}
(precisely, they are given by \eqref{eq:defL1}, \eqref{eq:defL2} and \eqref{eq:defL3}, respectively)  
satisfying $\mathcal{K}_j(\lambda) \in \mathcal{L}(L_q(\Omega)^{1+N},W^2_q(\Omega))$ for $1<q<\infty$ 
such that the following two assertions hold:  
\begin{itemize}
\item[a)] Given $h \in C_0^\infty(\mathbb{R}^N)$, we have the formula 
\begin{align} \label{eq:Lemopform1}
[\mathcal{K}_1(\lambda) (\lambda^{1/2}h,\nabla h)](x) = \mathscr{F}_{\xi'}^{-1}\left[ m_1(\lambda,\xi') B k_1(d_{\ell_1}(x_N)) \widehat{h}(\xi',d_{\ell_2}(0)) \right](x'), \\ \label{eq:Lemopform}
\begin{aligned} 
[\mathcal{K}_2(\lambda) (h,\nabla h)](x) &= \mathscr{F}_{\xi'}^{-1}\left[ m_2(\lambda,\xi') A k_{i_1}(d_{\ell_1}(x_N)) \widehat{h}(\xi',d_{\ell_2}(0)) \right](x'), \\
[\mathcal{K}_3(\lambda) (h,\nabla h)](x) &= \mathscr{F}_{\xi'}^{-1}\left[ m_2(\lambda,\xi') k_{i_1}(d_{\ell_1}(x_N)) \widehat{h}(\xi',d_{\ell_2}(0)) \right](x'). 
\end{aligned}
\end{align}
\item[b)]
For any $1<q<\infty$, $j=1,2,3$, $\ell =0,1$ and $1\leq m,n \leq N$, there hold 
\begin{align} \label{eq:RbddLjU}
\mathscr{R}_{\mathcal{L}(L_q(\Omega))}(\{ (\tau{\partial}_\tau)^\ell \lambda \mathcal{K}_j(\lambda) \mid \lambda\in{\Sigma_{\varepsilon,\gamma_0}} \})&\leq C_{N,q,\varepsilon,\gamma_0,\mu,\delta,m_k}, \\ 
\mathscr{R}_{\mathcal{L}(L_q(\Omega))}(\{ (\tau{\partial}_\tau)^\ell \gamma \mathcal{K}_j(\lambda) \mid \lambda\in{\Sigma_{\varepsilon,\gamma_0}} \})&\leq C_{N,q,\varepsilon,\gamma_0,\mu,\delta,m_k}, \\ 
\mathscr{R}_{\mathcal{L}(L_q(\Omega))}(\{ (\tau{\partial}_\tau)^\ell \lambda^{1/2} \partial_m \mathcal{K}_j(\lambda) \mid \lambda\in{\Sigma_{\varepsilon,\gamma_0}} \})&\leq C_{N,q,\varepsilon,\gamma_0,\mu,\delta,m_k}, \\ 
\mathscr{R}_{\mathcal{L}(L_q(\Omega))}(\{ (\tau{\partial}_\tau)^\ell \partial_m{\partial}_n \mathcal{K}_j(\lambda) \mid \lambda\in{\Sigma_{\varepsilon,\gamma_0}} \})&\leq C_{N,q,\varepsilon,\gamma_0,\mu,\delta,m_k}, 
\end{align}
where $k=1$ when $j=1$ and $k=2$ when $j=2,3$. 
\end{itemize}

Furthermore, if $m_2 \in A^{-1}{\mathbb{M}}_{0,2,\varepsilon,\gamma_0}$ instead of $m_2 \in \mathbb{M}_{-2,2,\varepsilon,\gamma_0}$ and if $i_1=2$, 
then, the operators $\mathcal{K}_2(\lambda)=\mathcal{K}^2_{2,\ell_1,\ell_2}(\lambda,m_2)$ and $\mathcal{K}_3(\lambda)=\mathcal{K}^2_{3,\ell_1,\ell_2}(\lambda,m_2)$ satisfy the following assertions: 
\begin{itemize}
\item[a)] 
Given $h \in C_0^\infty(\mathbb{R}^N)$, we have the formula \eqref{eq:Lemopform}. 
\item[b)] 
For any $1<q<\infty$, $j=2,3$, $\ell =0,1$ and $1\leq m \leq N$, there holds 
\begin{align} \label{eq:RbddLjP}
\mathscr{R}_{\mathcal{L}(L_q(\Omega))}(\{ (\tau{\partial}_\tau)^\ell \partial_m \mathcal{K}_j(\lambda) \mid \lambda\in{\Sigma_{\varepsilon,\gamma_0}} \})&\leq C_{N,q,\varepsilon,\gamma_0,\delta,m_2}. 
\end{align}
\end{itemize}
\end{Lem}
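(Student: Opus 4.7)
The plan is to construct $\mathcal{K}_1,\mathcal{K}_2,\mathcal{K}_3$ by rewriting the boundary trace $\widehat{h}(\xi',d_{\ell_2}(0))$ via the fundamental theorem of calculus, and then to read off the $\mathscr{R}$-boundedness from Lemmas \ref{Lem:RbddV}, \ref{Lem:RbddVS}, and \ref{Lem:DRbddV}. Concretely, one chooses a cut-off $\Phi\in\{\varphi_0,\varphi_\delta\}$ so that $\Phi(d_{\ell_2}(0))=1$ while $\Phi$ vanishes at the opposite endpoint, together with an exponential weight $k_j(d_{\ell_2}(y_N))$ (with $j\in\{1,2\}$) whose value at $y_N=d_{\ell_2}(0)$ is $1$. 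Then
\begin{align*}
\widehat{h}(\xi',d_{\ell_2}(0)) = \mp\int_0^\delta \partial_{y_N}\bigl[\Phi(y_N)\,k_j(d_{\ell_2}(y_N))\,\widehat{h}(\xi',y_N)\bigr]\,dy_N,
\end{align*}
and the Leibniz rule splits the integrand into three pieces depending on whether $\partial_{y_N}$ hits the cut-off, the exponential weight, or $\widehat{h}$.

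After multiplying by the outer factor in \eqref{eq:Lemopform1}--\eqref{eq:Lemopform} and applying $\mathscr{F}_{\xi'}^{-1}$, each of these pieces becomes an integral of $K_1$-, $K_2$-, or $K_3$-type from Lemmas \ref{Lem:RbddV}--\ref{Lem:RbddVS}. The two groups in which $\partial_{y_N}$ strikes $\Phi$ or $\widehat{h}$ carry a multiplier in $\mathbb{M}_{-2,k}$ and act on $h$ or $\partial_Nh$ (entries of the input), so Lemma \ref{Lem:DRbddV} immediately delivers the estimates in \eqref{eq:RbddLjU}.

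The main obstacle is the remaining piece, in which $\partial_{y_N}$ differentiates the exponential weight. For $\mathcal{K}_1$ this produces an extra $B$ factor, giving the integrand $m_1B^2k_1(d_{\ell_1}(x_N))\,k_1(d_{\ell_2}(y_N))\,\widehat{h}$, whose effective order is too high for a direct application of Lemma \ref{Lem:DRbddV}. The resolution is the algebraic identity $B^2=A^2+\mu^{-1}\lambda$ combined with $A^2\widehat{h}(\xi',y_N)=\sum_{j=1}^{N-1}(-i\xi_j)\widehat{\partial_jh}(\xi',y_N)$ and $\lambda\widehat{h}=\lambda^{1/2}\widehat{\lambda^{1/2}h}$. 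The $A^2$-part is recast as a $K_2$-operator with multiplier $m_1(-i\xi_j)/A\in\mathbb{M}_{-2,2}$, since $(-i\xi_j)/A\in\mathbb{M}_{0,2}$ by homogeneity in $\xi'$, applied to $\partial_jh$. The $\lambda$-part, after inserting $B/B$, becomes a $K_1$-operator with multiplier $\mu^{-1}\lambda^{1/2}m_1/B\in\mathbb{M}_{-2,1}$ (using $\lambda^{1/2}/B\in\mathbb{M}_{0,1}$ from \eqref{esti:equivB}), applied to $\lambda^{1/2}h$. Both fit the framework of Lemma \ref{Lem:DRbddV}.

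The cases $\mathcal{K}_2$ and $\mathcal{K}_3$ are analogous and simpler, since the outer factor ($A$ or absent) together with the extra $A$ or $B$ produced by differentiating the weight gives a lower-order expression. For the additional case $m_2\in A^{-1}\mathbb{M}_{0,2}$ with $i_1=2$, the outer exponential $k_2(d_{\ell_1}(x_N))=e^{-Ad_{\ell_1}(x_N)}$ satisfies $\partial_{x_N}k_2=\mp Ak_2$, so the $A^{-1}$ singularity of $m_2$ is absorbed by an $A$ arising from integration by parts in $x_N$, yielding the milder $\mathscr{R}$-bound for $\partial_m\mathcal{K}_j$ in \eqref{eq:RbddLjP}. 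Assembling all terms using Lemma \ref{Lem:Rbddprop} on sums and compositions of $\mathscr{R}$-bounded families completes the proof.
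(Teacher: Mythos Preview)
Your approach is essentially the paper's: the Volevich trick rewrites the boundary trace as an integral over $(0,\delta)$ against a cut-off times an exponential weight, the Leibniz rule produces three pieces, and the hard piece (derivative hits the exponential) is handled via $B^2=\mu^{-1}\lambda+A^2$ before feeding into Lemma~\ref{Lem:DRbddV}. The only cosmetic difference is that the paper writes $i\xi_{j'}/B$ rather than your $i\xi_j/A$ for the $A^2$-portion, keeping all terms in $K_1$-form; both variants work.

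Two small slips to correct. First, for $\mathcal{K}_1$ the input is $(\lambda^{1/2}h,\nabla h)$, so $h$ itself is \emph{not} an entry: the piece where $\partial_{y_N}$ hits $\Phi$ acts on $\widehat{h}$, and you must insert $\lambda^{-1/2}\cdot\lambda^{1/2}$ (noting $\gamma_0^{1/2}\lambda^{-1/2}\in\mathbb{M}_{0,1,\varepsilon,\gamma_0}$ since $|\lambda|>\gamma_0>0$) to route it through $\lambda^{1/2}h$; the paper does exactly this. Second, there is no ``integration by parts in $x_N$'' in the $A^{-1}\mathbb{M}_{0,2}$ case: the point is simply that $\partial_m$ produces either an $i\xi_m$ (for $m\le N-1$) or a factor $\pm A$ from differentiating $k_2(d_{\ell_1}(x_N))=e^{-Ad_{\ell_1}(x_N)}$ (for $m=N$), and either cancels the $A^{-1}$ in $m_2$. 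This is precisely the content of the second half of Lemma~\ref{Lem:DRbddV}, which you should invoke directly rather than re-argue.
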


\begin{Rem}
It might not look natural to deduce the $\mathscr{R}$-boundedness for \eqref{eq:Lemopform} in place of 
\begin{align}
[\mathcal{K}_2(\lambda) (\lambda^{1/2}h,\nabla h)](x) 
&= \mathscr{F}_{\xi'}^{-1}\left[ m_2(\lambda,\xi') A k_{i_1}(d_{\ell_1}(x_N)) \widehat{h}(\xi',d_{\ell_2}(0)) \right](x'), \\
[\mathcal{K}_3(\lambda) (\lambda^{1/2}h,\nabla h)](x) 
&= \mathscr{F}_{\xi'}^{-1}\left[ m_2(\lambda,\xi') k_{i_1}(d_{\ell_1}(x_N)) \widehat{h}(\xi',d_{\ell_2}(0)) \right](x').  
\end{align}
However, the $\mathscr{R}$-boundedness for \eqref{eq:Lemopform} is needed 
in order to estimate the pressure term in showing Theorem \ref{Thm:Rbdd} from Theorem \ref{Thm:Rbdddh}, 
see Remark \ref{Rem:Rbdddh}. 
On the other hand, there is no need to prove the $\mathscr{R}$-boundedness for 
\begin{align}
[\mathcal{K}_1(\lambda) (h,\nabla h)](x) = \mathscr{F}_{\xi'}^{-1}\left[ m_1(\lambda,\xi') B k_1(d_{\ell_1}(x_N)) \widehat{h}(\xi',d_{\ell_2}(0)) \right](x') 
\end{align}
since we use the operator $\mathcal{K}_1(\lambda)$ only in Lemma \ref{Lem:RbddHarmla} below 
to estimate the solution $u_j$ of \eqref{eq:PDEuj}. 
\end{Rem}


\begin{proof}[Proof of Lemma \ref{Lem:Rbdd}]
This lemma is proved by a trick due to Volevich \cite{Vol65MS} and Lemma \ref{Lem:DRbddV}. 
We write $\varphi_{d_1(0)}=\varphi_\delta$ and $\varphi_{d_2(0)}=\varphi_0$, see \eqref{eq:Defdl}. 
For $h \in C_0^\infty(\mathbb{R}^N)$, we rewrite the right-hand side of \eqref{eq:Lemopform1} as 
\begin{align} \label{eq:deriveL1}
&\mathscr{F}_{\xi'}^{-1}\left[ m_1(\lambda,\xi') B k_1(d_{\ell_1}(x_N)) \widehat{h}(\xi',d_{\ell_2}(0)) \right]{}\\ \notag 
&= -\int_0^\delta \partial_{y_N} \mathscr{F}_{\xi'}^{-1}\left[ \varphi_{d_{\ell_2}(0)}(y_N) m_1(\lambda,\xi') B k_1(d_{\ell_1}(x_N)) e^{-B{d_{\ell_2}(y_N)}}{\widehat{h}(\xi',y_N)} \right]{}\, dy_N \\ \notag
&= -\int_0^\delta \mathscr{F}_{\xi'}^{-1}\left[ \varphi_{d_{\ell_2}(0)}'(y_N)  
m_1(\lambda,\xi') B k_1(d_{\ell_1}(x_N)) e^{-B{d_{\ell_2}(y_N)}} \widehat{h}(\xi',y_N) \right]{}\, dy_N \\ \notag
&\quad{+(-1)^{\ell_2}}\int_0^\delta \mathscr{F}_{\xi'}^{-1}\left[ \varphi_{d_{\ell_2}(0)}(y_N) 
m_1(\lambda,\xi') B^2 k_1(d_{\ell_1}(x_N)) e^{-B{d_{\ell_2}(y_N)}} \widehat{h}(\xi',y_N) \right]{}\, dy_N \\ \notag
&\quad-\int_0^\delta \mathscr{F}_{\xi'}^{-1}\left[ \varphi_{d_{\ell_2}(0)}(y_N) 
m_1(\lambda,\xi') B k_1(d_{\ell_1}(x_N)) e^{-B{d_{\ell_2}(y_N)}} \widehat{\partial_Nh}(\xi',y_N) \right]{}\, dy_N \\ \notag
&= -\frac{1}{\gamma_0^{1/2}}\int_0^\delta \mathscr{F}_{\xi'}^{-1}\left[ \varphi_{d_{\ell_2}(0)}'(y_N) \frac{\gamma_0^{1/2}}{\lambda^{1/2}} 
m_1(\lambda,\xi') B k_1(d_{\ell_1}(x_N)) e^{-B{d_{\ell_2}(y_N)}} \widehat{\lambda^{1/2}h}(\xi',y_N) \right]{}\, dy_N \\ \notag
&\quad{+(-1)^{\ell_2}}\int_0^\delta \mathscr{F}_{\xi'}^{-1}\left[ \varphi_{d_{\ell_2}(0)}(y_N) 
\frac{\lambda^{1/2}}{\mu B} m_1(\lambda,\xi') B k_1(d_{\ell_1}(x_N)) e^{-B{d_{\ell_2}(y_N)}} \widehat{\lambda^{1/2}h}(\xi',y_N) \right]{}\, dy_N \\ \notag
&\quad{-(-1)^{\ell_2}}\sum_{j'=1}^{N-1} \int_0^\delta \mathscr{F}_{\xi'}^{-1}\left[ \varphi_{d_{\ell_2}(0)}(y_N) 
\frac{i\xi_{j'}}{B} m_1(\lambda,\xi') B k_1(d_{\ell_1}(x_N)) e^{-B{d_{\ell_2}(y_N)}} \widehat{\partial_{j'}h}(\xi',y_N) \right]{}\, dy_N \\ \notag 
&\quad-\int_0^\delta \mathscr{F}_{\xi'}^{-1}\left[ \varphi_{d_{\ell_2}(0)}(y_N) 
m_1(\lambda,\xi') B k_1(d_{\ell_1}(x_N)) e^{-B{d_{\ell_2}(y_N)}} \widehat{\partial_Nh}(\xi',y_N) \right]{}\, dy_N, 
\end{align}
where we have used $B=B^2/B=\lambda/(\mu B)+\sum_{j'=1}^{N-1}(-i\xi_{j'}/B)i\xi_{j'}$. 
We thus define $\mathcal{K}_1(\lambda)$ by 
\begin{align} \label{eq:defL1}
\mathcal{K}_1(\lambda)(\lambda^{1/2}h,\nabla h) = (\text{the right-hand side of \eqref{eq:deriveL1}}). 
\end{align}
Similarly, by using the relation $A=A^2/A=\sum_{j'=1}^{N-1} (-i\xi_{j'}/A)i\xi_{j'}$, 
we rewrite the right-hand sides of \eqref{eq:Lemopform} as  
\begin{align} \label{eq:deriveL2}
&\mathscr{F}_{\xi'}^{-1}\left[ m_2(\lambda,\xi') A k_{i_1}(d_{\ell_1}(x_N)) \widehat{h}(\xi',d_{\ell_2}(0)) \right]{}\\ \notag 
&= -\int_0^\delta \partial_{y_N} \mathscr{F}_{\xi'}^{-1}\left[ \varphi_{d_{\ell_2}(0)}(y_N) m_2(\lambda,\xi') A k_{i_1}(d_{\ell_1}(x_N)) e^{-A{d_{\ell_2}(y_N)}}{\widehat{h}(\xi',y_N)} \right]{}\, dy_N \\ \notag
&= -\int_0^\delta \mathscr{F}_{\xi'}^{-1}\left[ \varphi_{d_{\ell_2}(0)}'(y_N)  
m_2(\lambda,\xi') A k_{i_1}(d_{\ell_1}(x_N)) e^{-A{d_{\ell_2}(y_N)}} \widehat{h}(\xi',y_N) \right]{}\, dy_N \\ \notag 
&\quad{-(-1)^{\ell_2}}\sum_{j'=1}^{N-1} \int_0^\delta \mathscr{F}_{\xi'}^{-1}\left[ \varphi_{d_{\ell_2}(0)}(y_N) 
\frac{i\xi_{j'}}{A} m_2(\lambda,\xi') A k_{i_1}(d_{\ell_1}(x_N)) e^{-A{d_{\ell_2}(y_N)}} \widehat{\partial_{j'}h}(\xi',y_N) \right]{}\, dy_N \\ \notag
&\quad-\int_0^\delta \mathscr{F}_{\xi'}^{-1}\left[ \varphi_{d_{\ell_2}(0)}(y_N) 
m_2(\lambda,\xi') A k_{i_1}(d_{\ell_1}(x_N)) e^{-A{d_{\ell_2}(y_N)}} \widehat{\partial_Nh}(\xi',y_N) \right]{}\, dy_N, 
\\ \label{eq:deriveL3}
&\mathscr{F}_{\xi'}^{-1}\left[ m_2(\lambda,\xi') k_{i_1}(d_{\ell_1}(x_N)) \widehat{h}(\xi',d_{\ell_2}(0)) \right]{}\\ \notag 
&= -\int_0^\delta \partial_{y_N} \mathscr{F}_{\xi'}^{-1}\left[ \varphi_{d_{\ell_2}(0)}(y_N) m_2(\lambda,\xi') k_{i_1}(d_{\ell_1}(x_N)) e^{-A{d_{\ell_2}(y_N)}}{\widehat{h}(\xi',y_N)} \right]{}\, dy_N \\ \notag
&= -\int_0^\delta \mathscr{F}_{\xi'}^{-1}\left[ \varphi_{d_{\ell_2}(0)}'(y_N)  
m_2(\lambda,\xi') k_{i_1}(d_{\ell_1}(x_N)) e^{-A{d_{\ell_2}(y_N)}} \widehat{h}(\xi',y_N) \right]{}\, dy_N \\ \notag
&\quad{-(-1)^{\ell_2}}\sum_{j'=1}^{N-1} \int_0^\delta \mathscr{F}_{\xi'}^{-1}\left[ \varphi_{d_{\ell_2}(0)}(y_N) 
\frac{i\xi_{j'}}{A} m_2(\lambda,\xi') k_{i_1}(d_{\ell_1}(x_N)) e^{-A{d_{\ell_2}(y_N)}} \widehat{\partial_{j'}h}(\xi',y_N) \right]{}\, dy_N \\ \notag
&\quad-\int_0^\delta \mathscr{F}_{\xi'}^{-1}\left[ \varphi_{d_{\ell_2}(0)}(y_N) 
m_2(\lambda,\xi') k_{i_1}(d_{\ell_1}(x_N)) e^{-A{d_{\ell_2}(y_N)}} \widehat{\partial_Nh}(\xi',y_N) \right]{}\, dy_N.  
\end{align}
Then we define 
\begin{align} \label{eq:defL2}
\mathcal{K}_2(\lambda)(h,\nabla h) = (\text{the right-hand side of \eqref{eq:deriveL2}}), \\ \label{eq:defL3} 
\mathcal{K}_3(\lambda)(h,\nabla h) = (\text{the right-hand side of \eqref{eq:deriveL3}}). 
\end{align}

If $m_1$ belongs to $\mathbb{M}_{-2,1,\varepsilon,\gamma_0}$, 
\begin{align}
\frac{\gamma_0^{1/2}}{\lambda^{1/2}} m_1(\lambda,\xi'), \ 
\frac{\lambda^{1/2}}{\mu B} m_1(\lambda,\xi'), \ 
\frac{i\xi_{j'}}{B} m_1(\lambda,\xi')
\end{align}
also belong to $\mathbb{M}_{-2,1,\varepsilon,\gamma_0}$ by \eqref{esti:multiclassAB} and Lemma \ref{Lem:MultiplierClass}. 
Similarly, if $m_2$ is in $\mathbb{M}_{-2,2,\varepsilon,\gamma_0}$ or $A^{-1}{\mathbb{M}_{0,2,\varepsilon,\gamma_0}}$, 
\begin{align}
\frac{i\xi_{j'}}{A} m_2(\lambda,\xi')
\end{align}
is also in $\mathbb{M}_{-2,2,\varepsilon,\gamma_0}$ or $A^{-1}{\mathbb{M}_{0,2,\varepsilon,\gamma_0}}$, respectively. 
Thus, the assertion for $K_j(\lambda)$ ($j=1,2,3$) in Lemma \ref{Lem:DRbddV} together with Lemma \ref{Lem:Rbddprop} ii) 
implies the conclusion for $\mathcal{K}_j(\lambda)$. 
\end{proof}

\section{Proof of Theorem \ref{Thm:Rbdddh}} \label{Sec:Pf}

In this section, 
we first prove the $\mathscr{R}$-boundedness of $\mathcal{S}_N(\lambda)$ and $\mathcal{T}(\lambda)$ 
by deduction of estimates of symbols in the solution formula \eqref{eq:solform}. 
We then discuss the other solution operators $\mathcal{S}_j(\lambda)$ ($j=1,\cdots,N-1$) 
by analyzing the Laplace resolvent problem \eqref{eq:PDEuj}.


We begin with analysis of $\det{\fb{L}}$ given by \eqref{eq:detL}. 

\begin{Lem} \label{Lem:EstidetL} 
Let $0<\varepsilon<\pi/2$ and $\gamma_0>0$, and let $\alpha' \in \mathbb{N}_0^{N-1}$ be any multi-index. 
There exists a constant $C_{N,\varepsilon,\gamma_0,\alpha'}>0$ such that 
for any $(\lambda,\xi') \in \Sigma_{\varepsilon,\gamma_0}\times (\mathbb{R}^{N-1}\setminus\{0\})$ and $\ell=0,1$, 
the following estimate holds: 
\begin{align} \label{esti:InvdetL}
\left| (\tau{\partial}_\tau)^\ell{\partial_{\xi'}^{\alpha'}}\frac{1}{\det{\fb{L}}} \right| 
\leq C_{N,\varepsilon,\gamma_0,\alpha'} (|\lambda|^{1/2}+A)^{-6} \left(1+\frac{1}{A}\right)A^{-|\alpha'|}. 
\end{align}
Here, $\Sigma_{\varepsilon,\gamma_0}$ and $A$ are given by 
\eqref{eq:resoset} and \eqref{eq:DefAB}, respectively, and $\lambda=\gamma+i\tau$. 
\end{Lem}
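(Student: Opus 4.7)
The plan is to establish the lower bound $|\det{\fb{L}}|\geq c\,A(|\lambda|^{1/2}+A)^6/(1+A)$ uniformly on $\Sigma_{\varepsilon,\gamma_0}\times(\mathbb{R}^{N-1}\setminus\{0\})$, to derive matching Leibniz-rule upper bounds on $(\tau\partial_\tau)^\ell\partial_{\xi'}^{\alpha'}\det{\fb{L}}$, and to combine the two via the Fa\`a di Bruno formula for derivatives of a reciprocal. From \eqref{eq:detL} I would write $\det{\fb{L}} = (B-A)^{-2}P_+ P_-$ with
\begin{align*}
P_\pm := (B^2+A^2)^2(1\pm a)(1\mp b) - 4A^3 B(1\mp a)(1\pm b),
\end{align*}
and treat the three factors separately. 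A key algebraic identity, which follows from $(B^2+A^2)^2 - (2AB)^2 = (B^2-A^2)^2$, $B^2-A^2=\mu^{-1}\lambda$, and a routine expansion, is
\begin{align*}
(B^2+A^2)^2 - 4A^3 B = (B-A)\,D_3,
\end{align*}
where $D_3$ is as in \eqref{eq:defDis}; this isolates the leading behavior of $P_\pm$ when $a,b$ are small.

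The degeneracy as $\xi'\to 0$ lies entirely in $P_-$: at $A=0$ one has $a=1$ so $P_-|_{A=0}=0$, whereas $P_+|_{A=0} = 2B^4(1-b)$ is non-degenerate since $\gamma_0>0$ yields $\text{Re}\,B\geq c_{\varepsilon,\mu}\gamma_0^{1/2}$ by \eqref{esti:equivB}, hence $|1-b|\geq 1 - e^{-c_{\varepsilon,\mu}\gamma_0^{1/2}\delta}>0$. This is the precise point where the hypothesis $\gamma_0>0$ cannot be relaxed, matching Remark~\ref{Rem:highsingular}. Writing $1-a = A\delta\int_0^1 e^{-tA\delta}\,dt$ extracts a clean factor of $A$ from $P_-$, after which I would prove the lower bound by a case analysis on the size of $A$. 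For $A$ bounded by a constant depending on $(\varepsilon,\gamma_0,\mu,\delta)$, perturbation from the $A=0$ values gives $|P_+|\geq c|\lambda|^2$, $|P_-|\geq c A|\lambda|^2$, and $|(B-A)^{-2}|\geq c|\lambda|^{-1}$ (via $B-A=\mu^{-1}\lambda/(B+A)$), yielding $|\det{\fb{L}}|\geq cA|\lambda|^3$. For $A$ large, the exponential smallness of $a,b$ reduces $P_\pm$ to $(B-A)D_3$ plus a remainder dominated by the main term, and combining $|B-A|\sim|\lambda|/(|\lambda|^{1/2}+A)$ with $|D_3|\geq c(|\lambda|^{1/2}+A)^3$ from \eqref{esti:equivB} yields $|\det{\fb{L}}|\geq cA^6$. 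The two regimes together match $A(|\lambda|^{1/2}+A)^6/(1+A)$.

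Matching upper bounds of the form $|(\tau\partial_\tau)^\ell\partial_{\xi'}^{\alpha'}\det{\fb{L}}|\leq C_{\alpha'}(1+A)(|\lambda|^{1/2}+A)^6 A^{-|\alpha'|}$ follow by applying Leibniz's rule directly to \eqref{eq:detL} together with \eqref{eq:DefabD0}, \eqref{eq:DefccM}, and the multiplier estimates \eqref{esti:multiclassAB}, \eqref{esti:multiclasski}: $A^s\in\mathbb{M}_{s,2,\varepsilon,0}$, $B^s\in\mathbb{M}_{s,1,\varepsilon,0}$, and $a=e^{-A\delta}$, $b=e^{-B\delta}$ are type-2 and type-1 multipliers of order zero respectively whose $\xi'$-derivatives decay exponentially. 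Inserting the lower and upper bounds into
\begin{align*}
\partial_{\xi'}^{\alpha'}\frac{1}{\det{\fb{L}}} = \sum_{k=1}^{|\alpha'|}\sum_{\substack{\beta^{(1)}+\cdots+\beta^{(k)}=\alpha'\\|\beta^{(j)}|\geq 1}}\frac{c_{k,\beta}\prod_{j=1}^k \partial_{\xi'}^{\beta^{(j)}}\det{\fb{L}}}{(\det{\fb{L}})^{k+1}}
\end{align*}
and tracking powers of $A$, $(1+A)$, and $(|\lambda|^{1/2}+A)$ term by term yields \eqref{esti:InvdetL}; the $(\tau\partial_\tau)^\ell$ factor is handled identically since it preserves the relevant multiplier classes.

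The main obstacle I expect is the lower bound on $P_\pm$ in the large-$A$ regime: the two terms $(B^2+A^2)^2$ and $4A^3 B$ are both of order $(|\lambda|^{1/2}+A)^4$ and could in principle cancel, so one genuinely needs the factorization $(B^2+A^2)^2 - 4A^3 B = (B-A)D_3$ together with the sector-dependent lower bounds on $|B-A|$ and $|D_3|$. A secondary delicate point is the sharpness of the factor $A^1$ in the denominator: the identity $(1-a)/A = \delta\int_0^1 e^{-tA\delta}\,dt$ must be used to extract exactly one power of $A$ from $P_-$, and one must verify that the remaining contribution $4A^3 B(1+a)(1-b)$ does not conspire to cancel the leading linear term in the small-$A$ regime.
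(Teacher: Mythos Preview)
Your outline captures the correct architecture---factor $\det\fb{L}=\ell_+\ell_-$, bound each factor below, bound derivatives above, and feed both into Fa\`a di Bruno (the paper packages this last step as Lemma~\ref{Lem:InvMultiClass})---but the lower-bound step has a genuine gap. Your two regimes ``$A$ large'' and ``$A$ bounded by a constant'' do not overlap in a way that makes both perturbative arguments work simultaneously. In the bounded-$A$ regime you treat $P_\pm$ as a perturbation of its $A=0$ value, but the correction is of size $O(A^2|\lambda|+A^3|\lambda|^{1/2})$ against a main term of size $c|\lambda|^2$; when $|\lambda|$ is near $\gamma_0$ and $A$ is of order one this comparison fails, and nothing in your argument rules out $P_\pm=0$ there. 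The paper confronts exactly this: its two perturbative regimes are $A>M$ and $A/|\lambda|^{1/2}<\eta$ (not merely $A$ bounded), and on the remaining \emph{compact} set $\{A\le M,\ A/|\lambda|^{1/2}\ge\eta\}$ it first proves $\det\fb{L}\ne0$ for every $(\lambda,\xi')\in(\mathbb{C}\setminus(-\infty,0])\times(\mathbb{R}^{N-1}\setminus\{0\})$ by a non-perturbative argument---an ODE energy identity when $\mathrm{Im}\,\lambda\ne0$, and an explicit positivity check of the real-variable functions $f_A^\pm(x)$ when $\lambda>0$---and then invokes continuity plus compactness. This non-vanishing step is the missing idea; without it the intermediate region is uncontrolled.

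A second, smaller issue: for Fa\`a di Bruno to give precisely \eqref{esti:InvdetL}, the upper bound on $(\tau\partial_\tau)^\ell\partial_{\xi'}^{\alpha'}\det\fb{L}$ must match the lower bound, i.e.\ be $\le C_{\alpha'}(|\lambda|^{1/2}+A)^6\min\{1,A\}\,A^{-|\alpha'|}$, not the coarser bound you wrote. Otherwise each ratio $|\partial^{\beta}\det\fb{L}|/|\det\fb{L}|$ in the Fa\`a di Bruno sum carries an extra factor $\max\{1,A^{-1}\}$, and the estimate blows up as $A\to0$. The fix is the same extraction $1-a=A\delta\int_0^1 e^{-tA\delta}\,dt$ you already use for the lower bound: the paper rewrites $\ell_-(A,B)=B^3(1-a)(1+b)+A\{\cdots\}$ (see \eqref{eq:EstiDl-}) so that the factor $\min\{1,A\}$ is visible in the derivative estimate as well.
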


\begin{Rem} \label{Rem:gamma0}
We need the condition $\gamma_0>0$ to obtain \eqref{esti:InvdetL} for $A\leq1$, 
see \eqref{eq:Estin-}, \eqref{eq:Estin+} and \eqref{eq:EstiDl-} below. 
In fact, if $\lambda=0$, the singularity is too high at $\xi'=0$ such as 
\begin{align} \label{eq:Q}
|\det{\fb{L}}|^{-1} \sim A^{-10}, \quad \forall \xi'\in \mathbb{R}^{N-1}\setminus\{0\} \text{ with } |\xi'|\leq1
\end{align}
(though the proof is omitted), while
\begin{align*} 
|\det{\fb{L}}|^{-1} \sim (|\lambda|^{1/2}+A)^{-6}A^{-1}, \quad 
\forall (\lambda,\xi')\in{\Sigma_{\varepsilon,\gamma_0}}\times(\mathbb{R}^{N-1}\setminus\{0\}) \text{ with } |\xi'|\leq1
\end{align*}
from \eqref{esti:InvdetL}, \eqref{eq:EstidetL1p} and \eqref{eq:EstidetL1m}. 
Here, $M_1\sim M_2$ means that $c_{\varepsilon,\gamma_0}M_2 \leq M_1 \leq C_{\varepsilon,\gamma_0}M_2$ 
for all $\lambda$ and $\xi'$ with some constants $c_{\varepsilon,\gamma_0},C_{\varepsilon,\gamma_0}>0$ independent of $\lambda$ and $\xi'$. 
\end{Rem}

To prove Lemma \ref{Lem:EstidetL}, we first show the following lemma. 
\begin{Lem} \label{Lem:InvMultiClass}
Let $0<\varepsilon<\pi/2$ and $\gamma_0\geq0$. 
Let $m$ be a function defined on $\Sigma_{\varepsilon,\gamma_0}\times(\mathbb{R}^{N-1}\setminus\{0\})$ 
and $f$ a positive real-valued function defined there. 
Assume that, for any $\ell=0,1$ and multi-index $\beta'\in{\mathbb{N}}_0^{N-1}$, 
there exists $C_{\beta'}>0$ such that 
\begin{align} \label{eq:LemIMC0}
|m(\lambda,\xi')| &\geq f(\lambda,\xi'), \\ \label{eq:LemIMC1}
|(\tau{\partial}_\tau)^\ell \partial_{\xi'}^{\beta'} m(\lambda,\xi')| &\leq C_{\beta'}{f(\lambda,\xi')}{A}^{-|\beta'|} 
\end{align}
for $(\lambda,\xi') \in \Sigma_{\varepsilon,\gamma_0}\times(\mathbb{R}^{N-1}\setminus\{0\})$. 
Then, for any $\ell=0,1$ and multi-index $\alpha'\in{\mathbb{N}}_0^{N-1}$, 
we have the estimate 
\begin{align} \label{eq:LemIMCc}
|(\tau{\partial}_\tau)^\ell \partial_{\xi'}^{\alpha'}{m(\lambda,\xi')}^{-1}| &\leq C_{\alpha'} f(\lambda,\xi')^{-1}{A}^{-|\alpha'|} 
\end{align}
for $(\lambda,\xi') \in \Sigma_{\varepsilon,\gamma_0}\times(\mathbb{R}^{N-1}\setminus\{0\})$. 
This assertion still holds if $A$ is replaced by $(|\lambda|^{1/2}+A)$ in \eqref{eq:LemIMC1} and \eqref{eq:LemIMCc}.  
\end{Lem}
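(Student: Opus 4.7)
The plan is to establish the estimate by induction on $|\alpha'|$, handling the two cases $\ell=0,1$ in parallel. The operators $\tau\partial_\tau$ and $\partial_{\xi_j}$ are mutually commuting derivations because they act on disjoint variables ($\tau\in\mathbb{R}$ and $\xi_j\in\mathbb{R}$), and each obeys the Leibniz rule; hence any monomial $D:=(\tau\partial_\tau)^\ell \partial_{\xi'}^{\alpha'}$ behaves, for our purposes, like a single higher-order derivation on the algebra of smooth functions of $(\tau,\xi')$.

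The base case $|\alpha'|=0$, $\ell=0$ is just $|m|^{-1}\le f^{-1}$ from \eqref{eq:LemIMC0}. For $|\alpha'|=0,\ \ell=1$ the identity $\tau\partial_\tau(m^{-1})=-(\tau\partial_\tau m)\, m^{-2}$ together with \eqref{eq:LemIMC0}--\eqref{eq:LemIMC1} yields $|\tau\partial_\tau(m^{-1})|\le C f\cdot f^{-2}=Cf^{-1}$. For the inductive step I would use the multivariate Faà di Bruno formula applied to $F(w)=1/w$ composed with $m$:
\begin{align*}
D\!\left(\frac{1}{m}\right) \;=\; \sum_{\pi} \frac{(-1)^{|\pi|}\,|\pi|!}{m^{|\pi|+1}} \prod_{B\in\pi} D_B\, m,
\end{align*}
where $\pi$ runs over set partitions of the multiset of elementary derivations constituting $D$, and $D_B$ denotes the composition of those belonging to the block $B$. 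The number of such partitions depends only on $|\alpha'|+\ell$, hence is finite.

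To estimate a single summand, each factor $|D_B m|$ is controlled by \eqref{eq:LemIMC1} by $C f\, A^{-|\alpha'_B|}$, where $\alpha'_B$ is the $\xi'$-multi-index portion of the block $B$ (the presence or absence of the single $\tau\partial_\tau$ in $B$ affects only the constant, not the $A$-weight, since \eqref{eq:LemIMC1} is uniform in $\ell\in\{0,1\}$). Multiplying over $B\in\pi$ and using $\sum_{B\in\pi}|\alpha'_B|=|\alpha'|$ gives $\prod_{B\in\pi}|D_B m| \le C f^{|\pi|} A^{-|\alpha'|}$, while \eqref{eq:LemIMC0} yields $|m|^{-(|\pi|+1)}\le f^{-(|\pi|+1)}$. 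The summand is therefore dominated by $C f^{-1} A^{-|\alpha'|}$, and the same bound survives after summing the finitely many $\pi$, which proves \eqref{eq:LemIMCc}. The $(|\lambda|^{1/2}+A)$-variant follows by a verbatim repetition, replacing every $A^{-|\beta|}$ weight by $(|\lambda|^{1/2}+A)^{-|\beta|}$, which is again submultiplicative across the factors of the product.

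The only point requiring attention is the bookkeeping for the block that contains $\tau\partial_\tau$ (if any): since \eqref{eq:LemIMC1} provides the estimate $|D_B m|\le C f\,A^{-|\alpha'_B|}$ with no extra $A^{-1}$ penalty for the $\tau\partial_\tau$-factor, the combinatorial count matches exactly the target \eqref{eq:LemIMCc}, which likewise has no $A^{-\ell}$ on the right-hand side. No step in the argument uses $\gamma_0>0$, consistent with the hypothesis $\gamma_0\ge 0$. The main obstacle I anticipate is purely notational --- writing out the Faà di Bruno partitions cleanly in multi-index form --- but the underlying analytic content is a one-line combinatorial bound.
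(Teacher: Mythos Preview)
Your proposal is correct and follows essentially the same route as the paper: apply the Fa\`a di Bruno formula to $1/m$ and bound each summand using \eqref{eq:LemIMC0}--\eqref{eq:LemIMC1}, obtaining $f^{|\pi|}/f^{|\pi|+1}\cdot A^{-|\alpha'|}=f^{-1}A^{-|\alpha'|}$ termwise. The paper states the formula in the multi-index decomposition form $\alpha'_1+\cdots+\alpha'_\ell=\alpha'$ and treats only $\ell=0$ explicitly (dismissing $\ell=1$ and the $(|\lambda|^{1/2}+A)$-variant as ``similar''), whereas you phrase it via set partitions and handle $\ell=1$ by regarding $\tau\partial_\tau$ as one more commuting derivation in the partition---a small expository difference, not a different argument. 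One cosmetic remark: your opening sentence promises ``induction on $|\alpha'|$'' but the body is a direct Fa\`a di Bruno computation, not an inductive one; you may want to drop that phrase.
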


\begin{proof}
We give the proof of \eqref{eq:LemIMCc} for $\ell=0$, 
since we can show the case $\ell=1$ and the case where $A$ is replaced by $(|\lambda|^{1/2}+A)$ similarly. 
By the Fa\`a di Bruno's formula (cf. \cite[Lemma 2.3]{BahCheDan11GMW}), 
for any multi-index $\alpha'\in{\mathbb{N}}_0^{N-1}$ with $|\alpha'|\geq1$ 
and $(\lambda,\xi')\in{\Sigma_{\varepsilon,\gamma_0}}\times(\mathbb{R}^{N-1}\setminus\{0\})$, 
\begin{align*}
\left|\partial_{\xi'}^{\alpha'}\frac{1}{m(\lambda,\xi')}\right| 
&= \Bigg|\sum_{\ell=1}^{|\alpha'|} \frac{(-1)^\ell \ell!}{m(\lambda,\xi')^{\ell+1}} 
\sum_{\substack{\alpha_1'+\cdots+\alpha_\ell'=\alpha' \\ |\alpha_i'|\geq1}} \hspace{-3mm} c_{\alpha_1'\cdots\alpha_\ell'} 
(\partial_{\xi'}^{\alpha_1'}{m(\lambda,\xi')})\cdots(\partial_{\xi'}^{\alpha_\ell'}{m(\lambda,\xi')})\Bigg| \\ 
&\leq \sum_{\ell=1}^{|\alpha'|} \frac{C_{\alpha'}}{|m(\lambda,\xi')|} 
\sum_{\substack{\alpha_1'+\cdots+\alpha_\ell'=\alpha' \\ |\alpha_i'|\geq1}} \hspace{-3mm} 
\bigg(\frac{|\partial_{\xi'}^{\alpha_1'}{m(\lambda,\xi')}|}{|m(\lambda,\xi')|}\bigg) 
\cdots \bigg(\frac{|\partial_{\xi'}^{\alpha_\ell'}{m(\lambda,\xi')}|}{|m(\lambda,\xi')|}\bigg) \\ 
&\leq \sum_{\ell=1}^{|\alpha'|} \frac{C_{\alpha'}}{f(\lambda,\xi')} 
\sum_{\substack{\alpha_1'+\cdots+\alpha_\ell'=\alpha' \\ |\alpha_i'|\geq1}} \hspace{-3mm} 
\bigg(\frac{f(\lambda,\xi'){A}^{-|\alpha_1'|}}{f(\lambda,\xi')}\bigg) 
\cdots \bigg(\frac{f(\lambda,\xi'){A}^{-|\alpha_\ell'|}}{f(\lambda,\xi')}\bigg) \\
&\leq C_{\alpha'} f(\lambda,\xi')^{-1}{A}^{-|\alpha'|}. 
\end{align*}
Hence, the proof is complete since the case $\alpha'=0$ is obvious by \eqref{eq:LemIMC0}. 
\end{proof}




\begin{proof}[Proof of Lemma \ref{Lem:EstidetL}]
The proof is divided into three steps: 
\begin{itemize}
\item[(i)] 
For any $(\lambda,\xi') \in (\mathbb{C}\setminus(-\infty,0])\times(\mathbb{R}^{N-1}\setminus\{0\})$, 
$\det{\fb{L}}\ne0$. 
\item[(ii)] 
For any $0<\varepsilon<\pi/2$ and $\gamma_0>0$, there exists $c_{\varepsilon,\gamma_0}>0$ such that 
\begin{align} \label{eq:estidetLfb}
|\det{\fb{L}}| \geq c_{\varepsilon,\gamma_0}(|\lambda|^{1/2}+A)^6\min\{ 1,A \} \ 
(\forall(\lambda,\xi') \in \Sigma_{\varepsilon,\gamma_0}\times(\mathbb{R}^{N-1}\setminus\{0\})).
\end{align}
\item[(iii)]
Conclusion \eqref{esti:InvdetL}. 
\end{itemize}

(i) Following the argument in \cite[Lemma 2.2]{Abe04MMAS}, 
we first prove $\det{\fb{L}}\ne0$ under the assumption $\text{Im\,}\lambda\ne0$. 
We argue by contradiction. 
Assume that $\det{\fb{L}}=0$ with some $(\lambda,\xi')\in(\mathbb{C}\setminus{\mathbb{R}})\times(\mathbb{R}^{N-1}\setminus\{0\})$. 
Then there exists 
\begin{align} \label{eq:EnerAss}
\fb{x}:=(\mu_{1N},\mu_{2N},\beta_{1N},\beta_{2N})^\mathsf{T}\ne0 
\end{align}
such that $\fb{L}\fb{x}=0$. We define 
\begin{gather}
\widehat{u}_d(x_N)=\widehat{u}_d(\xi',x_N)\text{ in \eqref{eq:ODEsolform}}, \quad 
\widehat{u}_N(x_N)=\widehat{u}_N(\xi',x_N)\text{ in \eqref{eq:ODEsolform}}, \\ 
\widehat{\theta}(x_N)=\widehat{\theta}(\xi',x_N)\text{ in \eqref{eq:ODEsolform}} 
\end{gather} 
with coefficients 
\begin{align}
\mu_{\ell d} = (-1)^\ell A\mu_{\ell N}, \quad \beta_{\ell d} = (-1)^\ell(\mu_{\ell N}+B\beta_{\ell N}), \quad 
\gamma_\ell=(-1)^{\ell-1}\frac{\mu(B+A)}{A}\mu_{\ell N}, 
\end{align}
which can be deduced by \eqref{eq:ODErelation} and \eqref{eq:RelateODEsf}. 
Note that $\widehat{u}_d, \widehat{u}_N, \widehat{\theta} \in C^\infty([0,\delta])$ by the definition \eqref{eq:ODEsolform}. 
Then they obey \eqref{eq:FSRdht} with zero data, that is, 
\begin{align} \label{eq:FSRdht0} 
\left\{
\begin{array}{ll}
\mu(B^2-\partial_N^2) \widehat{u}_d(x_N) - A^2\widehat{\theta}(x_N) = 0 & 0\leq x_N\leq \delta, \\
\mu(B^2-\partial_N^2) \widehat{u}_N(x_N) + \partial_N{\widehat{\theta}(x_N)} = 0 & 0\leq x_N\leq \delta, \\ 
\widehat{u}_d(x_N) + \partial_N{\widehat{u}_N(x_N)} = 0 & 0\leq x_N\leq \delta, \\
\mu(\partial_N{\widehat{u}_d(x_N)} - A^2\widehat{u}_N(x_N))\nu_N(x_N) = 0 & x_N \in \{0,\delta\}, \\
(2\mu{\partial}_N{\widehat{u}_N(x_N)} - \widehat{\theta}(x_N))\nu_N(x_N) = 0 & x_N \in \{0,\delta\} 
\end{array}
\right.
\end{align} 
and satisfy 
\begin{align} \label{eq:FSRdhp2}
&(\partial_N^2-A^2)\widehat{\theta}(x_N)=0 \quad (0\leq x_N\leq \delta), \\ \label{eq:FSRdhuN2}
&(A^2-\partial_N^2)(B^2-\partial_N^2)\widehat{u}_N(x_N)=0 \quad (0\leq x_N\leq \delta) 
\end{align}
as in \eqref{eq:FSRdhp} and \eqref{eq:FSRdhu}. 
By inserting the third equation to the fourth one, we also get 
\begin{align} \label{eq:FSRdhuN3}
(\partial_N^2+A^2)\widehat{u}_N(x_N)=0 \quad (x_N \in \{0,\delta\}). 
\end{align}
Multiplying \eqref{eq:FSRdhuN2} by $\overline{\widehat{u}_N(x_N)}$, integrating the resultant formula over $(0,\delta)$, 
and integration by parts yield 
\begin{align} \label{eq:FSRibp}
\qquad \begin{aligned}
&\big(\partial_N^3\widehat{u}_N(\delta)\overline{\widehat{u}_N(\delta)}-\partial_N^3\widehat{u}_N(0)\overline{\widehat{u}_N(0)}\big) \\ & 
-\big(\partial_N^2\widehat{u}_N(\delta)\overline{\partial_N{\widehat{u}_N}(\delta)}-\partial_N^2\widehat{u}_N(0)\overline{\partial_N{\widehat{u}_N}(0)}\big) 
+\|\partial_N^2\widehat{u}_N\|_{L_2(0,\delta)}^2  \\ \quad & 
-(B^2+A^2)\big(\partial_N{\widehat{u}_N}(\delta)\overline{\widehat{u}_N(\delta)}-\partial_N{\widehat{u}_N}(0)\overline{\widehat{u}_N(0)}\big) 
+(B^2+A^2)\|\partial_N{\widehat{u}_N(\xi',\cdot)}\|_{L_2(0,\delta)}^2 \quad \\ & 
+A^2B^2\|\widehat{u}_N(\xi',\cdot)\|_{L_2(0,\delta)}^2 \\ & 
=0. 
\end{aligned} \qquad 
\end{align}
From the second equation of \eqref{eq:FSRdht0} multiplied by $\partial_N$, 
from \eqref{eq:FSRdhp2} and from the fifth equation in \eqref{eq:FSRdht0}, 
we get 
\begin{align} \notag 
\partial_N^3\widehat{u}_N=B^2\partial_N{\widehat{u}_N}+\mu^{-1}{\partial}_N^2\widehat{\theta}
=B^2\partial_N{\widehat{u}_N}+\mu^{-1}A^2\widehat{\theta}=(B^2+2A^2)\partial_N{\widehat{u}_N}\text{ on } \{0,\delta\}. 
\end{align}
By this and \eqref{eq:FSRdhuN3}, the equation \eqref{eq:FSRibp} implies 
\begin{align}
&2A^2\text{Re\,}{\big(\partial_N{\widehat{u}_N}(\delta)\overline{\widehat{u}_N(\delta)}-\partial_N{\widehat{u}_N}(0)\overline{\widehat{u}_N(0)}\big)} \\ \notag & 
+\|\partial_N^2\widehat{u}_N\|_{L_2(0,\delta)}^2
+(B^2+A^2)\|\partial_N{\widehat{u}_N}\|_{L_2(0,\delta)}^2+A^2B^2\|\widehat{u}_N\|_{L_2(0,\delta)}^2=0. 
\end{align}
If we take the imaginary part, we have $\widehat{u}_N=0$ by  $\text{Im\,}\lambda\ne0$, 
but this contradicts the assumption \eqref{eq:EnerAss}. 


Next, we show $\det{\fb{L}}\ne0$ even if $\lambda>0$. If we set 
\begin{align}
x=\sqrt{1+\lambda/(\mu A^2)}, 
\end{align}
we have $B=Ax$ and, so, by the definition \eqref{eq:detL} of $\det{\fb{L}}$, 
\begin{align} \label{eq:detLfA}
(B-A)^2\det{\fb{L}}= A^8\prod_{+,-} f_A^\pm(x)
\end{align}
with 
\begin{align*} 
f_A^\pm(x) &= (x^2+1)^2(1\pm{e^{-A\delta }})(1\mp{e^{-A\delta x}}) - 4x(1\mp{e^{-A\delta }})(1\pm{e^{-A\delta x}}). 
\end{align*}
Also, we rewrite $f_A^-(x)$ as  
\begin{align*}
f_A^-(x) 
&=\big((x^2+1)^2-4x\big)(1-e^{-A\delta (x+1)})-\big((x^2+1)^2+4x\big)(e^{-A\delta }-e^{-A\delta x}) \\
&=(x-1)(x^3+x^2+3x-1)A\delta(x+1)\int_0^1 e^{-A\delta(x+1)t} \, dt \\ & 
	-(x^4+2x^2+4x+1)A\delta(x-1)\int_0^1 e^{-A\delta(1+(x-1)t)} \, dt. 
\end{align*}
Then we have $f_A^\pm(x)=p_1^\pm p_2^\pm p_3^\pm-q_1^\pm q_2^\pm q_3^\pm$ 
with $p_1^\pm \geq q_1^\pm>0$ and $p_i^\pm > q_i^\pm>0$ for $i=2,3$, where  
\begin{align*}
p_1^+&=(x^2+1)^2,& 
p_2^+&=1+e^{-A\delta },& 
p_3^+&=1-e^{-A\delta x}, \\ 
q_1^+&=4x,& 
q_2^+&=1+e^{-A\delta x},& 
q_3^+&=1-e^{-A\delta }, \\ 
p_1^-&=(x-1)A\delta,& 
p_2^-&=(x^3+x^2+3x-1)(x+1),& 
p_3^-&=\int_0^1 e^{-A\delta(1+x)t} \, dt, \\ 
q_1^-&=(x-1)A\delta,& 
q_2^-&=x^4+2x^2+4x+1,& 
q_3^-&=\int_0^1 e^{-A\delta(1+(x-1)t)} \, dt.  
\end{align*}
In fact, to verify $p_3^->q_3^-$, we observe 
\begin{align*}
&p_3^--q_3^- 
= \int_0^1 (e^{-A\delta(1+x)t}-e^{-A\delta(1+(x-1)t)})\,dt 
= \int_0^1\left( \int_0^1 A\delta(1-2t)e^{-A\delta(s+(1+x-2s)t)} \,ds \right) \,dt \\ 
&= A\delta \int_0^1 e^{-A\delta s} \int_0^1 (1-2t)e^{-\alpha(s)t}\,dt\,ds
= A\delta \int_0^1 e^{-A\delta s} \int_0^{1/2} (1-2t)(e^{-\alpha(s)t}-e^{-\alpha(s)(1-t)})\,dt\,ds, 
\end{align*}
where we have set $\alpha(s)=A\delta(1+x-2s)>0$ for $s\in(0,1)$. 
Since the integrand of the right-hand side is positive, we get $p_3^--q_3^->0$. 
Other inequalities are verified easily. 
Hence, we get $f_A^\pm(x)>0$ for $x>1$ and $A>0$, 
which implies $\det{\fb{L}}\ne0$ for $(\lambda,\xi')\in(0,\infty)\times(\mathbb{R}^{N-1}\setminus\{0\})$ by \eqref{eq:detLfA}. 

(ii) We shall show the estimate \eqref{eq:estidetLfb} of $\det{\fb{L}}$ in this step. Set 
\begin{align*}
\ell_\pm(A,B)
=\frac{1}{(B-A)} \{ (B^2+A^2)^2(1\pm e^{-A\delta })(1\mp e^{-B\delta }) - 4A^3B(1\mp e^{-A\delta })(1\pm e^{-B\delta }) \} 
\end{align*}
so that 
\begin{align}
\det{\fb{L}}=\ell_+(A,B){\ell_-(A,B)}. 
\end{align} 
Then it is sufficient to prove
\begin{align} \label{eq:EstidetL0pm} 
|\ell_+(A,B)| \geq c(|\lambda|^{1/2}+A)^3, \quad
|\ell_-(A,B)| \geq c(|\lambda|^{1/2}+A)^3\min\{ 1,A \}. 
\end{align}
We first show \eqref{eq:EstidetL0pm} for $(\lambda,\xi')\in{\Sigma_{\varepsilon,\gamma_0}}\times(\mathbb{R}^{N-1}\setminus\{0\})$ 
such that $|\xi'|>M$ for some $M>0$. 
We rewrite $\ell_\pm(A,B)$ as 
\begin{align} \label{eq:RewritedetL} 
\begin{aligned}
&\ell_\pm(A,B) \\ 
&=\frac{1}{(B-A)} \Big\{ \big((B^2+A^2)^2-4A^3B\big)(1-e^{-(B+A)\delta}) 
	\mp \big((B^2+A^2)^2+4A^3B\big)(e^{-B\delta }-e^{-A\delta }) \Big\} \\  
&=\frac{(B-A)(B^3+AB^2+3A^2B-A^3)}{(B-A)}(1-e^{-(B+A)\delta}) 
	\mp \big((B^2+A^2)^2+4A^3B\big)\frac{e^{-B\delta }-e^{-A\delta }}{B-A} \\ 
&=D_3(1-e^{-(B+A)\delta}) \mp (B^4+2A^2B^2+4A^3B+A^4)\mathcal{M}(\delta), 
\end{aligned}
\end{align}
where $D_3$ and $\mathcal{M}(\delta)$ is given by \eqref{eq:defDis} and \eqref{eq:DefccM} with $x_N=\delta$. 
Thus, by \eqref{esti:equivB}, \eqref{esti:multiclasski} with $\ell, \alpha'=0$ and 
\begin{align} \label{eq:estipb}
|e^{-B\delta }| = e^{-\text{Re\,} B\delta} \leq e^{-c_{\varepsilon,\mu}(|\lambda|^{1/2}+A)\delta} \leq 1, 
\end{align}
we have 
\begin{align*}
|\ell_\pm(A,B)| 
&\geq |D_3| - |D_3 e^{-B\delta}e^{-A\delta}| - |B^{-1}(B^4+2A^2B^2+4A^3B+A^4)B{\mathcal{M}}(\delta)| \\
&\geq 2c(|\lambda|^{1/2}+A)^3 - C_1(|\lambda|^{1/2}+A)^3e^{-M\delta} - C_2(|\lambda|^{1/2}+A)^3e^{-c_{\varepsilon,\mu}M\delta} \\ 
&\geq c(|\lambda|^{1/2}+A)^3 
\end{align*}
for $(\lambda,\xi')\in{\Sigma_{\varepsilon,\gamma_0}}\times(\mathbb{R}^{N-1}\setminus\{0\})$ with $A>M$, 
by taking $M>0$ so large that $C_1e^{-M\delta}+C_2e^{-c_{\varepsilon,\mu}M\delta} \leq c$. 

Next, we consider the case where $(\lambda,\xi')\in{\Sigma_{\varepsilon,\gamma_0}}\times(\mathbb{R}^{N-1}\setminus\{0\})$ 
such that $|\xi'|/|\lambda|^{1/2}<\eta$ for some $\eta>0$. 
Define $n_\pm(D,A,B)$ by 
\begin{align} \label{eq:detLcase2}
\begin{aligned}
\ell_\pm(A,B) 
&= \frac{B^3}{1-D} \{ (1+2D^2+D^4)(1\pm e^{-A\delta })(1\mp e^{-B\delta }) - 4D^3(1\mp e^{-A\delta })(1\pm e^{-B\delta }) \} \\
&=: \frac{B^3}{1-D} n_\pm(D,A,B) 
\end{aligned}
\end{align}
where 
\begin{align}
D=\frac A B. 
\end{align}
Since $|1+e^{-B\delta}|$ is continuous and nonzero for $(\lambda,\xi')\in{\Sigma_{\varepsilon,\gamma_0}}\times(\mathbb{R}^{N-1}\setminus\{0\})$, 
it has the minimum value $c_0$.  
Then, since
\begin{align}
&1-e^{-A\delta }\left\{ 
\begin{aligned}
&\geq 1-e^{-\delta} &&\text{for } A\geq1, \\
&= A\delta\int_0^1 e^{-A\delta t} \,dt \geq A\delta e^{-\delta} &&\text{for } 0<A\leq1, 
\end{aligned} 
\right. \\ \label{esti:Dbymin1A}
&|D| \leq \frac{A}{c_{\varepsilon,\mu}(|\lambda|^{1/2}+A)} \leq 
\left\{ \begin{aligned}
&\eta/c_{\varepsilon,\mu} \leq 1/2, \\ 
&A/(c_{\varepsilon,\mu}\gamma_0^{1/2}) 
\end{aligned} 
\right. \quad  
\end{align}
(by taking $\eta$ so small if necessary), 
we get $1-e^{-A\delta} \geq 2c\min\{1,A\}$ for some $c>0$ 
and $|D|\leq C_{\gamma_0}\min\{1,A\}$ for some $C_{\gamma_0}>0$. 
Then we obtain 
\begin{align} \label{eq:Estin-}
\begin{aligned}
&|n_-(D,A,B)| \\ 
&\geq |(1-e^{-A\delta })(1+e^{-B\delta })| - |(2D^2+D^4)(1-e^{-A\delta })(1+e^{-B\delta })|-4|D^3(1+e^{-A\delta })(1-e^{-B\delta })| \\ 
&\geq 2cc_0\min\{1,A\}-C_1C_{\gamma_0}(\eta/c_{\varepsilon,\mu})\min\{1,A\}
-C_2C_{\gamma_0}(\eta/c_{\varepsilon,\mu})^2\min\{1,A\} \\ 
&\geq cc_0\min\{1,A\} 
\end{aligned}
\end{align} 
if we take $\eta>0$ so small that 
$C_1C_{\gamma_0}(\eta/c_{\varepsilon,\mu})+C_2C_{\gamma_0}(\eta/c_{\varepsilon,\mu})^2\leq cc_0$. 
Moreover, by \eqref{esti:Dbymin1A} and 
\begin{align}
|e^{-B\delta }| \leq e^{-c(|\lambda|^{1/2}+A)\delta}\leq e^{-c\gamma_0^{1/2}\delta}, 
\end{align}
we have
\begin{align} \label{eq:Estin+}
\begin{aligned}
&|n_+(D,A,B)| \\ 
&\geq |(1+e^{-A\delta })(1-e^{-B\delta })|-|(2D^2+D^4)(1+e^{-A\delta })(1-e^{-B\delta })|-4|D^3(1-e^{-A\delta })(1+e^{-B\delta })| \\ 
&\geq (1-e^{-c\gamma_0^{1/2}\delta})-12(\eta/c_{\varepsilon,\mu})^2-8(\eta/c_{\varepsilon,\mu})^3 \\ 
&\geq (1-e^{-c\gamma_0^{1/2}\delta})/2 
\end{aligned}
\end{align}
if we take $\eta>0$ so small that 
$12(\eta/c_{\varepsilon,\mu})^2+8(\eta/c_{\varepsilon,\mu})^3\leq (1-e^{-c\gamma_0^{1/2}\delta})/2$. 
Thus, by \eqref{esti:equivB} and \eqref{eq:detLcase2}--\eqref{eq:Estin+}, 
we obtain \eqref{eq:EstidetL0pm} in this case.   

Finally, the estimate \eqref{eq:EstidetL0pm} also holds on the remainder region 
\begin{align}
D_r = \{ (\lambda,\xi') \in \overline{\Sigma_{\varepsilon,\gamma_0}}\times(\mathbb{R}^{N-1}\setminus\{0\}) 
\mid |\xi'|\leq M, \ |\xi'|/|\lambda|^{1/2}\geq \eta \} 
\end{align}
by step (i) since $\det{\fb{L}}$ is a continuous function of $(\lambda,\xi')$ on $D_r$ and $D_r$ is compact.

(iii) By Lemma \ref{Lem:InvMultiClass} and step (ii), we will obtain \eqref{esti:InvdetL} 
if we prove \eqref{eq:LemIMC1} with 
$m(\lambda,\xi')=\det{\fb{L}}$ and $f(\lambda,\xi')=c_{\varepsilon,\gamma_0}(|\lambda|^{1/2}+A)^6\min\{ 1,A \}$. 
Thus, by the Leibniz rule, it suffices to show  
\begin{align} \label{eq:EstidetL1p} 
|(\tau{\partial}_\tau)^\ell{\partial_{\xi'}^{\beta'}} \ell_+(A,B)| &\leq C_{\varepsilon,\beta'}(|\lambda|^{1/2}+A)^3 A^{-|\beta'|}, \\ \label{eq:EstidetL1m}
|(\tau{\partial}_\tau)^\ell{\partial_{\xi'}^{\beta'}} \ell_-(A,B)| &\leq C_{\varepsilon,\gamma_0,\beta'}(|\lambda|^{1/2}+A)^3 \min\{ 1,A \} A^{-|\beta'|} 
\end{align}
for any $(\lambda,\xi')\in{\Sigma_{\varepsilon,\gamma_0}}\times(\mathbb{R}^{N-1}\setminus\{0\})$, $\ell = 0,1$ and multi-index $\beta'\in{\mathbb{N}}_0^{N-1}$. 
The estimate \eqref{eq:EstidetL1m} for $A\geq1$ and \eqref{eq:EstidetL1p} 
are obtained by \eqref{eq:RewritedetL} and Lemma \ref{Lem:EstiSymbol}. 
We shall show \eqref{eq:EstidetL1m} for $A\leq1$. 
By \eqref{eq:RewritedetL}, 
\begin{align*}
\ell_-(A,B) 
&= (B^3+AB^2+3A^2B-A^3)(1-e^{-(B+A)\delta}) \\
&+ (B^3(B-A)+(AB^3+2A^2B^2+4A^3B+A^4))\mathcal{M}(\delta) \\
&= B^3((1-e^{-(B+A)\delta})+(e^{-B\delta }-e^{-A\delta })) \\ 
&+ A\{ (B^2+3AB-A^2)(1-e^{-(B+A)\delta})+(B^3+2AB^2+4A^2B+A^3)\mathcal{M}(\delta) \}. 
\end{align*}
Then, if we rewrite the first term of the right-hand side as 
\begin{align}
&B^3((1-e^{-(B+A)\delta})+(e^{-B\delta }-e^{-A\delta })) \\ 
&=B^3(1-e^{-A\delta })(1+e^{-B\delta }) = B^3A\delta \int_0^1 e^{-A\delta t} \,dt (1+e^{-B\delta }), 
\end{align}
by Lemma \ref{Lem:EstiSymbol}, we get
\begin{align} \label{eq:EstiDl-}
\begin{aligned}
&|(\tau{\partial}_\tau)^\ell{\partial_{\xi'}^{\beta'}} \ell_-(A,B)| \\ 
&\leq \int_0^1 \big|(\tau{\partial}_\tau)^\ell{\partial_{\xi'}^{\beta'}}\big(B^3A\delta{e^{-A\delta t}}(1+e^{-B\delta })\big)\big|\,dt \\ 
&+ |(\tau{\partial}_\tau)^\ell{\partial_{\xi'}^{\beta'}}\big(A\{ 
(B^2+3AB-A^2)(1-e^{-(B+A)\delta})+(B^3+2AB^2+4A^2B+A^3)\mathcal{M}(\delta) \}\big)| \\ 
&\leq C_{\beta',\varepsilon} \{(|\lambda|^{1/2}+A)^3A\delta + A(|\lambda|^{1/2}+A)^2\} A^{-|\beta'|} \\ 
&\leq C_{\beta',\varepsilon} (\delta+\gamma_0^{-1/2}) (|\lambda|^{1/2}+A)^3 A^{1-|\beta'|}. 
\end{aligned}
\end{align}
This proves \eqref{eq:EstidetL1m} for $A\leq1$ and, thus, the proof is complete. 
\end{proof}




From now on, in order to prove the assertions for $\mathcal{S}_N(\lambda)$ and $\mathcal{T}(\lambda)$ in Theorem \ref{Thm:Rbdddh}, 
we rewrite the solution formula \eqref{eq:solform} of $u_N$ and $\theta$. 
We will construct $\mathcal{S}_j(\lambda)$ ($j=1,\cdots,N-1$) together with the $\mathscr{R}$-boundedness at the end of this section. 
As one can see from \eqref{esti:InvdetL}, the estimate of $\det{\fb{L}}$ is inhomogeneous in the sense that 
\begin{align}
|\det{\fb{L}}|^{-1}=
\left\{ 
\begin{array}{ll}
O(|\xi'|^{-1}), &\text{as } \xi'\to0, \\ 
O(1), &\text{as } |\xi'|\to\infty 
\end{array}
\right. 
\end{align}  
for fixed $\lambda\in{\Sigma_{\varepsilon,\gamma_0}}$. 
In order to overcome this difficulty, we divide each term of the solution formula into two parts: 
the part with the same singularity as that for the case of the Neumann-Dirichlet boundary condition 
and the one with higher singularity. 
Let $\zeta_0, \zeta_1\in C^\infty(\mathbb{R}^{N-1})$ be cut-off functions such that
\begin{align} \label{eq:Defzetai}
0\leq\zeta_0(\xi')\leq1, \quad \zeta_0(\xi') = \left\{
\begin{array}{ll}
1, & |\xi'| \geq 2,\\
0, & |\xi'| \leq 1,
\end{array}
\right. 
\quad \zeta_1(\xi')=A(1-\zeta_0(\xi')) 
\end{align}
so that $1=\zeta_0(\xi')+\zeta_1(\xi')/A$. 
By using this, we rewrite \eqref{eq:solform}: 
\begin{align} \label{eq:rwSN} 
u_N
&= \sum_{j=1}^{N-1}\sum_{\ell=1}^2 \mathscr{F}_{\xi'}^{-1}\left[ 
	\left\{\zeta_0{\frac{i\xi_j}{A}\frac{L_{1,2\ell-1}}{\mu B\det{\fb{L}}}} A+\zeta_1{\frac{i\xi_j}{A}\frac{L_{1,2\ell-1}}{\mu B\det{\fb{L}}}}\right\} 
	B{\mathcal{M}}(d_\ell(x_N)) \widehat{h}_j(\xi',\delta) \right] \\ \notag
&+ \sum_{j=1}^{N-1}\sum_{\ell=1}^2 \mathscr{F}_{\xi'}^{-1}\left[ 
	\left\{\zeta_0{\frac{i\xi_j}{A}\frac{L_{1,2\ell   }}{\mu   \det{\fb{L}}}} A+\zeta_1{\frac{i\xi_j}{A}\frac{L_{1,2\ell   }}{\mu   \det{\fb{L}}}}\right\} 
	e^{-Bd_\ell(x_N)}   \widehat{h}_j(\xi',\delta) \right] \\ \notag
&- \sum_{j=1}^{N-1}\sum_{\ell=1}^2 \mathscr{F}_{\xi'}^{-1}\left[ 
	\left\{\zeta_0{\frac{i\xi_j}{A}\frac{L_{2,2\ell-1}}{\mu B\det{\fb{L}}}} A+\zeta_1{\frac{i\xi_j}{A}\frac{L_{2,2\ell-1}}{\mu B\det{\fb{L}}}}\right\} 
	B{\mathcal{M}}(d_\ell(x_N)) \widehat{h}_j(\xi',0) \right] \\ \notag
&- \sum_{j=1}^{N-1}\sum_{\ell=1}^2 \mathscr{F}_{\xi'}^{-1}\left[ 
	\left\{\zeta_0{\frac{i\xi_j}{A}\frac{L_{2,2\ell   }}{\mu   \det{\fb{L}}}} A+\zeta_1{\frac{i\xi_j}{A}\frac{L_{2,2\ell   }}{\mu   \det{\fb{L}}}}\right\} 
	e^{-Bd_\ell(x_N)}   \widehat{h}_j(\xi',0) \right] \\ \notag
&+ \sum_{\ell=1}^2 \mathscr{F}_{\xi'}^{-1}\left[ 
	\left\{\zeta_0{\frac{L_{3,2\ell-1}}{\mu B\det{\fb{L}}}} A+\zeta_1{\frac{L_{3,2\ell-1}}{\mu B\det{\fb{L}}}}\right\} 
	B{\mathcal{M}}(d_\ell(x_N)) \widehat{h}_N(\xi',\delta) \right] \\ \notag
&+ \sum_{\ell=1}^2 \mathscr{F}_{\xi'}^{-1}\left[ 
	\left\{\zeta_0{\frac{L_{3,2\ell   }}{\mu   \det{\fb{L}}}} A+\zeta_1{\frac{L_{3,2\ell   }}{\mu   \det{\fb{L}}}}\right\} 
	e^{-Bd_\ell(x_N)}   \widehat{h}_N(\xi',\delta) \right] \\ \notag
&- \sum_{\ell=1}^2 \mathscr{F}_{\xi'}^{-1}\left[ 
	\left\{\zeta_0{\frac{L_{4,2\ell-1}}{\mu B\det{\fb{L}}}} A+\zeta_1{\frac{L_{4,2\ell-1}}{\mu B\det{\fb{L}}}}\right\} 
	B{\mathcal{M}}(d_\ell(x_N)) \widehat{h}_N(\xi',0) \right] \\ \notag
&- \sum_{\ell=1}^2 \mathscr{F}_{\xi'}^{-1}\left[ 
	\left\{\zeta_0{\frac{L_{4,2\ell   }}{\mu   \det{\fb{L}}}} A+\zeta_1{\frac{L_{4,2\ell   }}{\mu   \det{\fb{L}}}}\right\} 
	e^{-Bd_\ell(x_N)}   \widehat{h}_N(\xi',0) \right] \\ \notag
&\begin{aligned}
= \sum_{j=1}^{N-1}\sum_{\ell=1}^2 \bigg\{ &\mathcal{K}^3_{2,\ell,1}\left(\lambda,\zeta_0{\frac{i\xi_j}{A}\frac{L_{1,2\ell-1}}{\mu B\det{\fb{L}}}}\right)(M_{\lambda^{-\frac12}}\lambda^{\frac12}{h_j},\nabla h_j) \\ \notag 
&+\mathcal{K}^3_{3,\ell,1}\left(\lambda,\zeta_1{\frac{i\xi_j}{A}\frac{L_{1,2\ell-1}}{\mu B\det{\fb{L}}}}\right)(M_{\lambda^{-\frac12}}\lambda^{\frac12}{h_j},\nabla h_j)\bigg\}
\end{aligned} \\ \notag
&\begin{aligned}{+} \sum_{j=1}^{N-1}\sum_{\ell=1}^2 \bigg\{&\mathcal{K}^1_{2,\ell,1}\left(\lambda,\zeta_0{\frac{i\xi_j}{A}\frac{L_{1,2\ell   }}{\mu   \det{\fb{L}}}}\right)(M_{\lambda^{-\frac12}}\lambda^{\frac12}{h_j},\nabla h_j) \\ \notag &+\mathcal{K}^1_{3,\ell,1}\left(\lambda,\zeta_1{\frac{i\xi_j}{A}\frac{L_{1,2\ell   }}{\mu   \det{\fb{L}}}}\right)(M_{\lambda^{-\frac12}}\lambda^{\frac12}{h_j},\nabla h_j)\bigg\}\end{aligned} \\ \notag
&\begin{aligned}{-} \sum_{j=1}^{N-1}\sum_{\ell=1}^2 \bigg\{&\mathcal{K}^3_{2,\ell,2}\left(\lambda,\zeta_0{\frac{i\xi_j}{A}\frac{L_{2,2\ell-1}}{\mu B\det{\fb{L}}}}\right)(M_{\lambda^{-\frac12}}\lambda^{\frac12}{h_j},\nabla h_j) \\ \notag &+\mathcal{K}^3_{3,\ell,2}\left(\lambda,\zeta_1{\frac{i\xi_j}{A}\frac{L_{2,2\ell-1}}{\mu B\det{\fb{L}}}}\right)(M_{\lambda^{-\frac12}}\lambda^{\frac12}{h_j},\nabla h_j)\bigg\}\end{aligned} \\ \notag
&\begin{aligned}
{-} \sum_{j=1}^{N-1}\sum_{\ell=1}^2 \bigg\{&\mathcal{K}^1_{2,\ell,2}\left(\lambda,\zeta_0{\frac{i\xi_j}{A}\frac{L_{2,2\ell   }}{\mu   \det{\fb{L}}}}\right)(M_{\lambda^{-\frac12}}\lambda^{\frac12}{h_j},\nabla h_j) \\ \notag 
&+\mathcal{K}^1_{3,\ell,2}\left(\lambda,\zeta_1{\frac{i\xi_j}{A}\frac{L_{2,2\ell   }}{\mu   \det{\fb{L}}}}\right)(M_{\lambda^{-\frac12}}\lambda^{\frac12}{h_j},\nabla h_j)\bigg\}
\end{aligned} \\ \notag
&+ \sum_{\ell=1}^2 \left\{\mathcal{K}^3_{2,\ell,1}\left(\lambda,\zeta_0{\frac{L_{3,2\ell-1}}{\mu B\det{\fb{L}}}}\right)(h_N,\nabla h_N)+\mathcal{K}^3_{3,\ell,1}\left(\lambda,\zeta_1{\frac{L_{3,2\ell-1}}{\mu B\det{\fb{L}}}}\right)(h_N,\nabla h_N)\right\} \\ \notag
&+ \sum_{\ell=1}^2 \left\{\mathcal{K}^1_{2,\ell,1}\left(\lambda,\zeta_0{\frac{L_{3,2\ell   }}{\mu   \det{\fb{L}}}}\right)(h_N,\nabla h_N)+\mathcal{K}^1_{3,\ell,1}\left(\lambda,\zeta_1{\frac{L_{3,2\ell   }}{\mu   \det{\fb{L}}}}\right)(h_N,\nabla h_N)\right\} \\ \notag
&- \sum_{\ell=1}^2 \left\{\mathcal{K}^3_{2,\ell,2}\left(\lambda,\zeta_0{\frac{L_{4,2\ell-1}}{\mu B\det{\fb{L}}}}\right)(h_N,\nabla h_N)+\mathcal{K}^3_{3,\ell,2}\left(\lambda,\zeta_1{\frac{L_{4,2\ell-1}}{\mu B\det{\fb{L}}}}\right)(h_N,\nabla h_N)\right\} \\ \notag
&- \sum_{\ell=1}^2 \left\{\mathcal{K}^1_{2,\ell,2}\left(\lambda,\zeta_0{\frac{L_{4,2\ell   }}{\mu   \det{\fb{L}}}}\right)(h_N,\nabla h_N)+\mathcal{K}^1_{3,\ell,2}\left(\lambda,\zeta_1{\frac{L_{4,2\ell   }}{\mu   \det{\fb{L}}}}\right)(h_N,\nabla h_N)\right\}, \\ \label{eq:rwP}
\theta
&= \sum_{j=1}^{N-1}\sum_{\ell=1}^2 \mathscr{F}_{\xi'}^{-1}\left[ 
	\zeta_0{(-1)^{\ell-1}\mu\frac{(B+A)}{A}\frac{i\xi_j}{A}\frac{L_{1,2\ell-1}}{\det{\fb{L}}}} A 
	e^{-Ad_\ell(x_N)} \widehat{h}_j(\xi',\delta) \right] \\ \notag
&+ \sum_{j=1}^{N-1}\sum_{\ell=1}^2 \mathscr{F}_{\xi'}^{-1}\left[ 
	\zeta_1{(-1)^{\ell-1}\mu\frac{(B+A)}{A}\frac{i\xi_j}{A}\frac{L_{1,2\ell-1}}{\det{\fb{L}}}} 
	e^{-Ad_\ell(x_N)} \widehat{h}_j(\xi',\delta) \right] \\ \notag
&- \sum_{j=1}^{N-1}\sum_{\ell=1}^2 \mathscr{F}_{\xi'}^{-1}\left[ 
	\zeta_0{(-1)^{\ell-1}\mu\frac{(B+A)}{A}\frac{i\xi_j}{A}\frac{L_{2,2\ell-1}}{\det{\fb{L}}}} A
	e^{-Ad_\ell(x_N)} \widehat{h}_j(\xi',0) \right] \\ \notag
&- \sum_{j=1}^{N-1}\sum_{\ell=1}^2 \mathscr{F}_{\xi'}^{-1}\left[ 
	\zeta_1{(-1)^{\ell-1}\mu\frac{(B+A)}{A}\frac{i\xi_j}{A}\frac{L_{2,2\ell-1}}{\det{\fb{L}}}} 
	e^{-Ad_\ell(x_N)} \widehat{h}_j(\xi',0) \right] \\ \notag
&+ \sum_{\ell=1}^2 \mathscr{F}_{\xi'}^{-1}\left[ 
	\zeta_0{(-1)^{\ell-1}\mu\frac{(B+A)}{A}\frac{L_{3,2\ell-1}}{\det{\fb{L}}}} A
	e^{-Ad_\ell(x_N)} \widehat{h}_N(\xi',\delta) \right] \\ \notag
&+ \sum_{\ell=1}^2 \mathscr{F}_{\xi'}^{-1}\left[ 
	\zeta_1{(-1)^{\ell-1}\mu\frac{(B+A)}{A}\frac{L_{3,2\ell-1}}{\det{\fb{L}}}} 
	e^{-Ad_\ell(x_N)} \widehat{h}_N(\xi',\delta) \right] \\ \notag
&- \sum_{\ell=1}^2 \mathscr{F}_{\xi'}^{-1}\left[ 
	\zeta_0{(-1)^{\ell-1}\mu\frac{(B+A)}{A}\frac{L_{4,2\ell-1}}{\det{\fb{L}}}} A
	e^{-Ad_\ell(x_N)} \widehat{h}_N(\xi',0) \right] \\ \notag 
&- \sum_{\ell=1}^2 \mathscr{F}_{\xi'}^{-1}\left[ 
	\zeta_1{(-1)^{\ell-1}\mu\frac{(B+A)}{A}\frac{L_{4,2\ell-1}}{\det{\fb{L}}}} 
	e^{-Ad_\ell(x_N)} \widehat{h}_N(\xi',0) \right] \\ \notag 
&{=} \sum_{j=1}^{N-1}\sum_{\ell=1}^2 (-1)^{\ell-1}\mu 
	\mathcal{K}^2_{2,\ell,1}\left(\lambda,\zeta_0{\frac{(B+A)}{A}\frac{i\xi_j}{A}\frac{L_{1,2\ell-1}}{\det{\fb{L}}}}\right)
	(M_{\lambda^{-\frac12}}\lambda^{\frac12}{h_j},\nabla h_j) \\ \notag 
&{+} \sum_{j=1}^{N-1}\sum_{\ell=1}^2 (-1)^{\ell-1}\mu
	\mathcal{K}^2_{3,\ell,1}\left(\lambda,\zeta_1{\frac{(B+A)}{A}\frac{i\xi_j}{A}\frac{L_{1,2\ell-1}}{\det{\fb{L}}}}\right)
	(M_{\lambda^{-\frac12}}\lambda^{\frac12}{h_j},\nabla h_j) \\ \notag
&-\sum_{j=1}^{N-1}\sum_{\ell=1}^2 (-1)^{\ell-1}\mu
\mathcal{K}^2_{2,\ell,2}\left(\lambda,\zeta_0{\frac{(B+A)}{A}\frac{i\xi_j}{A}\frac{L_{2,2\ell-1}}{\det{\fb{L}}}}\right)
(M_{\lambda^{-\frac12}}\lambda^{\frac12}{h_j},\nabla h_j) \\ \notag 
&-\sum_{j=1}^{N-1}\sum_{\ell=1}^2 (-1)^{\ell-1}\mu
\mathcal{K}^2_{3,\ell,2}\left(\lambda,\zeta_1{\frac{(B+A)}{A}\frac{i\xi_j}{A}\frac{L_{2,2\ell-1}}{\det{\fb{L}}}}\right)
(M_{\lambda^{-\frac12}}\lambda^{\frac12}{h_j},\nabla h_j) \\ \notag
&+ \sum_{\ell=1}^2 (-1)^{\ell-1}\mu
\mathcal{K}^2_{2,\ell,1}\left(\lambda,\zeta_0{\frac{(B+A)}{A}\frac{L_{3,2\ell-1}}{\det{\fb{L}}}}\right)(h_N,\nabla h_N) \\ \notag
&+ \sum_{\ell=1}^2 (-1)^{\ell-1}\mu
\mathcal{K}^2_{3,\ell,1}\left(\lambda,\zeta_1{\frac{(B+A)}{A}\frac{L_{3,2\ell-1}}{\det{\fb{L}}}}\right)(h_N,\nabla h_N) \\  \notag
&- \sum_{\ell=1}^2 (-1)^{\ell-1}\mu
\mathcal{K}^2_{2,\ell,2}\left(\lambda,\zeta_0{\frac{(B+A)}{A}\frac{L_{4,2\ell-1}}{\det{\fb{L}}}}\right)(h_N,\nabla h_N) \\ \notag
&- \sum_{\ell=1}^2 (-1)^{\ell-1}\mu
\mathcal{K}^2_{3,\ell,2}\left(\lambda,\zeta_1{\frac{(B+A)}{A}\frac{L_{4,2\ell-1}}{\det{\fb{L}}}}\right)(h_N,\nabla h_N), 
\end{align}
where $\mathcal{K}_{j,\ell_1,\ell_2}^{i_1}$ and $M_{\lambda^{-1/2}}$ are defined in Lemma \ref{Lem:Rbdd} and \eqref{eq:defMltn}, respectively. 
Then we define the operators $\mathcal{S}_N(\lambda)$ and $\mathcal{T}(\lambda)$ by 
\begin{align} \label{eq:defSNP}
\begin{aligned}
\mathcal{S}_N(\lambda)(\lambda^{1/2}{\fb{h}}',h_N,\nabla{\fb{h}})=(\text{the right-hand side of \eqref{eq:rwSN}}), \\ 
\mathcal{T}(\lambda)(\lambda^{1/2}{\fb{h}}',h_N,\nabla{\fb{h}})=(\text{the right-hand side of \eqref{eq:rwP}}). 
\end{aligned}
\end{align}

The following lemma concerns the estimates of the symbols of the solution formula.  
\begin{Lem} \label{Lem:EstiSym}
Let $0<\varepsilon<\pi/2$ and $\gamma_0>0$, 
and also let $\mathbb{M}_{s,k,\varepsilon,\gamma_0}$ and $A^{-1}{\mathbb{M}}_{0,2,\varepsilon,\gamma_0}$ 
be given by \eqref{eq:DefbbM} and \eqref{eq:DefAIM}, respectively. 
\begin{itemize}
\item[a)] We have $\zeta_0(\xi') \in \mathbb{M}_{0,2,\varepsilon,0}$. (And so, $\zeta_1(\xi')/A=1-\zeta_0(\xi')\in{\mathbb{M}_{0,2,\varepsilon,0}}$.) 
\item[b)] For $j=0,1$, $\zeta_j(\xi')/\det{\fb{L}}\in \mathbb{M}_{-6,2,\varepsilon,\gamma_0}$. 
\item[c)] For $k=1,2,3,4$ and $\ell=1,2$, 
\begin{align}
L_{k,2\ell-1} \in \mathbb{M}_{5,2,\varepsilon,0}, \quad L_{k,2\ell} \in \mathbb{M}_{4,2,\varepsilon,0}. 
\end{align}
\item[d)] For $j=0,1$, $j'=1,\cdots,N-1$, $k=1,2$, $\tilde{k}=3,4$ and $\ell=1,2$, there hold 
\begin{align}
\zeta_j \frac{i\xi_{j'}}{A}\frac{L_{k,2\ell-1}}{\mu B\det{\fb{L}}}, \ \zeta_j \frac{i\xi_{j'}}{A}\frac{L_{k,2\ell}}{\mu\det{\fb{L}}}, \ 
\zeta_j \frac{L_{\tilde{k},2\ell-1}}{\mu B\det{\fb{L}}}, \ \zeta_j \frac{L_{\tilde{k},2\ell}}{\mu\det{\fb{L}}} 
\in \mathbb{M}_{-2,2,\varepsilon,\gamma_0}, \\ 
\zeta_j \frac{(B+A)}{A}\frac{i\xi_{j'}}{A}\frac{L_{k,2\ell-1}}{\det{\fb{L}}}, \ 
\zeta_j \frac{(B+A)}{A}\frac{L_{\tilde{k},2\ell-1}}{\det{\fb{L}}} 
\in A^{-1}{\mathbb{M}_{0,2,\varepsilon,\gamma_0}}. 
\end{align}
\end{itemize}
\end{Lem}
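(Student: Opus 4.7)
The plan is to handle each item in order, leaning on Lemma~\ref{Lem:EstiSymbol} and Lemma~\ref{Lem:EstidetL} for the building blocks and Lemma~\ref{Lem:MultiplierClass} for combining them.

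For (a), I will observe that $\zeta_0$ is bounded by $1$ and, for $|\alpha'|\geq1$, $\partial_{\xi'}^{\alpha'}\zeta_0$ is supported in $1\leq|\xi'|\leq2$, so $|\partial_{\xi'}^{\alpha'}\zeta_0|\leq C\leq C|\xi'|^{-|\alpha'|}$ there; together with $(\tau\partial_\tau)^\ell$ acting trivially on $\zeta_0$, this gives $\zeta_0\in\mathbb{M}_{0,2,\varepsilon,0}$ and, by $\zeta_1/A=1-\zeta_0$, also $\zeta_1/A\in\mathbb{M}_{0,2,\varepsilon,0}$. For (b), Lemma~\ref{Lem:EstidetL} supplies
\begin{align*}
|(\tau\partial_\tau)^\ell\partial_{\xi'}^{\alpha'}(\det\fb{L})^{-1}|\leq C(|\lambda|^{1/2}+A)^{-6}(1+A^{-1})A^{-|\alpha'|}.
\end{align*}
On $\operatorname{supp}\zeta_0$ we have $A\geq1$, hence $1+A^{-1}\leq2$, and on $\operatorname{supp}\zeta_1$ we have $A\leq2$, so the prefactor $\zeta_1(1+A^{-1})\leq A+1\leq3$; applying Leibniz and controlling $|\partial_{\xi'}^{\beta'}\zeta_1|\leq CA^{1-|\beta'|}$ on its support kills the residual singularity at $\xi'=0$ in both cases, giving $\zeta_j/\det\fb{L}\in\mathbb{M}_{-6,2,\varepsilon,\gamma_0}$.

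For (c), I will read off the cofactors from \eqref{eq:cofacL} term by term. The key observation is that, by \eqref{esti:multiclassAB} and Lemma~\ref{Lem:MultiplierClass}, $A\in\mathbb{M}_{1,2,\varepsilon,0}$, $B,B\pm A\in\mathbb{M}_{1,1,\varepsilon,0}$, $D_0,D_1,D_2,D_3$ have orders $2,3,3,3$ (type~$2$), $a=e^{-A\delta}$ and $1-a^2$ belong to $\mathbb{M}_{0,2,\varepsilon,0}$, while $B\mathcal{M}(\delta)=k_3(\delta)\in\mathbb{M}_{0,1,\varepsilon,0}$ so that $\mathcal{M}(\delta)\in\mathbb{M}_{-1,1,\varepsilon,0}$. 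For each entry in \eqref{eq:cofacL} one checks the total order: for example in $L_{1,1}$ the three summands have orders $1+1+3$, $4+1+0+0$ (using $A^4B^2 a\mathcal{M}(\delta)=A^4\cdot Ba\cdot B\mathcal{M}(\delta)$), and $1+1+2+3-2$, all equal to $5$; analogously every $L_{k,2\ell-1}$ term aggregates to order $5$ and every $L_{k,2\ell}$ term to order $4$. The main obstacle is that this case analysis must be done for all sixteen cofactor expressions, and one has to carefully rewrite products of $B$ with $\mathcal{M}(\delta)$ in the form $k_3(\delta)$ to avoid an artificial order blow-up.

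For (d), I will multiply the multipliers from (a)--(c) and Lemma~\ref{Lem:EstiSymbol} using Lemma~\ref{Lem:MultiplierClass}. Since $i\xi_{j'}/A\in\mathbb{M}_{0,2,\varepsilon,0}$ and $B^{-1}\in\mathbb{M}_{-1,1,\varepsilon,0}$, each of the first four expressions in (d) has total order $0+5-1-6=-2$, $0+4-6=-2$, $5-1-6=-2$, $4-6=-2$ respectively, with type $2$ dominating; so they belong to $\mathbb{M}_{-2,2,\varepsilon,\gamma_0}$. For the last two expressions, I will show membership in $A^{-1}\mathbb{M}_{0,2,\varepsilon,\gamma_0}$ by multiplying by $A$: the resulting factor $(B+A)(i\xi_{j'}/A)L_{k,2\ell-1}/\det\fb{L}$ (resp.\ $(B+A)L_{\tilde k,2\ell-1}/\det\fb{L}$) has order $1+0+5-6=0$ (resp.\ $1+5-6=0$) with type $2$, as required.
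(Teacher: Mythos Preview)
The proposal is correct and follows essentially the same approach as the paper's proof: both use the support properties of $\zeta_0$, $\zeta_1$ together with Lemma~\ref{Lem:EstidetL} and Leibniz for (b), invoke the explicit cofactor formulas \eqref{eq:cofacL} with Lemma~\ref{Lem:EstiSymbol} and Lemma~\ref{Lem:MultiplierClass} for (c), and combine (b), (c) with \eqref{esti:multiclassAB} via Lemma~\ref{Lem:MultiplierClass} for (d). Your write-up is more explicit in places---the paper dispatches (c) and (d) in one line each---but the underlying argument is identical.
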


\begin{proof}
a) For any multi-index $\alpha' \in \mathbb{N}_0^{N-1}$, 
if $|\alpha'|\geq1$, $\partial_{\xi'}^{\alpha'}\zeta_0$ is $C^\infty$ function 
whose support is in the annulus $\{1\leq|\xi'|\leq2\}$. 
As for $\alpha'=0$, $\zeta_0(\xi') \in [0,1]$ for $\xi'\in{\mathbb{R}}^{N-1}$. 
These and $(\tau{\partial}_\tau)\zeta_0(\xi')=0$ imply
\begin{align}
|(\tau{\partial}_\tau)^\ell \partial_{\xi'}^{\alpha'} \zeta_0(\xi')| \leq CA^{-|\alpha'|} 
\quad (\forall \ell=0,1, \ \alpha'\in{\mathbb{N}}_0^{N-1}). 
\end{align}
b) Note that 
\begin{align}
1+\frac{1}{A}\leq \left\{
\begin{aligned}
2,	&\quad \text{on } \text{supp}\,\zeta_0\subset \{|\xi'|\geq1\}, \\
3/A,	&\quad \text{on } \text{supp}\,\zeta_1\subset \{|\xi'|\leq2\}. 
\end{aligned}
\right.
\end{align}
Then, by the Leibniz rule, Lemma \ref{Lem:MultiplierClass}, \eqref{esti:multiclassAB}, a), and Lemma \ref{Lem:EstidetL}, we have
\begin{align*}
\left|(\tau{\partial}_\tau)^\ell \partial_{\xi'}^{\alpha'} \frac{\zeta_j(\xi')}{\det{\fb{L}}}\right| 
&\leq C \sum_{\beta'\leq\alpha'} \left|\partial_{\xi'}^{\beta'}\left(A^j\frac{\zeta_j(\xi')}{A^j}\right)\right| 
	\left| (\tau{\partial}_\tau)^\ell \partial_{\xi'}^{\alpha'-\beta'} \frac{1}{\det{\fb{L}}}\right| \\
&\leq C \fb{1}_{\text{supp}\,\zeta_j} \sum_{\beta'\leq\alpha'} A^{j-|\beta'|} (|\lambda|^{1/2}+A)^{-6}\left(1+\frac{1}{A}\right)|A|^{-(|\alpha'|-|\beta'|)} \\ 
&\leq C(|\lambda|^{1/2}+A)^{-6}A^{-|\alpha'|}, 
\end{align*}
where $\fb{1}_{\text{supp}\,\zeta_j}$ is the charactristic function. 
c) is implied by \eqref{eq:cofacL}, Lemma \ref{Lem:MultiplierClass} and Lemma \ref{Lem:EstiSymbol}, 
and d) is obtained by Lemma \ref{Lem:MultiplierClass}, b), c) and \eqref{esti:multiclassAB}. 
\end{proof}

Now we prove the assertions for $\mathcal{S}_N(\lambda)$ and $\mathcal{T}(\lambda)$. 
\begin{proof}[Proof of the assertions for $\mathcal{S}_N(\lambda)$ and $\mathcal{T}(\lambda)$ in Theorem \ref{Thm:Rbdddh}]
By Lemma \ref{Lem:Rbdd}, Lemma \ref{Lem:EstiSym} d), \eqref{eq:RbdMl} and Lemma \ref{Lem:Rbddprop}, 
$(\mathcal{S}(\lambda),\mathcal{T}(\lambda))$ satisfies the $\mathscr{R}$-boundedness properties \eqref{eq:Rbdddh}, 
which completes the proof. 
\end{proof}


It remains to construct the solution operator $\mathcal{S}_j(\lambda)$ ($1\leq j\leq N-1$) 
and to prove the $\mathscr{R}$-boundedness for it. 
We first reduce the equation \eqref{eq:PDEuj} to the case where the data are only on boundary. 
We consider the equation 
\begin{align} \label{eq:Harmw}
\lambda u_{1j} - \mu \Delta u_{1j} = \tilde{f} \text{ in } \mathbb{R}^N. 
\end{align}
Here,  
\begin{align} \label{eq:Defstf} 
\tilde{f}=-E_0\partial_j \theta, 
\end{align}
where $E_0$ is an extension operator defined by \eqref{eq:DefE0}.
Then, if we set $u_j=u_{1j}+u_{2j}$, we have 
\begin{align} \label{eq:Harmla} 
\left\{ 
\begin{array}{ll} 
\lambda u_{2j} - \mu \Delta u_{2j} = 0 & \text{ in } \Omega, \\ 
\partial_N{u}_{2j} = \tilde{h} & \text{on } \partial\Omega 
\end{array} 
\right. 
\end{align} 
with 
\begin{align} \label{eq:Defsth}
\tilde{h}=\mu^{-1}{\nu_N}{h}_j-\partial_j{u}_N-\partial_N{u}_{1j}. 
\end{align}
We obtain the $\mathscr{R}$-boundedness of the solution operator families of \eqref{eq:Harmw} 
by the following lemma, see for instance \cite[Lemma 2.6]{Sai15MMAS}. 

\begin{Lem} 
\label{Lem:RbddHarmw} 
For all $\lambda \in \mathbb{C}\setminus(-\infty,0]$, 
there exists an operator $H_1(\lambda)$ satisfying $H_1(\lambda)\in{\mathcal{L}}(L_q(\mathbb{R}^N),W^2_q(\mathbb{R}^N))$ for $1<q<\infty$ 
such that the following assertions hold: 
\begin{itemize}
\item[a)] 
For any $1<q<\infty$, $\lambda \in \mathbb{C}\setminus(-\infty,0]$ and $\tilde{f} \in L_q(\mathbb{R}^N)$, 
$u_{1j}=H_1(\lambda)\tilde{f} \in W^2_q(\mathbb{R}^N)$ is a unique solution of \eqref{eq:Harmw}. 
\item[b)]  
For any $1<q<\infty$, $0<\varepsilon<\pi/2$, $\ell=0,1$ and $1\leq m,n\leq N$, there hold 
\begin{align}
\mathscr{R}_{\mathcal{L}(L_q(\Omega))}(\{ (\tau{\partial}_\tau)^\ell \lambda H_1(\lambda) \mid \lambda\in{\Sigma_{\varepsilon,0}} \})&\leq C_{N,q,\varepsilon,\mu}, \\ 
\mathscr{R}_{\mathcal{L}(L_q(\Omega))}(\{ (\tau{\partial}_\tau)^\ell \gamma H_1(\lambda) \mid \lambda\in{\Sigma_{\varepsilon,0}} \})&\leq C_{N,q,\varepsilon,\mu}, \\ 
\mathscr{R}_{\mathcal{L}(L_q(\Omega))}(\{ (\tau{\partial}_\tau)^\ell \lambda^{1/2} \partial_m H_1(\lambda) \mid \lambda\in{\Sigma_{\varepsilon,0}} \})&\leq C_{N,q,\varepsilon,\mu}, \\ 
\mathscr{R}_{\mathcal{L}(L_q(\Omega))}(\{ (\tau{\partial}_\tau)^\ell \partial_m{\partial}_n H_1(\lambda) \mid \lambda\in{\Sigma_{\varepsilon,0}} \})&\leq C_{N,q,\varepsilon,\mu}, 
\end{align}
where $\Sigma_{\varepsilon,0}$ is given by \eqref{eq:resoset} with $\gamma_0=0$ and $\lambda=\gamma+i\tau$. 
\end{itemize}
\end{Lem}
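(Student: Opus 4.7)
The plan is to construct $H_1(\lambda)$ explicitly as a Fourier multiplier and then invoke the standard $\mathscr{R}$-boundedness criterion of Fourier multiplier type (essentially Lemma \ref{Lem:Rbddw} applied to $\mathbb{R}^N$ in place of $\mathbb{R}^{N-1}$, with a parameter-dependent family of symbols).

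First, I would define
\begin{align}
H_1(\lambda)\tilde f(x) = \mathscr{F}_\xi^{-1}\!\left[\frac{\mathscr{F}_x[\tilde f](\xi)}{\lambda+\mu|\xi|^2}\right](x),
\end{align}
which by construction is the unique $W^2_q(\mathbb{R}^N)$ solution of \eqref{eq:Harmw}: existence follows because the symbols $\lambda/(\lambda+\mu|\xi|^2)$ and $\mu|\xi|^2/(\lambda+\mu|\xi|^2)$ are bounded Fourier multipliers on $L_q$, and uniqueness follows by applying the Fourier transform to the homogeneous equation and using that $\lambda+\mu|\xi|^2\neq0$ a.e. This proves part a).

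For part b), the key pointwise estimate on the symbol is that for $\lambda\in\Sigma_{\varepsilon,0}$ there exists $c_\varepsilon>0$ such that
\begin{align}
|\lambda+\mu|\xi|^2|\geq c_\varepsilon(|\lambda|+\mu|\xi|^2)\qquad (\xi\in\mathbb{R}^N),
\end{align}
because $\lambda$ and $\mu|\xi|^2$ lie in two sectors in $\mathbb{C}$ whose bisectors differ by an angle bounded away from $\pi$. From this and the Leibniz rule, one shows by induction on $|\alpha|$ that for every multi-index $\alpha\in\mathbb{N}_0^N$ and $\ell=0,1$,
\begin{align}
\bigl|(\tau\partial_\tau)^\ell\partial_\xi^\alpha \bigl((\lambda+\mu|\xi|^2)^{-1}\bigr)\bigr|
\leq C_{\alpha,\varepsilon,\mu}(|\lambda|+|\xi|^2)^{-1}|\xi|^{-|\alpha|}.
\end{align}
Combined with the elementary bound $|\partial_\xi^\alpha(\lambda^s(i\xi)^\beta)|\leq C_\alpha|\lambda|^s|\xi|^{|\beta|-|\alpha|}$ and the inequality $|\lambda|^a|\xi|^{2b}\leq (|\lambda|+|\xi|^2)^{a+b}$ for $a,b\geq0$ with $a+b=1$, each of the symbols
\begin{align}
\frac{\lambda}{\lambda+\mu|\xi|^2},\quad \frac{\gamma}{\lambda+\mu|\xi|^2},\quad \frac{\lambda^{1/2}(i\xi_m)}{\lambda+\mu|\xi|^2},\quad \frac{(i\xi_m)(i\xi_n)}{\lambda+\mu|\xi|^2}
\end{align}
satisfies a Mikhlin estimate $|\partial_\xi^\alpha m_\lambda(\xi)|\leq C_\alpha|\xi|^{-|\alpha|}$ uniformly in $\lambda\in\Sigma_{\varepsilon,0}$, and likewise after applying $(\tau\partial_\tau)^\ell$.

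Finally, I would apply Lemma \ref{Lem:Rbddw} (with $d=N$) to each of these parameter-dependent families of symbols; this directly yields the four $\mathscr{R}$-boundedness estimates claimed in b). Since the whole argument consists of verifying pointwise symbol bounds, I do not expect a genuine obstacle: the only place requiring care is the angle estimate on $|\lambda+\mu|\xi|^2|$, which is routine for $\lambda\in\Sigma_{\varepsilon,0}$ because $\arg(\lambda)\in[-(\pi-\varepsilon),\pi-\varepsilon]$ keeps $\lambda$ at a fixed angular distance from the nonpositive real half-line in which $-\mu|\xi|^2$ lives.
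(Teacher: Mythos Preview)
Your proof is correct and follows the standard Fourier-multiplier approach: define $H_1(\lambda)$ via the symbol $(\lambda+\mu|\xi|^2)^{-1}$, use the sectorial lower bound $|\lambda+\mu|\xi|^2|\geq c_\varepsilon(|\lambda|+\mu|\xi|^2)$ to verify uniform Mikhlin estimates for the four symbols (and their $\tau\partial_\tau$-derivatives), and conclude via Lemma~\ref{Lem:Rbddw} with $d=N$. The paper does not give its own proof of this lemma but simply cites \cite[Lemma~2.6]{Sai15MMAS}, where exactly this argument is carried out; the only cosmetic point you might add is that the $\mathscr{R}$-bounds on $\mathcal{L}(L_q(\mathbb{R}^N))$ transfer to $\mathcal{L}(L_q(\Omega))$ by composing with zero-extension and restriction, which are fixed bounded operators and hence $\mathscr{R}$-bounded by Lemma~\ref{Lem:Rbddprop}.
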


As for \eqref{eq:Harmla}, we show the following lemma 
to get the $\mathscr{R}$-boundedness of the solution operator families. 
\begin{Lem} \label{Lem:RbddHarmla}
For all $\lambda \in \mathbb{C}\setminus(-\infty,0]$, there exists an operator $H_2(\lambda)$ 
(precisely, it is given by \eqref{eq:uj2solform}) 
satisfying $H_2(\lambda)\in{\mathcal{L}}(L_q(\Omega)^{N+1}, W^2_q(\Omega))$ for $1<q<\infty$ 
such that the following assertions hold: 
\begin{itemize}
\item[a)] 
For any $1<q<\infty$, $\lambda \in \mathbb{C}\setminus(-\infty,0]$ and $\tilde{h} \in W^1_q(\Omega)$, 
$u_{2j}=H_2(\lambda)(\lambda^{1/2}\tilde{h}, \nabla \tilde{h}) \in W^2_q(\Omega)$ 
is a unique solution of \eqref{eq:Harmla}. 
\item[b)]  
For any $1<q<\infty$, $0<\varepsilon<\pi/2$, $\gamma_0>0$, $\ell=0,1$ and $1\leq m,n\leq N$, there hold 
\begin{align}
\mathscr{R}_{\mathcal{L}(L_q(\Omega))}(\{ (\tau{\partial}_\tau)^\ell \lambda H_2(\lambda) \mid \lambda\in{\Sigma_{\varepsilon,\gamma_0}} \})&\leq C_{N,q,\varepsilon,\gamma_0,\mu,\delta}, \\ 
\mathscr{R}_{\mathcal{L}(L_q(\Omega))}(\{ (\tau{\partial}_\tau)^\ell \gamma H_2(\lambda) \mid \lambda\in{\Sigma_{\varepsilon,\gamma_0}} \})&\leq C_{N,q,\varepsilon,\gamma_0,\mu,\delta}, \\ 
\mathscr{R}_{\mathcal{L}(L_q(\Omega))}(\{ (\tau{\partial}_\tau)^\ell \lambda^{1/2} \partial_m H_2(\lambda) \mid \lambda\in{\Sigma_{\varepsilon,\gamma_0}} \})&\leq C_{N,q,\varepsilon,\gamma_0,\mu,\delta}, \\ 
\mathscr{R}_{\mathcal{L}(L_q(\Omega))}(\{ (\tau{\partial}_\tau)^\ell \partial_m{\partial}_n H_2(\lambda) \mid \lambda\in{\Sigma_{\varepsilon,\gamma_0}} \})&\leq C_{N,q,\varepsilon,\gamma_0,\mu,\delta}, 
\end{align}
where $\Sigma_{\varepsilon,\gamma_0}$ is given by \eqref{eq:resoset} and $\lambda=\gamma+i\tau$. 
\end{itemize}
\end{Lem}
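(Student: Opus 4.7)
My plan is to mimic the strategy used for the Stokes resolvent problem: apply the partial Fourier transform $\mathscr{F}_{x'}$ to \eqref{eq:Harmla}, solve the resulting scalar ODE in $x_N$ explicitly, verify that the Fourier symbol belongs to an appropriate multiplier class, and then invoke Lemma \ref{Lem:Rbdd} to get the $\mathscr{R}$-boundedness. Since \eqref{eq:Harmla} is a single scalar Neumann problem, the analysis is far simpler than that carried out for the full Stokes system; in particular, the $4\times 4$ boundary matrix $\fb{L}$ is replaced by a $2\times 2$ system with a single denominator.

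After Fourier transforming, \eqref{eq:Harmla} becomes $(B^2-\partial_N^2)\widehat{u}_{2j}=0$ on $(0,\delta)$ subject to $\partial_N\widehat{u}_{2j}(\xi',0)=\widehat{\tilde h}(\xi',0)$ and $\partial_N\widehat{u}_{2j}(\xi',\delta)=\widehat{\tilde h}(\xi',\delta)$, with $B$ as in \eqref{eq:DefAB}. Writing the general solution as $\widehat{u}_{2j}(\xi',x_N)=c_1e^{-Bd_1(x_N)}+c_2e^{-Bd_2(x_N)}$ with $d_\ell$ as in \eqref{eq:Defdl} and inserting into the boundary conditions gives a $2\times 2$ linear system for $(c_1,c_2)$ whose determinant equals $-B^2(1-e^{-2B\delta})$. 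Cramer's rule then yields the explicit formula
\begin{align*}
\widehat{u}_{2j}(\xi',x_N) &= \frac{1}{B(1-e^{-2B\delta})}\Bigl\{\bigl(\widehat{\tilde h}(\xi',\delta)-e^{-B\delta}\widehat{\tilde h}(\xi',0)\bigr)e^{-Bd_1(x_N)} \\
&\quad + \bigl(e^{-B\delta}\widehat{\tilde h}(\xi',\delta)-\widehat{\tilde h}(\xi',0)\bigr)e^{-Bd_2(x_N)}\Bigr\},
\end{align*}
and $H_2(\lambda)$ is defined by taking the inverse Fourier transform of the right-hand side. Uniqueness is immediate, since $\text{Re}\,B>0$ on $(\mathbb{C}\setminus(-\infty,0])\times(\mathbb{R}^{N-1}\setminus\{0\})$ forces $|e^{-2B\delta}|<1$ and hence the boundary matrix is invertible.

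The crux of the argument, and the point where the hypothesis $\gamma_0>0$ enters, is to verify that $m_1(\lambda,\xi'):=(B^2(1-e^{-2B\delta}))^{-1}\in\mathbb{M}_{-2,1,\varepsilon,\gamma_0}$. By \eqref{esti:equivB} one has $|e^{-2B\delta}|\leq e^{-2c_{\varepsilon,\mu}\gamma_0^{1/2}\delta}<1$ uniformly on $\Sigma_{\varepsilon,\gamma_0}\times(\mathbb{R}^{N-1}\setminus\{0\})$, yielding the lower bound $|1-e^{-2B\delta}|\geq 1-e^{-2c_{\varepsilon,\mu}\gamma_0^{1/2}\delta}>0$; the derivatives of $1-e^{-2B\delta}$ are controlled using \eqref{esti:multiclasski} with $x_N=2\delta$, which shows $e^{-2B\delta}\in\mathbb{M}_{0,1,\varepsilon,0}$. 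Combining these two bounds with Lemma \ref{Lem:InvMultiClass} places $(1-e^{-2B\delta})^{-1}$ in $\mathbb{M}_{0,1,\varepsilon,\gamma_0}$, and since $B^{-2}\in\mathbb{M}_{-2,1,\varepsilon,0}$ by \eqref{esti:multiclassAB}, Lemma \ref{Lem:MultiplierClass} gives the desired $m_1\in\mathbb{M}_{-2,1,\varepsilon,\gamma_0}$; the extra $e^{-B\delta}$ factors appearing in two of the four summands remain in the same class by the same argument.

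Each summand in the solution formula thus matches the template of the operator $\mathcal{K}^1_{1,\ell_1,\ell_2}(\lambda,m_1)$ of Lemma \ref{Lem:Rbdd} for appropriate $(\ell_1,\ell_2)\in\{1,2\}^2$. Defining $H_2(\lambda)$ as the sum of the corresponding four operators gives a bounded linear map from $L_q(\Omega)^{N+1}$ into $W^2_q(\Omega)$, and the estimates \eqref{eq:RbddLjU} combined with Lemma \ref{Lem:Rbddprop} deliver the four $\mathscr{R}$-boundedness claims in b). The main obstacle, as stressed above, is the uniform invertibility of the boundary matrix through control of $|1-e^{-2B\delta}|$: the condition $\gamma_0>0$ is precisely what prevents the simultaneous degeneration $\lambda\to 0$, $\xi'\to 0$ at which $B\to 0$ and the denominator collapses.
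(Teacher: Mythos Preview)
Your proposal is correct and follows essentially the same approach as the paper: apply the partial Fourier transform, solve the resulting $2\times 2$ boundary system to obtain the explicit formula with denominator $B(1-e^{-2B\delta})$, express the solution as a sum of four $\mathcal{K}^1_{1,\ell_1,\ell_2}$ operators, and verify $(B^2(1-e^{-2B\delta}))^{-1}\in\mathbb{M}_{-2,1,\varepsilon,\gamma_0}$ via the uniform lower bound $|1-e^{-2B\delta}|\geq 1-e^{-2c_{\varepsilon,\mu}\gamma_0^{1/2}\delta}$ together with Lemma~\ref{Lem:InvMultiClass}. Your identification of the role of $\gamma_0>0$ (preventing the degeneration $B\to 0$) matches the paper's reasoning exactly.
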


\begin{proof}
Applying the partial Fourier transform with respect to $x'$ to \eqref{eq:Harmla} implies
\begin{align}
\left\{
\begin{array}{ll}
(B^2-\partial_N^2)\widehat{u}_{j2}(\xi',x_N)=0 & 0<x_N<\delta, \\
\partial_N \widehat{u}_{j2}(\xi',x_N)=\mathscr{F}_{x'}(\xi',x_N) & x_N\in\{0,\delta\}. 
\end{array}
\right.
\end{align}
Thus, we set 
\begin{align}
\widehat{u}_{j2}(\xi',x_N)=\sum_{\ell=1}^2 \beta_\ell e^{-Bd_\ell(x_N)}
\end{align}
with some coefficients $\beta_1$ and $\beta_2$ which depend on $(\lambda,\xi')$ and obey
\begin{align} 
B
\begin{bmatrix}
1&-e^{-B\delta }\\ e^{-B\delta }&-1
\end{bmatrix}
\begin{bmatrix}
\beta_1 \\ \beta_2
\end{bmatrix}
=
\begin{bmatrix}
\mathscr{F}_{x'}(\xi',\delta) \\ \mathscr{F}_{x'}(\xi',0) 
\end{bmatrix}. 
\end{align}
By solving this, we obtain the solution formula for \eqref{eq:Harmla}: 
\begin{align} \label{eq:uj2sf}
\begin{aligned}
u_{2j}(x) 
&= \mathscr{F}_{\xi'}^{-1}\bigg[ \frac{1}{B^2(1-e^{-2 B\delta{}})}\big\{ 
(e^{-Bd_1(x_N)}+e^{-B\delta } e^{-Bd_2(x_N)})B{\mathscr{F}_{x'}}(\xi',\delta) \\ 
&\hspace{15mm} - (e^{-B\delta } e^{-Bd_1(x_N)}+e^{-Bd_2(x_N)})B{\mathscr{F}_{x'}}(\xi',0) \big\}\bigg](x') \\ 
&= \mathcal{K}^1_{1,1,1}\left(\lambda,\frac{1}{B^2(1-e^{-2 B\delta{}})}\right)(\lambda^{\frac12}\tilde h,\nabla \tilde h)
+\mathcal{K}^1_{1,2,1}\left(\lambda,\frac{e^{-B\delta }}{B^2(1-e^{-2 B\delta{}})}\right)(\lambda^{\frac12}\tilde h,\nabla \tilde h) \\ 
&-\mathcal{K}^1_{1,1,2}\left(\lambda,\frac{e^{-B\delta }}{B^2(1-e^{-2 B\delta{}})}\right)(\lambda^{\frac12}\tilde h,\nabla \tilde h)
-\mathcal{K}^1_{1,2,2}\left(\lambda,\frac{1}{B^2(1-e^{-2 B\delta{}})}\right)(\lambda^{\frac12}\tilde h,\nabla \tilde h). 
\end{aligned}
\end{align}
We thus define the operator $H_2(\lambda)$ by 
\begin{align} \label{eq:uj2solform}
H_2(\lambda)(\lambda^{\frac12}\tilde h,\nabla \tilde h)=(\text{the right-hand side of \eqref{eq:uj2sf}}). 
\end{align}
Since \eqref{esti:equivB} and \eqref{esti:multiclasski} 
imply the assumptions \eqref{eq:LemIMC0} and \eqref{eq:LemIMC1} of Lemma \ref{Lem:InvMultiClass} 
with $m(\lambda,\xi')=1-e^{-2 B\delta{}}$ and $f\equiv 1-e^{-2c\gamma_0^{1/2}\delta}$ (constant function): 
\begin{align}
|1-e^{-2 B\delta{}}|\geq 1-e^{-2c\gamma_0^{1/2}\delta}, \quad 1-e^{-2 B\delta{}} \in \mathbb{M}_{0,1,\varepsilon,0}, 
\end{align}
we get $(1-e^{-2 B\delta{}})^{-1} \in \mathbb{M}_{0,1,\varepsilon,\gamma_0}$. 
And so, we have 
\begin{align}
\frac{1}{B^2(1-e^{-2 B\delta{}})}, \ \frac{e^{-B\delta }}{B^2(1-e^{-2 B\delta{}})} \in \mathbb{M}_{-2,1,\varepsilon,\gamma_0} 
\end{align}
from Lemma \ref{Lem:MultiplierClass} and Lemma \ref{Lem:EstiSymbol}. 
Then Lemma \ref{Lem:Rbdd} and Lemma \ref{Lem:Rbddprop} ii) imply the desired conclusion. 
\end{proof}

Let us close the paper with completion of proof of Theorem \ref{Thm:Rbdddh}. 

\begin{proof}[Proof of the remaining assertions of Theorem \ref{Thm:Rbdddh}]
In view of the arguments above, we define $\mathcal{S}_j(\lambda)$ by 
\begin{align} \label{eq:defSj}
\begin{aligned}
&\mathcal{S}_j(\lambda)(\lambda^{1/2}\fb{h}', h_N, \nabla \fb{h}) \\
&=H_1(\lambda)Q_0(\lambda)(\lambda^{1/2}\fb{h}', h_N, \nabla \fb{h}) 
+H_2(\lambda)\big(Q_1(\lambda), Q_2(\lambda)\big)(\lambda^{1/2}\fb{h}', h_N, \nabla \fb{h}), 
\end{aligned}
\end{align}
where we have set 
\begin{align*}
Q_0(\lambda)(\lambda^{1/2}\fb{h}', h_N, \nabla \fb{h})
&=\tilde{f} \\ 
&=-E_0\partial_j{\mathcal{T}}(\lambda)(\lambda^{1/2}\fb{h}', h_N, \nabla \fb{h}), \\ 
Q_1(\lambda)(\lambda^{1/2}\fb{h}', h_N, \nabla \fb{h})
&=\lambda^{1/2}\tilde{h} \\ 
&=\mu^{-1}{\tilde{\nu}_N}\lambda^\frac{1}{2}{h}_j
-\lambda^\frac{1}{2}{\partial}_j{\mathcal{S}}_N(\lambda)(\lambda^{1/2}\fb{h}', h_N, \nabla \fb{h}) \\
&-\lambda^\frac{1}{2}{\partial}_NH_1(\lambda)Q_0(\lambda)(\lambda^{1/2}\fb{h}', h_N, \nabla \fb{h}), \\ 
Q_2(\lambda)(\lambda^{1/2}\fb{h}', h_N, \nabla \fb{h})
&=\nabla\tilde{h} \\ 
&=\mu^{-1}(\nabla{h}_j)\tilde{\nu}_N
+\mu^{-1}M_{\lambda^{-1/2}}\lambda^{1/2}{h}_j\nabla{\tilde{\nu}_N} \\ 
&-\partial_j\nabla{\mathcal{S}}_N(\lambda)(\lambda^{1/2}\fb{h}', h_N, \nabla \fb{h}) \\ 
&-\partial_N\nabla H_1(\lambda)Q_0(\lambda)(\lambda^{1/2}\fb{h}', h_N, \nabla \fb{h}). 
\end{align*}
Here, $\tilde{f}$ and $\tilde{h}$ are given by \eqref{eq:Defstf} and \eqref{eq:Defsth}, respectively, 
$E_0$ is an extension operator defined by \eqref{eq:DefE0}, 
$\tilde{\nu}_N$ is the $N$-th component of $\tilde{\nu}$, 
and $M_{\lambda^{-1/2}}$ and $\tilde{\nu}$ are defined by \eqref{eq:defMltn}. 
We have 
\begin{align} \label{eq:RbdQi}
\mathscr{R}_{\mathcal{L}(L_q(\Omega))}(\{ Q_i(\lambda) \mid \lambda\in{\Sigma_{\varepsilon,\gamma_0}} \})
\leq C_{N,q,\varepsilon,\gamma_0,\mu,\delta}
\end{align}
for $i=0,1,2$ from Lemma \ref{Lem:Rbddprop}, Lemma \ref{Lem:RbddHarmw}, \eqref{eq:RbdMl} 
and the fact that $\mathcal{S}_N(\lambda)$ and $\mathcal{T}(\lambda)$ satisfy \eqref{eq:Rbdddh}.  
Then $\mathcal{S}_j(\lambda)$ satisfies \eqref{eq:Rbdddh} 
by Lemma \ref{Lem:Rbddprop}, Lemma \ref{Lem:RbddHarmw}, Lemma \ref{Lem:RbddHarmla} and \eqref{eq:RbdQi}. 
Since $(\mathcal{S}(\lambda),\mathcal{T}(\lambda))$ satisfies a) in Theorem \ref{Thm:Rbdddh}, 
by summing up the aforementioned arguments, Theorem \ref{Thm:Rbdddh} follows. 
\end{proof}

\fb{Acknowledgement.}
I am very grateful to Professor Yoshihiro Shibata for bringing the problem studied in this paper to my attention
and for many useful suggestions. 
I also thank Professor Hirokazu Saito for many stimulating discussions.

\end{document}